\documentclass[leqno,12pt]{amsart}
\usepackage{amssymb}
\usepackage{amsmath}
\usepackage{enumerate}
\usepackage{amsfonts}
\usepackage{hyperref}
\usepackage{soulutf8}
\usepackage{tikz}

\textheight685pt
\textwidth450pt
\hoffset-30pt
\voffset-50pt

\hypersetup{
    colorlinks=true,
    linkcolor= blue,
    citecolor =cyan,
    urlcolor = teal,
}

\newtheorem{theorem}{Theorem}[section]
\newtheorem{corollary}[theorem]{Corollary}
\newtheorem{lemma}[theorem]{Lemma}
\newtheorem{proposition}[theorem]{Proposition}

\numberwithin{equation}{section}

\begin{document}
\title[Littlewood-Paley square functions in the Dunkl setting]{Upper and lower bounds for Littlewood-Paley square functions in the Dunkl setting
\\
\normalfont{\scriptsize{T\MakeLowercase{he present version of the paper extends results of the previous submission and concerns} L\MakeLowercase{ittlewood}--P\MakeLowercase{aley square functions in the} D\MakeLowercase{unkl setting associated with kernels satisfying mild regularity in smoothness and decay.}}}}

\author[ J. Dziuba\'nski and A. Hejna]{Jacek Dziuba\'nski and Agnieszka Hejna}

\subjclass[2010]{{primary:  42B25, 42B20; secondary 42B15, 47G10, 47G40}}
\keywords{Dunkl operators, Littlewood-Paley square functions, Calder\'on-Zygmund operators}

\begin{abstract}
The aim of this paper  is to prove upper and lower $L^p$ estimates, $1<p<\infty$, for Littlewood-Paley square functions in the rational Dunkl setting.
\end{abstract}

\address{J. Dziuba\'nski and A. Hejna, Uniwersytet Wroc\l awski,
Instytut Matematyczny,
Pl. Grunwaldzki 2/4,
50-384 Wroc\l aw,
Poland}
\email{jdziuban@math.uni.wroc.pl}
\email{hejna@math.uni.wroc.pl}

\thanks{
Research supported by the National Science Centre, Poland (Narodowe Centrum Nauki), Grant 2017/25/B/ST1/00599.}

\maketitle

\section{Introduction and statements of results}

On $\mathbb R^N$ equipped with normalized root system $R$ and a multiplicity function $k\geq 0$, let $\nabla f(\mathbf x)=(\partial_1f(\mathbf x),\partial_2f(\mathbf x),...,\partial_N f(\mathbf x))$, $\nabla_k f(\mathbf  x)=(T_1 f(\mathbf x),T_2f(\mathbf x),...,T_Nf(\mathbf x))$, and $\Delta_k f(\mathbf x)=\sum_{j=1}^\infty T_j^2f(\mathbf x)$ denote the classical gradient, the Dunkl gradient, and the Dunkl Laplacian respectively, where $T_j$ are the Dunkl operators (see Section \ref{sec:preliminaries}).  For two reasonable functions $f,g$ on $\mathbb R^N$, let $f*g$ stands for the Dunkl convolution.
Let
$$ f_t(\mathbf x)=t^{-\mathbf N}f(\mathbf x/t),$$ where $\mathbf N$ is the homogeneous dimension of the system $(\mathbb R^N,R,k)$ (see Section \ref{sec:preliminaries}).

Assume that
$\phi$ and  $ \psi$ are functions defined on $\mathbb R^N$ which satisfy certain smoothness and decay conditions (see Theorem \ref{teo:main1} and Corollary \ref{coro:S_t}). Assume additionally that $\int_{\mathbb{R}^N} \psi\, dw=0$, where $dw$ is the associated measure \eqref{measure_w}. We define  the following square functions:

$$ S_{\nabla_k,\phi}f(\mathbf x)=\Big(\int_0^\infty {|t\nabla_k (\phi_t*f)(\mathbf x)|^2} \frac{dt}{t}\Big)^{1/2},$$
$$ S_{\nabla,\phi}f(\mathbf x)=\Big(\int_0^\infty  {|t\nabla (\phi_t*f)(\mathbf x)|^2} \frac{dt}{t}\Big)^{1/2},$$
$$ S_\psi f(\mathbf x)= \Big(\int_0^\infty |\psi_t*f(\mathbf x)|^2\frac{dt}{t}\Big)^{1/2},$$
$$S_{\nabla_t,\phi}f(\mathbf x)=\Big(\int_0^\infty  {\Big|t \frac{d}{dt} (\phi_t*f)(\mathbf x)\Big|^2} \frac{dt}{t}\Big)^{1/2}.  $$
For $f,g\in C^2(\mathbb R^N)$ we consider the carr\'e du champ operator
\begin{equation}\label{form_Gamma} \Gamma (f,g)=\frac{1}{2}\Big(\Delta_k (f \bar g)-f\Delta_k \bar g-\bar g\Delta_k f\Big)
\end{equation}
and the associated square function
\begin{equation}\label{g_carre} \mathfrak{g}_{\Gamma,\phi}(f)(\mathbf x)=\Big(\int_0^\infty t^2\Gamma(\phi_t*f,\phi_t*f)(\mathbf x)\frac{dt}{t}\Big)^{1/2}.
\end{equation}

Let us note that $\mathfrak g_{\Gamma,\phi}$ is well--defined, since $\Gamma (f,f)(\mathbf x)\geq 0$ (see \eqref{form_Gamma1}).

We are now in a position to state our results.
 \begin{theorem}\label{teo:main1}
 {Let $s$ be a positive integer such that $2s>\mathbf N+1$.
 Assume that $\phi$, $\psi$ are $C^{2s}(\mathbb{R}^N)$ functions (not necessary radial) such that
 \begin{equation}\label{eq:main_assum} |\partial^{\beta}\phi(\mathbf x)|+|\partial^{\beta}\psi(\mathbf x)|\leq C(1+\| \mathbf x\|)^{-M-\mathbf N}\quad \text{for } \beta\in \mathbb N_0^N, \ |\beta|\leq 2s,
 \end{equation}
 for certain $M>\lfloor \mathbf N \rfloor +1$.}
 Assume also  that $\int_{\mathbb{R}^N} \psi \, dw=0$. Then, for every $1<p<\infty$, there is a constant $C_p>0$ such that for all $f \in L^p(dw)$ we have
 \begin{equation}
     \|S_{\nabla_k,\phi}f\|_{L^p(dw)} + \|S_{\nabla,\phi}f\|_{L^p(dw)} + \|S_{\psi}f\|_{L^p(dw)}+ \| \mathfrak g_{\Gamma, \phi} (f)\|_{L^p(dw)} \leq C_p\| f\|_{L^p(dw)}.
 \end{equation}
  \end{theorem}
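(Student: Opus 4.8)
The plan is to view each of the four square functions as (the norm of) a vector-valued singular integral operator on the space of homogeneous type $(\mathbb{R}^N,\|\cdot\|,dw)$, with values in the Hilbert space $\mathcal{H}=L^2\bigl((0,\infty),\tfrac{dt}{t}\bigr)$ (or $\mathcal{H}\otimes\mathbb{C}^N$), and to verify the two ingredients of vector-valued Calder\'on--Zygmund theory: $L^2(dw)$-boundedness and the standard kernel estimates. I would begin with two reductions. Since the Dunkl operators are homogeneous of degree $-1$, one has $t\,T_j(\phi_t*f)=(T_j\phi)_t*f$, and likewise $t\frac{d}{dt}(\phi_t*f)=-(\mathrm{E}\phi)_t*f$ with $\mathrm{E}\phi(\mathbf{x})=\mathbf{N}\phi(\mathbf{x})+\langle\mathbf{x},\nabla\phi(\mathbf{x})\rangle$. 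Integration by parts against $dw$ (using $T_j1=0$ and $\langle\mathbf{x},\nabla w(\mathbf{x})\rangle=(\mathbf{N}-N)w(\mathbf{x})$) shows that every component of $\nabla_k\phi$, as well as $\mathrm{E}\phi$, has vanishing $dw$-integral, and one checks these inherit, with one fewer derivative, decay of the type \eqref{eq:main_assum}; hence $S_{\nabla_k,\phi}f=\bigl(\sum_j S_{T_j\phi}f^{\,2}\bigr)^{1/2}$, so $S_{\nabla_k,\phi}$ reduces to finitely many square functions of the form $S_\psi$. Secondly, a direct computation from \eqref{form_Gamma} gives the pointwise identity
\[
\Gamma(g,g)(\mathbf{x})=|\nabla g(\mathbf{x})|^2+\sum_{\alpha\in R}c_\alpha\,\frac{|g(\mathbf{x})-g(\sigma_\alpha\mathbf{x})|^2}{\langle\alpha,\mathbf{x}\rangle^2},\qquad c_\alpha>0,
\]
where $\sigma_\alpha$ is the reflection in the hyperplane $\alpha^{\perp}$, so $\mathfrak{g}_{\Gamma,\phi}(f)^2=S_{\nabla,\phi}f^{\,2}+\sum_\alpha c_\alpha D_\alpha f^{\,2}$ with $D_\alpha f(\mathbf{x})=\bigl(\int_0^\infty t^2\,|(\phi_t*f)(\mathbf{x})-(\phi_t*f)(\sigma_\alpha\mathbf{x})|^2\langle\alpha,\mathbf{x}\rangle^{-2}\,\tfrac{dt}{t}\bigr)^{1/2}$; it therefore suffices to bound $S_{\nabla,\phi}f$ and each $D_\alpha f$ in $L^p(dw)$.

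For the $L^2$ estimates I would pass to the Dunkl transform, using $\widehat{T_jg}(\xi)=i\xi_j\widehat{g}(\xi)$, $\widehat{\phi_t*f}(\xi)=\widehat{\phi}(t\xi)\widehat{f}(\xi)$ and the Plancherel formula. For $S_\psi$ this reduces to $\sup_{\|\eta\|=1}\int_0^\infty|\widehat{\psi}(r\eta)|^2\,\tfrac{dr}{r}<\infty$: the small-$r$ part is controlled by $|\widehat{\psi}(\xi)|\lesssim\|\xi\|$, which follows from $\int\psi\,dw=0$ and $\int\|\mathbf{x}\|\,|\psi|\,dw<\infty$ (valid since $M>1$), and the large-$r$ part by $\|\xi\|^{2s}|\widehat{\psi}(\xi)|\le\|\Delta_k^{s}\psi\|_{L^1(dw)}<\infty$, which holds because $\psi\in C^{2s}(\mathbb{R}^N)$ and because the difference parts of $\Delta_k^{s}$ preserve the integrable decay in \eqref{eq:main_assum}. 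For $S_{\nabla_k,\phi}$ the same computation applies, with the extra factor $t^2\|\xi\|^2$ ensuring convergence near the origin in place of the vanishing of $\widehat{\psi}$. Finally, Green's formula for $\Delta_k$ gives $\int\Gamma(g,g)\,dw=\|\nabla_k g\|_{L^2(dw)}^2$, whence $\|\mathfrak{g}_{\Gamma,\phi}f\|_{L^2(dw)}=\|S_{\nabla_k,\phi}f\|_{L^2(dw)}$; since $S_{\nabla,\phi}f$ and the $D_\alpha f$ are pointwise dominated by $\mathfrak{g}_{\Gamma,\phi}f$, they are likewise $O(\|f\|_{L^2(dw)})$.

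The remaining, and principal, task is the kernel estimates, and this is where I expect the main difficulty to lie. The operator behind $S_\psi$ has $\mathcal{H}$-valued kernel $K(\mathbf{x},\mathbf{y})=\bigl(t\mapsto\tau_{\mathbf{x}}\psi_t(-\mathbf{y})\bigr)$, $\tau_{\mathbf{x}}$ the Dunkl translation; for $S_{\nabla,\phi}$ and $D_\alpha$ the kernels are built from $\partial_{j}^{\mathbf{x}}\bigl(\tau_{\mathbf{x}}\phi_t(-\mathbf{y})\bigr)$ and $(\tau_{\mathbf{x}}\phi_t(-\mathbf{y})-\tau_{\sigma_\alpha\mathbf{x}}\phi_t(-\mathbf{y}))/\langle\alpha,\mathbf{x}\rangle$, which through $\partial_{j}^{\mathbf{x}}\tau_{\mathbf{x}}\Phi=\tau_{\mathbf{x}}(T_j\Phi)-\sum_\alpha\tfrac{k(\alpha)}{2}\langle\alpha,e_j\rangle\,\frac{\tau_{\mathbf{x}}\Phi-\tau_{\sigma_\alpha\mathbf{x}}\Phi}{\langle\alpha,\mathbf{x}\rangle}$ reduce to the same kinds of objects. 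The analytic input is the known pointwise and Lipschitz-in-$\mathbf{x}$ majorizations of Dunkl translations of functions obeying \eqref{eq:main_assum}, schematically
\[
|\tau_{\mathbf{x}}\Phi_t(-\mathbf{y})|\lesssim\frac{1}{w(B(\mathbf{x},t))}\Bigl(1+\tfrac{\|\mathbf{x}-\mathbf{y}\|}{t}\Bigr)^{-a}\Bigl(1+\tfrac{d(\mathbf{x},\mathbf{y})}{t}\Bigr)^{-b},\qquad |\tau_{\mathbf{x}}\Phi_t(-\mathbf{y})-\tau_{\mathbf{x}'}\Phi_t(-\mathbf{y})|\lesssim\tfrac{\|\mathbf{x}-\mathbf{x}'\|}{t}\,(\cdots),
\]
where $d(\mathbf{x},\mathbf{y})=\min_{\sigma\in G}\|\mathbf{x}-\sigma\mathbf{y}\|$ ($G$ the reflection group) and $a,b$ are as large as the hypotheses on $M$ and $2s$ permit. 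Inserting these, and integrating $\int_0^\infty t^{2\varepsilon}|\,\cdot\,|^2\,\tfrac{dt}{t}$ with a split at $t\sim d(\mathbf{x},\mathbf{y})$, yields the size estimate $\|K(\mathbf{x},\mathbf{y})\|_{\mathcal{H}}\lesssim w(B(\mathbf{x},d(\mathbf{x},\mathbf{y})))^{-1}$ and, using the spare derivative, the H\"older estimate $\|K(\mathbf{x},\mathbf{y})-K(\mathbf{x},\mathbf{y}')\|_{\mathcal{H}}\lesssim w(B(\mathbf{x},\|\mathbf{x}-\mathbf{y}\|))^{-1}(\|\mathbf{y}-\mathbf{y}'\|/\|\mathbf{x}-\mathbf{y}\|)^{\gamma}$ for some $\gamma>0$ when $\|\mathbf{y}-\mathbf{y}'\|\le\tfrac12\|\mathbf{x}-\mathbf{y}\|$ (and symmetrically in the first variable); the hypotheses $2s>\mathbf{N}+1$ and $M>\lfloor\mathbf{N}\rfloor+1$ are exactly what the required translation bounds and the convergence of these $t$-integrals demand. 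With the $L^2$-bound of the previous paragraph, the Dunkl Calder\'on--Zygmund calculus for operators whose kernels satisfy such $d$-size and Euclidean-H\"older bounds on $(\mathbb{R}^N,\|\cdot\|,dw)$ then gives weak type $(1,1)$ and $L^p(dw)$-boundedness for $1<p<\infty$ (the adjoints being of the same type since $T_j^{*}=-T_j$ and Dunkl convolution is symmetric); together with the first paragraph this proves the asserted inequality. The genuinely delicate point throughout is establishing these kernel estimates: for non-radial $\phi,\psi$ the translation $\tau_{\mathbf{x}}$ has no explicit formula and is not positivity-preserving, so one must work with sharp majorizations of $\tau_{\mathbf{x}}\Phi$ and of its $\mathbf{x}$- and $\mathbf{y}$-variations while carefully tracking the derivative budget, and the $S_{\nabla,\phi}$ and $\mathfrak{g}_{\Gamma,\phi}$ pieces, whose kernels involve ordinary derivatives and wall-singular difference quotients of the translation kernel, are the most technical.
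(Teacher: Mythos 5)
Your overall architecture coincides with the paper's: decompose $\mathfrak g_{\Gamma,\phi}$ via \eqref{form_Gamma1} into $S_{\nabla,\phi}$ plus difference-quotient square functions, get $L^2(dw)$ bounds from the Plancherel identity for the Dunkl transform, and then run a vector-valued Calder\'on--Zygmund argument with $\mathcal H_2=L^2((0,\infty),\frac{dt}{t})$. But there is a genuine gap at exactly the point you flag as ``the principal task'': you invoke, as known input, pointwise and Lipschitz majorizations of $\tau_{\mathbf x}\Phi_t(-\mathbf y)$ for general non-radial $\Phi$ satisfying \eqref{eq:main_assum}. No such estimates are available off the shelf --- the Dunkl translation of a non-radial function with finite smoothness has no usable explicit representation, and proving these bounds is the main technical content of the paper (Sections 4--5). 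The mechanism there is to write $\phi=c_k^{-1}\bigl((I-\Delta_k)^s\phi\bigr)*J^{\{s\}}*J^{\{s\}}$, express the Bessel kernel $J^{\{s\}}$ by subordination \eqref{eq:subordination} to the heat kernel, and transfer the sharp heat-kernel bounds of Theorem~\ref{teo:heat} to $\phi_t(\mathbf x,\mathbf y)$, $\partial_{j,\mathbf x}\phi_t(\mathbf x,\mathbf y)$ and the wall kernels $K^{\{\alpha\}}$ via the $L^1$/dyadic lemmas of Section 4 (this is also where the hypotheses $2s>\mathbf N+1$ and $M>\lfloor\mathbf N\rfloor+1$, and the integrality of $s$, are actually consumed). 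Without supplying some such mechanism, the kernel estimates remain an assertion, and the proof is incomplete.

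Moreover, the estimates you write down are stronger than what is true. The size bound you state for the translation carries an extra Euclidean factor $(1+\|\mathbf x-\mathbf y\|/t)^{-a}$ with $a>0$; such mixed Euclidean/orbit decay is not established in this generality (it is delicate even for the heat kernel) and is not what the paper proves --- Proposition~\ref{propo:est_phi} gives only $\mathcal V(\mathbf x,\mathbf y,t)^{-1}(1+d(\mathbf x,\mathbf y)/t)^{-M}$. Likewise your H\"older estimate
$\|K(\mathbf x,\mathbf y)-K(\mathbf x,\mathbf y')\|_{\mathcal H}\lesssim w(B(\mathbf x,\|\mathbf x-\mathbf y\|))^{-1}\bigl(\|\mathbf y-\mathbf y'\|/\|\mathbf x-\mathbf y\|\bigr)^{\gamma}$
for $\|\mathbf y-\mathbf y'\|\le\frac12\|\mathbf x-\mathbf y\|$ is false as written: taking $\mathbf y$ near $\sigma(\mathbf x)$ for a nontrivial $\sigma\in G$, one has $d(\mathbf x,\mathbf y)\ll\|\mathbf x-\mathbf y\|$ and the kernel is of size $w(B(\mathbf x,d(\mathbf x,\mathbf y)))^{-1}$, which can vastly exceed your right-hand side. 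The correct (and sufficient) formulation, which the paper's Proposition~\ref{Calderon-vector} is built around, measures the size in terms of $d(\mathbf x,\mathbf y)$ and requires the H\"older bound only for $2\|\mathbf y-\mathbf y'\|\le d(\mathbf x,\mathbf y)$, as in \eqref{eq:kernel_assumption_1}--\eqref{eq:kernel_assumption_2}; this mixing of the two distances is precisely what makes the Calder\'on--Zygmund step work in the Dunkl setting. A secondary caution: your reduction $t\,T_j(\phi_t*f)=(T_j\phi)_t*f$ is correct, but $T_j\phi$ has only $2s-1$ controlled derivatives, so feeding it back into an $S_\psi$-type theorem costs a derivative that the hypothesis $2s>\mathbf N+1$ may not afford; the paper avoids this by estimating the kernel $t\nabla_{k,\mathbf x}\phi_t(\mathbf x,\mathbf y)$ directly.
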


 In order to state  lower bounds for the square functions we need additional assumptions on the functions $\phi$ and $\psi$. Let $\mathcal F$ denote the Dunkl transform (see~\eqref{eq:transform}). We say that the Dunkl transform $\mathcal F\phi$
 is {\textit{not identically zero along any direction}} if
 \begin{equation}\label{eq:non-degenarate}
\sup_{t>0} |\mathcal F\phi(t \xi)|>0 \quad \text{ for every  vector } \xi\in \mathbb R^N, \ \xi\ne 0.
\end{equation}
 This happens if e.g. $ \int_{\mathbb{R}^N} \phi \, dw\ne 0$.

 \begin{theorem}\label{teo:main2} {Assume that $\phi,\psi$ satisfy the assumptions of Theorem \ref{teo:main1} and the functions  $\mathcal F\phi$ and $\mathcal F\psi$ are not identically zero along any direction.} Then for every $1<p<\infty$ there is a constant $C_p>0$ such that for all $f \in L^p(dw)$ we have
 \begin{equation}\label{eq:lower_phi}
     \| f\|_{L^p(dw)}\leq C_p \| S_{\nabla_k, \phi}f\|_{L^p(dw)},
 \end{equation}
 \begin{equation}\label{eq:lower_psi}
     \| f\|_{L^p(dw)}\leq C_p \| S_{ \psi}f\|_{L^p(dw)},
 \end{equation}
  \begin{equation}\label{eq:lower_Gamma}
     \| f\|_{L^p(dw)}\leq C_p \| \mathfrak g_{\Gamma \phi}f\|_{L^p(dw)}.
 \end{equation}
\end{theorem}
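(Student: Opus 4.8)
The plan is to deduce all three lower bounds from the upper bounds already proved in Theorem~\ref{teo:main1}, by a duality argument combined with the $L^p(dw)$--invertibility of a single Littlewood--Paley operator attached to the square function at hand; the assumption that $\mathcal F\phi$ and $\mathcal F\psi$ are not identically zero along any direction is exactly what makes that operator invertible. I will describe the argument for \eqref{eq:lower_psi} in detail and then indicate the (minor) modifications for \eqref{eq:lower_phi} and \eqref{eq:lower_Gamma}. For \eqref{eq:lower_psi} I would introduce the operator
\[
A_\psi f=\int_0^\infty\psi_t^{\,*}*\psi_t*f\,\frac{dt}{t},
\]
where $\psi^{\,*}$ is fixed by $\mathcal F\psi^{\,*}=\overline{\mathcal F\psi}$, so that $\langle\psi_t^{\,*}*\psi_t*f,g\rangle_{L^2(dw)}=\langle\psi_t*f,\psi_t*g\rangle_{L^2(dw)}$. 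By the behaviour of the Dunkl transform under convolution and dilation, $A_\psi$ is the Dunkl multiplier with symbol
\[
\Theta(\xi)=\int_0^\infty|\mathcal F\psi(t\xi)|^2\,\frac{dt}{t},
\]
and the substitution $t\mapsto t/\|\mathbf x\|$ (at $\xi=\|\mathbf x\|\,\xi/\|\xi\|$) shows that $\Theta$ is homogeneous of degree $0$.

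Next I would check that $A_\psi$ is invertible with $L^p(dw)$--bounded inverse. The cancellation $\int_{\mathbb R^N}\psi\,dw=0$ together with the first moment bound implied by \eqref{eq:main_assum} gives $|\mathcal F\psi(\eta)|\le C\|\eta\|$ near the origin, while $\|\eta\|^{2s}\mathcal F\psi(\eta)=\pm\,\mathcal F(\Delta_k^s\psi)(\eta)$ with $\Delta_k^s\psi\in L^1(dw)$ (again by \eqref{eq:main_assum}) gives $|\mathcal F\psi(\eta)|\le C\|\eta\|^{-2s}$ for $\|\eta\|\ge1$; hence $\Theta(\xi)<\infty$ for $\xi\neq0$. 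Since $\Theta$ depends continuously on $\xi/\|\xi\|$ and, by \eqref{eq:non-degenarate}, $\Theta(\xi)>0$ for every $\xi\neq0$, compactness of the unit sphere yields $0<c\le\Theta(\xi)\le C<\infty$. Differentiating under the integral sign and using \eqref{eq:main_assum} once more, $\Theta$ — and therefore $1/\Theta$ — satisfies the Hörmander--Mikhlin estimates $|\partial^\beta(1/\Theta)(\xi)|\le C_\beta\|\xi\|^{-|\beta|}$ up to the order required by the multiplier theorem for the Dunkl transform. Consequently both $A_\psi$ and $A_\psi^{-1}$ (the multiplier with symbol $1/\Theta$) are bounded on $L^p(dw)$ for every $1<p<\infty$, with $A_\psi^{-1}A_\psi=\mathrm{Id}$.

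The lower bound then follows by duality. For $f$ in a dense class (e.g.\ $f\in L^2(dw)\cap L^p(dw)$ with $\mathcal F f$ compactly supported away from $0$, so that Fubini applies) one has, using the self-adjointness relation above,
\[
\|f\|_{L^p(dw)}=\|A_\psi^{-1}A_\psi f\|_{L^p(dw)}\le C\,\|A_\psi f\|_{L^p(dw)}
=C\sup_{\|g\|_{L^{p'}(dw)}\le1}\left|\int_0^\infty\!\!\int_{\mathbb R^N}(\psi_t*f)(\mathbf x)\,\overline{(\psi_t*g)(\mathbf x)}\,dw(\mathbf x)\,\frac{dt}{t}\right|,
\]
and Cauchy--Schwarz in the variable $(\mathbf x,t)$ (with measure $dw(\mathbf x)\,dt/t$) followed by Hölder in $\mathbf x$ bounds the right-hand side by $C\,\|S_\psi f\|_{L^p(dw)}\sup_{\|g\|_{p'}\le1}\|S_\psi g\|_{L^{p'}(dw)}$; the last supremum is finite by Theorem~\ref{teo:main1}. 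This proves \eqref{eq:lower_psi} on the dense class, and the general case follows because $f\mapsto S_\psi f$ is sublinear and bounded on $L^p(dw)$. For \eqref{eq:lower_phi} one runs the same scheme with the vector-valued operator $A_{\nabla_k}f=\sum_{j=1}^N\int_0^\infty(T_j\phi)_t^{\,*}*(T_j\phi)_t*f\,\tfrac{dt}{t}$, using the homogeneity identity $t\,T_j(\phi_t*f)=(T_j\phi)_t*f$, so that $S_{\nabla_k,\phi}f(\mathbf x)^2=\sum_j\int_0^\infty|(T_j\phi)_t*f(\mathbf x)|^2\,dt/t$ and the symbol of $A_{\nabla_k}$ is $\widetilde\Theta(\xi)=\int_0^\infty\|t\xi\|^2\,|\mathcal F\phi(t\xi)|^2\,dt/t$, once more homogeneous of degree $0$, finite, and bounded below by the non-degeneracy of $\mathcal F\phi$. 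For \eqref{eq:lower_Gamma}, the integration-by-parts identity $\int_{\mathbb R^N}\Gamma(u,v)\,dw=\int_{\mathbb R^N}\langle\nabla_k u,\overline{\nabla_k v}\rangle\,dw$ shows that the Hermitian form $(f,g)\mapsto\int_0^\infty\!\int_{\mathbb R^N}t^2\Gamma(\phi_t*f,\phi_t*g)\,dw\,\tfrac{dt}{t}$ is represented by the \emph{same} operator $A_{\nabla_k}$; since $\Gamma(\cdot,\cdot)$ is pointwise a nonnegative Hermitian form (see \eqref{form_Gamma1}), the pointwise Cauchy--Schwarz inequality $|\Gamma(u,v)|\le\Gamma(u,u)^{1/2}\Gamma(v,v)^{1/2}$ lets one repeat the duality estimate verbatim with $\mathfrak g_{\Gamma,\phi}$ in place of $S_\psi$, Theorem~\ref{teo:main1} again providing $\|\mathfrak g_{\Gamma,\phi}(g)\|_{L^{p'}(dw)}\le C\|g\|_{L^{p'}(dw)}$.

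I expect the main obstacle to be the verification that $1/\Theta$ (and $1/\widetilde\Theta$) is an admissible Dunkl multiplier. This is precisely where the quantitative hypotheses on $\phi,\psi$ are used: the smoothness $\phi,\psi\in C^{2s}$ with $2s>\mathbf N+1$ and the decay $M>\lfloor\mathbf N\rfloor+1$ in \eqref{eq:main_assum} must be enough to guarantee, simultaneously, the absolute convergence of the defining $t$-integrals together with all the $\xi$-derivatives that the Hörmander--Mikhlin condition requires, and the strict positivity $\Theta\ge c>0$ coming from \eqref{eq:non-degenarate} and the compactness of the sphere. The argument also relies on a Hörmander--Mikhlin-type multiplier theorem for the Dunkl transform, and on the routine density/limiting step used to pass from the dense subclass on which Fubini and $A_\psi^{-1}A_\psi=\mathrm{Id}$ are transparent to arbitrary $f\in L^p(dw)$.
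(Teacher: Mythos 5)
Your proposal is correct and follows essentially the same route as the paper: the non-degeneracy assumption makes the degree-zero homogeneous symbol $\tilde c_\psi(\xi)=\int_0^\infty|\mathcal F\psi(t\xi)|^2\,\frac{dt}{t}$ (resp.\ $c_\phi$) bounded above and below, the Dunkl--H\"ormander multiplier theorem of \cite{DzH} gives the $L^p(dw)$-bounded inverse multiplier, and the polarization identities \eqref{eq:polar}--\eqref{eq:polar2} combined with Cauchy--Schwarz and the upper bounds of Theorem \ref{teo:main1} at the dual exponent finish the duality argument, exactly as in the paper (which also uses Proposition \ref{prop:ord_implies_Dunkl} to justify the smoothness of the symbol that you flag as the main obstacle). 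The only cosmetic differences are that the paper moves $\mathcal T_{1/c_\phi}$ onto the dual test function $g$ rather than writing $f=A^{-1}Af$, and it deduces \eqref{eq:lower_Gamma} in one line from \eqref{eq:lower_phi} via the pointwise domination $S_{\nabla_k,\phi}f\leq C\,\mathfrak g_{\Gamma,\phi}f$ of \eqref{eq:dom1} instead of rerunning the duality for the $\Gamma$-form.
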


\begin{corollary}\label{coro:S_t} {Let $s$ be a positive integer such that $2s>\mathbf N+1$. Assume that $\phi\in C^{2s+1}(\mathbb R^N)$  satisfies
\begin{equation}
    |\partial^\beta \phi(\mathbf x)|\leq C(1+\| \mathbf x\|)^{-\mathbf N-M-1} \quad \text{for } |\beta|\leq 2s+1,
\end{equation}
for certain $M>\lfloor \mathbf N \rfloor +1. $}
 Then, for every $1<p<\infty$, there is a constant $C_p>0$ such that for all $f \in L^p(dw)$ we have
 \begin{equation}\label{eq:S_t_upper}
     \| S_{\nabla_t,\phi} f\|_{L^p(dw)}\leq C_p \| f\|_{L^p(dw)}.
 \end{equation}
 If additionally the function $\mathcal F\phi$ is  not identically zero along any direction, then for every $1<p<\infty$ there is a constant $\widetilde{C}_{p}>0$ such that for all $f \in L^p(dw)$ we have
  \begin{equation}\label{eq:S_t_lower}
     \| f\|_{L^p(dw)}\leq \widetilde{C}_p \| S_{\nabla_t, \phi}f\|_{L^p(dw)}.
 \end{equation}
 \end{corollary}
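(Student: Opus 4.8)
The plan is to realize $S_{\nabla_t,\phi}$ as a square function of the form $S_\psi$ for a suitable auxiliary kernel $\psi$, and then to invoke Theorems \ref{teo:main1} and \ref{teo:main2}. The point of departure is the elementary identity obtained by differentiating the dilation $\phi_t(\x)=t^{-\mathbf N}\phi(\x/t)$ in $t$:
\begin{equation*}
 t\,\frac{d}{dt}\phi_t(\x)=\psi_t(\x),\qquad\text{where}\quad \psi(\x):=-\mathbf N\,\phi(\x)-\x\cdot\nabla\phi(\x)=-\mathbf N\,\phi(\x)-\sum_{j=1}^N x_j\,\partial_j\phi(\x).
\end{equation*}
Since $\phi\in C^{2s+1}(\R^N)$, the function $\psi$ belongs to $C^{2s}(\R^N)$, and applying the Leibniz rule to $\partial^\beta(x_j\partial_j\phi)$ for $|\beta|\le 2s$ together with the crude bound $|x_j|\le 1+\|\x\|$ shows that $\psi$ satisfies \eqref{eq:main_assum} with the same $M>\lfloor\mathbf N\rfloor+1$. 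Moreover, since $dw$ is homogeneous of degree $\mathbf N$, the substitution $\y=\x/t$ gives $\int_{\R^N}\phi_t\,dw=\int_{\R^N}\phi\,dw$ for every $t>0$; differentiating this identity in $t$ and using the displayed formula yields $\int_{\R^N}\psi\,dw=0$. Thus $\psi$ fulfills every hypothesis imposed on the cancellative kernel in Theorems \ref{teo:main1} and \ref{teo:main2}.

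The next step is to justify that $t\,\frac{d}{dt}(\phi_t*f)(\x)=\bigl(t\,\frac{d}{dt}\phi_t\bigr)*f(\x)=\psi_t*f(\x)$ for $f\in L^p(dw)$; this is a differentiation-under-the-convolution statement, legitimate because the smoothness and polynomial decay of $\phi$ control the difference quotients of $t\mapsto\phi_t$ uniformly on compact $t$-intervals. Granting it, $S_{\nabla_t,\phi}f=S_\psi f$ pointwise, and \eqref{eq:S_t_upper} is precisely the bound $\|S_\psi f\|_{L^p(dw)}\le C_p\|f\|_{L^p(dw)}$ supplied by Theorem \ref{teo:main1} for this $\psi$.

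It remains to obtain \eqref{eq:S_t_lower}, for which, in view of \eqref{eq:lower_psi} applied to our $\psi$, it suffices to check that $\mathcal F\psi$ is not identically zero along any direction whenever $\mathcal F\phi$ has this property. Applying $\mathcal F$ to the identity $t\,\frac{d}{dt}\phi_t=\psi_t$ and using the dilation rule $\mathcal F(\phi_t)(\xi)=\mathcal F\phi(t\xi)$, one sees that $t\mapsto\mathcal F\phi(t\xi)$ is differentiable and $\mathcal F\psi(t\xi)=t\,\frac{d}{dt}\bigl(\mathcal F\phi(t\xi)\bigr)$ for $\xi\ne0$, $t>0$. Fix $\xi\ne0$ and set $h(t)=\mathcal F\phi(t\xi)$. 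If $\mathcal F\psi$ vanished identically along $\xi$, then $h'\equiv0$ on $(0,\infty)$, so $h$ is constant there; since $\phi\in L^1(dw)$ (by the decay hypothesis), $\mathcal F\phi$ is continuous and vanishes at infinity, which forces $h\equiv0$, i.e.\ $\mathcal F\phi(t\xi)=0$ for all $t>0$, contradicting \eqref{eq:non-degenarate} for $\phi$. Hence $\mathcal F\psi$ is non-degenerate along every direction, and \eqref{eq:lower_psi} yields \eqref{eq:S_t_lower}. I expect the one genuinely technical point to be the passage of the $t$-derivative through the Dunkl convolution; once it is in place, Corollary \ref{coro:S_t} reduces verbatim to the $\psi$-parts of Theorems \ref{teo:main1} and \ref{teo:main2}.
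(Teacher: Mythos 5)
Your proposal is correct and follows essentially the same route as the paper: you construct the same auxiliary function $\psi=-\mathbf N\phi-\sum_j x_j\partial_j\phi$ with $t\frac{d}{dt}\phi_t=\psi_t$, verify the hypotheses of Theorems \ref{teo:main1} and \ref{teo:main2} for $\psi$ (including $\int\psi\,dw=0$ via differentiating the constant integral $\int\phi_t\,dw$), and deduce the non-degeneracy of $\mathcal F\psi$ from that of $\mathcal F\phi$ — the paper phrases this last step by integrating $\mathcal F\psi(t\xi)\,dt/t$ from $t_1$ to $\infty$ to recover $-\mathcal F\phi(t_1\xi)$, which is the integrated form of your "derivative vanishes implies constant implies zero" argument.
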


 In dimension 1 and $\phi(x)=c'_k(1+x^2)^{-(\mathbf N+1)/2}$, which corresponds to the Poisson semigroup $\exp(-t\sqrt{-\Delta_k})$, $L^p$--bounds of the Littlewood-Paley square functions $S_{\nabla_k,\phi}$ and $S_{\nabla_t,\phi}$ were studied in \cite{SoltaniJFA},  \cite{LiaoZhangLi2017} and continued in higher dimensions for $1<p\leq 2$ in \cite{SoltaniJIPAM}, including the case of $\mathfrak g_\phi$ in~\cite{Ch}. In the case when $\phi(\mathbf x)=\exp(-\| \mathbf x\|^2)$ and  $1<p\leq 2$,  the upper and lower bounds for the square function $\mathfrak g_{\Gamma ,\phi}$ where proved in \cite{LiZhao}, while  the case of $2<p<\infty$ was only considered there for the particular root system, namely when the Coxeter group is isomorphic to $\mathbb Z_2^N$. We want to emphasize  that  our methods, which allow us to obtain  the bounds for the full range of $p$'s and not necessary radial functions $\phi$ and $\psi$, are different than those of \cite{LiZhao}. To prove Theorems \ref{teo:main1} and \ref{teo:main2}, we adapt  techniques of the Calder\'on-Zygmund analysis to the Dunkl setting (see Section \ref{sec:CZ}).  Then, thanks to that,  our proofs  are reduced to obtaining $L^2(dw)$ bounds and verifying that the associated kernels to the square functions  satisfy relevant estimates.

 \section{Preliminaries and notation}\label{sec:preliminaries}

The Dunkl theory is a generalization of the Euclidean Fourier analysis. It started with the seminal article \cite{Dunkl} and developed extensively afterwards (see e.g. \cite{RoeslerDeJeu}, \cite{Dunkl0}, \cite{Dunkl3}, \cite{Dunkl2},  \cite{Roesler2}, \cite{Roesle99}, \cite{Roesler2003}, \cite{ThangaveluXu}).
In this section we present basic facts concerning the theory of the Dunkl operators.  For details we refer the reader to~\cite{Dunkl},~\cite{Roesler3}, and~\cite{Roesler-Voit}.

We consider the Euclidean space $\mathbb R^N$ with the scalar product $\langle\mathbf x,\mathbf y\rangle=\sum_{j=1}^N x_jy_j
$, $\mathbf x=(x_1,...,x_N)$, $\mathbf y=(y_1,...,y_N)$, and the norm $\| \mathbf x\|^2=\langle \mathbf x,\mathbf x\rangle$. For a nonzero vector $\alpha\in\mathbb R^N$,  the reflection $\sigma_\alpha$ with respect to the hyperplane $\alpha^\perp$ orthogonal to $\alpha$ is given by
\begin{equation}\label{eq:refl}
\sigma_\alpha (\mathbf x)=\mathbf x-2\frac{\langle \mathbf x,\alpha\rangle}{\| \alpha\| ^2}\alpha.
\end{equation}
In this paper we fix a normalized root system in $\mathbb R^N$, that is, a finite set  $R\subset \mathbb R^N\setminus\{0\}$ such that   $\sigma_\alpha (R)=R$ and $\|\alpha\|=\sqrt{2}$ for every $\alpha\in R$. The finite group $G$ generated by the reflections $\sigma_\alpha \in R$ is called the {\it Weyl group} ({\it reflection group}) of the root system. A~{\textit{multiplicity function}} is a $G$-invariant function $k:R\to\mathbb C$ which will be fixed and $\geq 0$  throughout this paper.
 Let
\begin{align}\label{measure_w}
dw(\mathbf x)=\prod_{\alpha\in R}|\langle \mathbf x,\alpha\rangle|^{k(\alpha)}\, d\mathbf x
\end{align}
be  the associated measure in $\mathbb R^N$, where, here and subsequently, $d\mathbf x$ stands for the Lebesgue measure in $\mathbb R^N$.
We denote by $\mathbf N=N+\sum_{\alpha \in R} k(\alpha)$ the homogeneous dimension of the system. Clearly,
\begin{align*} w(B(t\mathbf x, tr))=t^{\mathbf N}w(B(\mathbf x,r)) \ \ \text{\rm for all } \mathbf x\in\mathbb R^N, \ t,r>0
\end{align*}
and
\begin{equation}\label{eq:integral_scaled}
\int_{\mathbb R^N} f(\mathbf x)\, dw(\mathbf x)=\int_{\mathbb R^N} t^{-\mathbf N} f(\mathbf x\slash t)\, dw(\mathbf x)\ \ \text{for} \ f\in L^1(dw)  \   \text{\rm and} \  t>0.
\end{equation}
Observe that (\footnote{The symbol $\sim$ between two positive expressions means that their ratio remains between two positive constants.})
\begin{equation}\label{eq:asymp}
w(B(\mathbf x,r))\sim r^{N}\prod_{\alpha \in R} (|\langle \mathbf x,\alpha\rangle |+r)^{k(\alpha)},
\end{equation}
so $dw(\mathbf x)$ is doubling, that is, there is a constant $C>0$ such that
\begin{equation}\label{eq:doubling} w(B(\mathbf x,2r))\leq C w(B(\mathbf x,r)) \ \ \text{ for all } \mathbf x\in\mathbb R^N, \ r>0.
\end{equation}
Moreover, there exists a constant $C\ge1$ such that,
for every $\mathbf{x}\in\mathbb{R}^N$ and for every $r_2\ge r_1>0$,
\begin{equation}\label{eq:growth}
C^{-1}\Big(\frac{r_2}{r_1}\Big)^{N}\leq\frac{{w}(B(\mathbf{x},r_2))}{{w}(B(\mathbf{x},r_1))}\leq C \Big(\frac{r_2}{r_1}\Big)^{\mathbf{N}}.
\end{equation}

For $\xi \in \mathbb{R}^N$, the {\it Dunkl operators} $T_\xi$  are the following $k$-deformations of the directional derivatives $\partial_\xi$ by a  difference operator:
\begin{equation}\label{eq:Dunkl_op}
     T_\xi f(\mathbf x)= \partial_\xi f(\mathbf x) + \sum_{\alpha\in R} \frac{k(\alpha)}{2}\langle\alpha ,\xi\rangle\frac{f(\mathbf x)-f(\sigma_\alpha (\mathbf x))}{\langle \alpha,\mathbf x\rangle}.
\end{equation}
The Dunkl operators $T_{\xi}$, which were introduced in~\cite{Dunkl}, commute and are skew-symmetric with respect to the $G$-invariant measure $dw$.
For two reasonable functions $f,g$ we have the following integration by parts formula
\begin{equation}\label{eq:by_parts}
    \int_{\mathbb{R}^N}T_{\xi}f(\mathbf{x})g(\mathbf{x})\,dw(\mathbf{x})=-\int_{\mathbb{R}^N}f(\mathbf{x})T_{\xi}g(\mathbf{x})\,dw(\mathbf{x}).
\end{equation}

For fixed $\mathbf y\in\mathbb R^N$ the {\it Dunkl kernel} $E(\mathbf x,\mathbf y)$ is the unique analytic solution to the system
\begin{equation}\label{eq:Dunkl_kernel_definition}
    T_\xi f=\langle \xi,\mathbf y\rangle f, \ \ f(0)=1.
\end{equation}
The function $E(\mathbf x ,\mathbf y)$, which generalizes the exponential  function $e^{\langle \mathbf x,\mathbf y\rangle}$, has the unique extension to a holomorphic function on $\mathbb C^N\times \mathbb C^N$. Moreover, it satisfies
{\begin{equation}\label{eq:E_prop}
    E(\mathbf{x},\mathbf{y})=E(\mathbf{y},\mathbf{x}) \text{ and }E(\lambda\mathbf{x},\mathbf{y})=E(\mathbf{x},\lambda\mathbf{y})
\end{equation}
for all $\mathbf{x},\mathbf{y} \in \mathbb{C}^N$ and $\lambda \in \mathbb{C}$.} The following theorem was proved in~\cite{Roesle99}.
\begin{theorem}[{\cite[Corollary 5.4]{Roesle99}}]\label{teo:Roesler_Dunkl_kernel}
For all $\mathbf{x}, \mathbf{z} \in \mathbb{R}^N$ and $\nu \in \mathbb{N}_0^{N}$ we have
\begin{align*}
    |\partial^{\nu}_{\mathbf{z}}E(\mathbf{x},i\mathbf{z})| \leq \|\mathbf{x}\|^{|\nu|}.
\end{align*}
\end{theorem}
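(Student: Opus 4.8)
The statement is precisely Rösler's estimate \cite[Corollary 5.4]{Roesle99}, so the plan is to reconstruct its proof from the positivity of the Dunkl intertwining operator $V_k$. Recall that $V_k$ is the unique degree-preserving linear operator on polynomials with $V_k 1 = 1$ and $T_\xi V_k = V_k \partial_\xi$ for all $\xi\in\mathbb{R}^N$. The first and essential step is to invoke Rösler's positivity theorem: for every $\mathbf{x}\in\mathbb{R}^N$ there is a probability measure $\mu_{\mathbf{x}}$ on $\mathbb{R}^N$, supported in the convex hull $\operatorname{co}(G\cdot\mathbf{x})$ of the $G$-orbit of $\mathbf{x}$, with
$$V_k f(\mathbf{x}) = \int_{\mathbb{R}^N} f(\mathbf{u})\, d\mu_{\mathbf{x}}(\mathbf{u})$$
for all polynomials $f$, the representation extending to continuous functions of at most exponential growth. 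I would then record that, since each $g\in G$ is orthogonal, $\|g\cdot\mathbf{x}\| = \|\mathbf{x}\|$, so $\supp\mu_{\mathbf{x}}\subseteq\operatorname{co}(G\cdot\mathbf{x})\subseteq\{\mathbf{u}:\|\mathbf{u}\|\leq\|\mathbf{x}\|\}$.

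Next I would use the identity $E(\mathbf{x},\mathbf{y}) = V_k\bigl(e^{\langle\cdot,\mathbf{y}\rangle}\bigr)(\mathbf{x})$. To see it, fix $\mathbf{y}\in\mathbb{R}^N$: the function $\mathbf{x}\mapsto V_k\bigl(e^{\langle\cdot,\mathbf{y}\rangle}\bigr)(\mathbf{x})$ takes the value $1$ at the origin and satisfies $T_\xi\bigl(V_k e^{\langle\cdot,\mathbf{y}\rangle}\bigr) = V_k\bigl(\partial_\xi e^{\langle\cdot,\mathbf{y}\rangle}\bigr) = \langle\xi,\mathbf{y}\rangle\,V_k\bigl(e^{\langle\cdot,\mathbf{y}\rangle}\bigr)$, so by the uniqueness in \eqref{eq:Dunkl_kernel_definition} it equals $E(\cdot,\mathbf{y})$; both sides being holomorphic in $\mathbf{y}$, the identity persists for $\mathbf{y}\in\mathbb{C}^N$. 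Combined with the previous step this yields, for $\mathbf{z}\in\mathbb{R}^N$,
$$E(\mathbf{x},i\mathbf{z}) = \int_{\mathbb{R}^N} e^{i\langle\mathbf{u},\mathbf{z}\rangle}\, d\mu_{\mathbf{x}}(\mathbf{u}).$$

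Finally, since $\mu_{\mathbf{x}}$ is compactly supported I may differentiate under the integral sign: for $\nu\in\mathbb{N}_0^N$,
$$\partial^{\nu}_{\mathbf{z}} E(\mathbf{x},i\mathbf{z}) = i^{|\nu|}\int_{\mathbb{R}^N} u_1^{\nu_1}\cdots u_N^{\nu_N}\, e^{i\langle\mathbf{u},\mathbf{z}\rangle}\, d\mu_{\mathbf{x}}(\mathbf{u}),$$
whence, bounding $|e^{i\langle\mathbf{u},\mathbf{z}\rangle}| = 1$ and $|u_j|\leq\|\mathbf{u}\|\leq\|\mathbf{x}\|$ on $\supp\mu_{\mathbf{x}}$,
$$|\partial^{\nu}_{\mathbf{z}} E(\mathbf{x},i\mathbf{z})| \leq \int_{\mathbb{R}^N} |u_1|^{\nu_1}\cdots|u_N|^{\nu_N}\, d\mu_{\mathbf{x}}(\mathbf{u}) \leq \|\mathbf{x}\|^{|\nu|}\mu_{\mathbf{x}}(\mathbb{R}^N) = \|\mathbf{x}\|^{|\nu|}.$$
The entire weight of the argument sits in the first step --- Rösler's positivity theorem for $V_k$, proved by a Hörmander-type analysis of the intertwining operator together with a limiting argument; once that is granted (as here, through the citation), everything else is the one-line differentiation-under-the-integral estimate above. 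A self-contained alternative would expand $E(\mathbf{x},i\mathbf{z})$ into homogeneous components and estimate each using the operator norm of $V_k$ on the continuous functions on a $G$-invariant ball, but that norm bound is itself equivalent to the positivity statement, so it is not a genuine shortcut.
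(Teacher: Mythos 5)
Your argument is correct and is exactly the standard proof of this estimate: the paper itself offers no proof, quoting the result directly from R\"osler's Corollary 5.4, and your reconstruction via the positivity of the intertwining operator $V_k$, the representation $E(\mathbf{x},i\mathbf{z})=\int e^{i\langle\mathbf{u},\mathbf{z}\rangle}\,d\mu_{\mathbf{x}}(\mathbf{u})$ with $\supp\mu_{\mathbf{x}}\subseteq\{\|\mathbf{u}\|\leq\|\mathbf{x}\|\}$, and differentiation under the integral is precisely how the cited source obtains it. No gaps.
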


Let $\{e_j\}_{1 \leq j \leq N}$ denote the canonical orthonormal basis in $\mathbb R^N$ and let $T_j=T_{e_j}$. For multi-index $\beta=(\beta_1,\beta_2,\ldots,\beta_N)  \in\mathbb N_0^N$, we set
$$ |\beta|=\beta_1+\beta_2 +\ldots +\beta_N,$$
$${\partial_{j}^{0}=I},\;\partial^{\beta}=\partial_1^{\beta_1} \circ \partial_2^{\beta_2}\circ \ldots \circ \partial_N^{\beta_N},$$
$${T_{j}^{0}=I},\;T^{\beta}=T_1^{\beta_1} \circ T_2^{\beta_2}\circ \ldots \circ T_N^{\beta_N}.$$
The Dunkl transform
  \begin{equation}\label{eq:transform}\mathcal F f(\xi)=c_k^{-1}\int_{\mathbb R^N} E(-i\xi, \mathbf x)f(\mathbf x)\, dw(\mathbf x),
  \end{equation}
  where
  $$c_k=\int_{\mathbb{R}^N}e^{-\frac{\|\mathbf{x}\|^2}{2}}\,dw(\mathbf{x})>0,$$
   originally defined for $f\in L^1(dw)$, is an isometry on $L^2(dw)$, i.e.,
   \begin{equation}\label{eq:Plancherel}
       \|f\|_{L^2(dw)}=\|\mathcal{F}f\|_{L^2(dw)} \text{ for all }f \in L^2(dw),
   \end{equation}
and preserves the Schwartz class of functions $\mathcal S(\mathbb R^N)$ (see \cite{deJeu}). Its inverse $\mathcal F^{-1}$ has the form
  \begin{align*} \mathcal F^{-1} g(\mathbf{x})=c_k^{-1}\int_{\mathbb R^N} E(i\xi, \mathbf x)g(\xi)\, dw(\xi).
  \end{align*}
Moreover,
\begin{equation}\label{eq:der_transform}
    \mathcal{F}(T_j f)(\xi)=i\xi_j\mathcal{F}f(\xi).
\end{equation}
The {\it Dunkl translation\/} $\tau_{\mathbf{x}}f$ of a function $f\in\mathcal{S}(\mathbb{R}^N)$ by $\mathbf{x}\in\mathbb{R}^N$ is defined by
\begin{align*}
\tau_{\mathbf{x}} f(\mathbf{y})=c_k^{-1} \int_{\mathbb{R}^N}{E}(i\xi,\mathbf{x})\,{E}(i\xi,\mathbf{y})\,\mathcal{F}f(\xi)\,{dw}(\xi).
\end{align*}
  It is a contraction on $L^2(dw)$, however it is an open  problem  if the Dunkl translations are bounded operators on $L^p(dw)$ for $p\ne 2$.

  {The \textit{Dunkl convolution\/} $f*g$ of two reasonable functions (for instance Schwartz functions) is defined by
  \begin{equation}\label{eq:conv_def}
      (f*g)(\mathbf{x})=c_k\,\mathcal{F}^{-1}[(\mathcal{F}f)(\mathcal{F}g)](\mathbf{x})=\int_{\mathbb{R}^N}(\mathcal{F}f)(\xi)\,(\mathcal{F}g)(\xi)\,E(\mathbf{x},i\xi)\,dw(\xi) \text{ for }\mathbf{x}\in\mathbb{R}^N,
  \end{equation}
or, equivalently, by}
\begin{align*}
  {(f{*}g)(\mathbf{x})=\int_{\mathbb{R}^N}f(\mathbf{y})\,\tau_{\mathbf{x}}g(-\mathbf{y})\,{dw}(\mathbf{y})=\int_{\mathbb R^N} f(\mathbf y)g(\mathbf x,\mathbf y) \,dw(\mathbf{y}) \text{ for all } \mathbf{x}\in\mathbb{R}^N},
\end{align*}
where, here and subsequently,
\begin{equation}\label{eq:translation} g(\mathbf x,\mathbf y)=\tau_{\mathbf x}g(-\mathbf y) =\tau_{-\mathbf y}g(\mathbf x)=g(-\mathbf y,-\mathbf x).
\end{equation} Let us point out that it is not known if the Young inequality $\| f*g\|_{L^p(dw)}\leq C\| f\|_{L^1(dw)}\| g\|_{L^p(dw)}$ holds in the Dunkl setting, unless $p=2$ or $G=\mathbb Z_2^N$ or one of the functions is radial.

The {\it Dunkl Laplacian} associated with $R$ and $k$  is the differential-difference operator $\Delta_k=\sum_{j=1}^N T_{j}^2$, which  acts on $C^2(\mathbb{R}^N)$-functions by

\begin{align*}
    \Delta_k f(\mathbf x)=\Delta_{\rm eucl} f(\mathbf x)+\sum_{\alpha\in R} k(\alpha) \delta_\alpha f(\mathbf x),
\end{align*}
\begin{align*}
    \delta_\alpha f(\mathbf x)=\frac{\partial_\alpha f(\mathbf x)}{\langle \alpha , \mathbf x\rangle} - \frac{\|\alpha\|^2}{2} \frac{f(\mathbf x)-f(\sigma_\alpha \mathbf x)}{\langle \alpha, \mathbf x\rangle^2}.
\end{align*}
Obviously, $\mathcal F(\Delta_k f)(\xi)=-\| \xi\|^2\mathcal Ff(\xi)$. The operator $\Delta_k$ is essentially self-adjoint on $L^2(dw)$ (see for instance \cite[Theorem\;3.1]{AH}) and generates the semigroup $e^{t\Delta_k}$  of linear self-adjoint contractions on $L^2(dw)$. The semigroup has the form
  \begin{align*}
  e^{t\Delta_k} f(\mathbf x)=\mathcal F^{-1}(e^{-t\|\xi\|^2}\mathcal Ff(\xi))(\mathbf x)=\int_{\mathbb R^N} h_t(\mathbf x,\mathbf y)f(\mathbf y)\, dw(\mathbf y),
  \end{align*}
  where the heat kernel
  \begin{equation}\label{eq:heat_def}
      h_t(\mathbf x,\mathbf y)=\tau_{\mathbf x}h_t(-\mathbf y),
      \end{equation}
      \begin{equation}\label{eq:heat_h_t} h_t(\mathbf x)=\mathcal F^{-1} (e^{-t\|\xi\|^2})(\mathbf x)=c_k^{-1} (2t)^{-\mathbf N\slash 2}e^{-\| \mathbf x\|^2\slash (4t)}
  \end{equation}
  is a $C^\infty$-function of all variables $\mathbf x,\mathbf y \in \mathbb{R}^N$, $t>0$, and satisfies \begin{align*} 0<h_t(\mathbf x,\mathbf y)=h_t(\mathbf y,\mathbf x),
  \end{align*}
 \begin{align*} \int_{\mathbb R^N} h_t(\mathbf x,\mathbf y)\, dw(\mathbf y)=1.
 \end{align*}
  Set
\begin{equation}\label{eq:d}
d(\mathbf x,\mathbf y)=\min_{\sigma\in G} \| \mathbf x-\sigma(\mathbf y)\|,
\end{equation}

$$V(\mathbf x,\mathbf y,t)=\max (w(B(\mathbf x,t)),w(B(\mathbf y, t))),\quad \mathcal V(\mathbf x,\mathbf y,t)=w(B(\mathbf x,t))^{1/2}w(B(\mathbf y,t))^{1/2}.$$
Note that by~\eqref{eq:asymp} and~\eqref{eq:doubling} we have
\begin{equation}\label{eq:ball_ball}
    V(\mathbf x,\mathbf y,d(\mathbf x,\mathbf y))\sim\mathcal V(\mathbf x,\mathbf y,d(\mathbf x,\mathbf y))\sim w(B(\mathbf x,d(\mathbf x,\mathbf y)))\sim w(B(\mathbf y,d(\mathbf x,\mathbf y))).
    \end{equation}
The following theorem was proved in~\cite[Theorem 4.1]{ADzH}.

\begin{theorem}\label{teo:heat}
{{\rm(a) Time derivatives\,:}
for any non-negative integer $m$,} there are constants \,$C,c>0$ such that
\begin{equation}\label{Gauss}
\left|{\partial_t^m}\hspace{.5mm}h_t(\mathbf{x},\mathbf{y})\right|\leq C\,{t^{-m}}\,V(\mathbf{x},\mathbf{y},\!\sqrt{t\,})^{-1}\,e^{-\hspace{.25mm}c\hspace{.5mm}d(\mathbf{x},\mathbf{y})^2\slash t},
\end{equation}
{for every \,$t\hspace{-.5mm}>\hspace{-.5mm}0$ and for every \,$\mathbf{x},\mathbf{y}\!\in\hspace{-.5mm}\mathbb{R}^N$.}
\par\noindent
{{\rm(b) H\"older  bounds\,:}
for any nonnegative integer $m$,} there are constants \,$C,c>0$ such that
\begin{equation}\label{Holder}
\left|{\partial_t^{{m}}}h_t(\mathbf{x},\mathbf{y})-{\partial_t^{{m}}}h_t(\mathbf{x},\mathbf{y}')\right|\leq C\,t^{-{m}}\,\Bigl(\frac{{\|}\mathbf{y}\!-\!\mathbf{y}'{\|}}{\sqrt{t\,}}\Bigr)\,V(\mathbf{x},\mathbf{y},\!\sqrt{t\,})^{-1}\,e^{-\hspace{.25mm}c\hspace{.5mm}d(\mathbf{x},\mathbf{y})^2\slash t},
\end{equation}
{for every \,$t\hspace{-.5mm}>\hspace{-.5mm}0$ and for every \,$\mathbf{x},\mathbf{y},\mathbf{y}'\hspace{-1mm}\in\hspace{-.5mm}\mathbb{R}^N$ such that \,${\|}\mathbf{y}\!-\!\mathbf{y}'{\|}\!<\!\sqrt{t\,}$}.
\par\noindent
{{\rm(c) Dunkl derivative\,:}
for any \,$\xi\hspace{-.5mm}\in\hspace{-.5mm}\mathbb{R}^N$ and for any nonnegative integer $m$,} there are constants \,$C,c\hspace{-.5mm}>\hspace{-.5mm}0$ such that
\begin{equation}\label{TxiDtHeat}
\Bigl|\hspace{.5mm}T_{{\xi},\mathbf{x}}\,{\partial_t^m}\hspace{.5mm}h_t(\mathbf{x},\mathbf{y})\Bigr|\leq C\,t^{-m-1\slash 2}\,V(\mathbf{x},\mathbf{y},\!\sqrt{t\,})^{-1}\,e^{-\hspace{.25mm}c\hspace{.5mm}d(\mathbf{x},\mathbf{y})^2\slash t}\,{,}
\end{equation}
{for all \,$t\hspace{-.5mm}>\hspace{-.5mm}0$ and \,$\mathbf{x},\mathbf{y}\!\in\hspace{-.5mm}\mathbb{R}^N$}.
\par\noindent
{{\rm(d) Mixed derivatives\,:}
for any nonnegative integer \,$m$ and for any multi-indices \,$\alpha,\beta$, there are constants \,$C,c\hspace{-.5mm}>\hspace{-.5mm}0$ such that, for every \,$t>0$ and for every \,$\mathbf{x},\mathbf{y}\in\mathbb{R}^N$,
\begin{equation}\label{DtDxDyHeat}
\bigl|\hspace{.25mm}\partial_t^m\partial_{\mathbf{x}}^{\alpha}\partial_{\mathbf{y}}^{\beta}h_t(\mathbf{x},\mathbf{y})\bigr|\le C\,t^{-m-\frac{|\alpha|}2-\frac{|\beta|}2}\,V(\mathbf{x},\mathbf{y},\!\sqrt{t\,})^{-1}\,e^{-\hspace{.25mm}c\hspace{.5mm}d(\mathbf{x},\mathbf{y})^2\slash t},
\end{equation}
for every \,$t\hspace{-.5mm}>\hspace{-.5mm}0$ and for every \,$\mathbf{x},\mathbf{y}\!\in\hspace{-.5mm}\mathbb{R}^N$.}
\end{theorem}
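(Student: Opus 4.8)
\emph{Plan.} The plan is to reduce all four estimates to the single mixed-derivative bound \eqref{DtDxDyHeat}, and to prove \eqref{DtDxDyHeat} by differentiating R\"osler's explicit formula for the Dunkl heat kernel, the decisive input being a sharp estimate for the Dunkl kernel $E$ and its derivatives that records the distance of $\mathbf{x}$ and $\mathbf{y}$ to the walls $\alpha^\perp$, $\alpha\in R$.

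\emph{Reductions.} First I would dispose of \eqref{Gauss} (for $m\ge1$), \eqref{Holder} and \eqref{TxiDtHeat}, granting \eqref{DtDxDyHeat}; these use only the $G$-invariance of $w$ and of $d$ (so $w(B(\sigma\mathbf{x},r))=w(B(\mathbf{x},r))$ and $d(\sigma\mathbf{x},\mathbf{y})=d(\mathbf{x},\mathbf{y})$ for $\sigma\in G$), the doubling property \eqref{eq:doubling}, and the $1$-Lipschitz property of $d(\mathbf{x},\cdot)$. For $m\ge1$: by \eqref{eq:heat_def}--\eqref{eq:heat_h_t} and the fact that $E$ is holomorphic on $\mathbb{C}^N\times\mathbb{C}^N$, the map $z\mapsto h_z(\mathbf{x},\mathbf{y})$ extends holomorphically to $\{\Re z>0\}$ and satisfies there the analogue of \eqref{Gauss} with $t$ replaced by $|z|$ and $1/t$ by $\Re(1/z)$ (the proof of \eqref{DtDxDyHeat} below applies verbatim for complex time), so a Cauchy estimate on the circle $|\zeta-t|=t/2$, where $|\zeta|\sim t$ and $\Re(1/\zeta)\sim1/t$, yields $|\partial_t^m h_t(\mathbf{x},\mathbf{y})|\le C_m t^{-m}\sup_{|\zeta-t|=t/2}|h_\zeta(\mathbf{x},\mathbf{y})|$, of the required form. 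The bound \eqref{Holder} is \eqref{DtDxDyHeat} with $|\beta|=1$: write $\partial_t^m h_t(\mathbf{x},\mathbf{y})-\partial_t^m h_t(\mathbf{x},\mathbf{y}')$ as an integral of $\nabla_{\mathbf{y}}\partial_t^m h_t$ over $[\mathbf{y},\mathbf{y}']$; when $\|\mathbf{y}-\mathbf{y}'\|<\sqrt t$, doubling gives $w(B(\mathbf{y}'',\sqrt t))\sim w(B(\mathbf{y},\sqrt t))$ along the segment, and separating the trivial case $d(\mathbf{x},\mathbf{y})\le2\sqrt t$ together with the Lipschitz bound gives $e^{-c\,d(\mathbf{x},\mathbf{y}'')^2/t}\le Ce^{-c'd(\mathbf{x},\mathbf{y})^2/t}$. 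The bound \eqref{TxiDtHeat} follows by expanding $T_{\xi,\mathbf{x}}$ via \eqref{eq:Dunkl_op}: the differential part is \eqref{DtDxDyHeat} with $|\alpha|=1$, and each reflection quotient $\bigl(\partial_t^m h_t(\mathbf{x},\mathbf{y})-\partial_t^m h_t(\sigma_\alpha\mathbf{x},\mathbf{y})\bigr)\big/\langle\alpha,\mathbf{x}\rangle$ is treated in two regimes --- for $|\langle\alpha,\mathbf{x}\rangle|\ge\sqrt t$ one estimates the two terms separately using the case $\alpha=\beta=0$ of \eqref{DtDxDyHeat} and the $G$-invariances, and for $|\langle\alpha,\mathbf{x}\rangle|<\sqrt t$ one integrates $\nabla_{\mathbf{x}}\partial_t^m h_t$ along the short segment $[\mathbf{x},\sigma_\alpha\mathbf{x}]=\{\mathbf{x}-s\langle\mathbf{x},\alpha\rangle\alpha:0\le s\le1\}$ (of length $\sqrt2\,|\langle\mathbf{x},\alpha\rangle|$, since $\|\alpha\|^2=2$), using \eqref{DtDxDyHeat} with $|\alpha|=1$, doubling and the Lipschitz bound.

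\emph{Set-up for \eqref{DtDxDyHeat}.} By the parabolic scaling $h_t(\mathbf{x},\mathbf{y})=t^{-\mathbf{N}/2}h_1(\mathbf{x}/\sqrt t,\mathbf{y}/\sqrt t)$, $d(r\mathbf{x},r\mathbf{y})=r\,d(\mathbf{x},\mathbf{y})$ and $w(B(r\mathbf{x},rs))=r^{\mathbf{N}}w(B(\mathbf{x},s))$, one may fix $t$. The starting point is R\"osler's formula $h_t(\mathbf{x},\mathbf{y})=c_k^{-1}(2t)^{-\mathbf{N}/2}e^{-(\|\mathbf{x}\|^2+\|\mathbf{y}\|^2)/(4t)}E(\mathbf{x}/(2t),\mathbf{y})$ together with R\"osler's positive radial product formula \cite{Roesler2003}, which furnishes a probability measure $\mu_{\mathbf{x}}$, supported in $\mathrm{conv}(G\mathbf{x})$ and independent of $\mathbf{y}$ and $t$, with
\[
h_t(\mathbf{x},\mathbf{y})=c_k^{-1}(2t)^{-\mathbf{N}/2}\int_{\mathbb{R}^N}e^{-\rho(\mathbf{x},\mathbf{y},\eta)^2/(4t)}\,d\mu_{\mathbf{x}}(\eta),\qquad\rho(\mathbf{x},\mathbf{y},\eta)^2=\|\mathbf{x}\|^2+\|\mathbf{y}\|^2-2\langle\eta,\mathbf{y}\rangle
\]
(equivalently, $E(\mathbf{a},\mathbf{b})=\int e^{\langle\eta,\mathbf{b}\rangle}\,d\mu_{\mathbf{a}}(\eta)$). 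Two facts are used repeatedly: $\eta\in\mathrm{conv}(G\mathbf{x})$ forces $\langle\eta,\mathbf{y}\rangle\le\max_{\sigma\in G}\langle\sigma\mathbf{x},\mathbf{y}\rangle$, hence $\rho(\mathbf{x},\mathbf{y},\eta)^2\ge d(\mathbf{x},\mathbf{y})^2$; and $\rho(\mathbf{x},\mathbf{y},\eta)^2=\|\mathbf{y}-\eta\|^2+\|\mathbf{x}\|^2-\|\eta\|^2\ge\|\mathbf{y}-\eta\|^2$, since $\|\eta\|\le\|\mathbf{x}\|$ on the hull. The first fact gives at once the \emph{crude} bound $h_t(\mathbf{x},\mathbf{y})\le C(2t)^{-\mathbf{N}/2}e^{-d(\mathbf{x},\mathbf{y})^2/(4t)}$ (and, via R\"osler's representation and its derivatives, the analogous crude bound $|\partial_\mathbf{x}^\alpha\partial_\mathbf{y}^\beta h_t|\le C(2t)^{-\mathbf{N}/2}t^{-|\alpha|/2-|\beta|/2}e^{-c\,d(\mathbf{x},\mathbf{y})^2/t}$). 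The $\mathbf{y}$-derivatives are in fact easy: each $\partial_{\mathbf{y}_j}$ acting on $e^{-\rho^2/(4t)}$ produces the factor $(\eta_j-y_j)/(2t)$, which by the second fact is dominated by $\rho/(2t)$ and hence absorbed into the Gaussian at the cost of $t^{-1/2}$; iterating gives $|\partial_\mathbf{y}^\beta h_t(\mathbf{x},\mathbf{y})|\le C t^{-|\beta|/2}(2t)^{-\mathbf{N}/2}\int e^{-\rho^2/(8t)}\,d\mu_{\mathbf{x}}(\eta)$, an integral of the same shape as $h_t(\mathbf{x},\mathbf{y})$ itself. The classical $\mathbf{x}$-derivatives, where one cannot differentiate $\mu_{\mathbf{x}}$ by inspection, are the delicate point and are tied to a sharp estimate for the $\mathbf{x}$-derivatives of $E$ near the walls.

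\emph{The main obstacle.} Since $w(B(\mathbf{x},\sqrt t))\ge c\,t^{\mathbf{N}/2}$ by \eqref{eq:asymp}, the crude bounds are strictly weaker than \eqref{DtDxDyHeat} as soon as $\mathbf{x}$ or $\mathbf{y}$ lies away from the walls, and replacing $(2t)^{-\mathbf{N}/2}$ by $V(\mathbf{x},\mathbf{y},\sqrt t)^{-1}$ is the heart of the matter. The route I would follow rests on a \emph{quantitative spreading estimate} for R\"osler's measures: $\mu_{\mathbf{a}}$ should not concentrate, its mass on any $r$-ball being bounded by a proportion of the total mass that reflects how close $\mathbf{a}$ sits to the walls, together --- for the mixed derivatives --- with a companion bound on the derivatives of $\mathbf{a}\mapsto\mu_{\mathbf{a}}$ in the hull variable. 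Inserting this into $\int e^{-\rho(\mathbf{x},\mathbf{y},\eta)^2/(4t)}\,d\mu_{\mathbf{x}}(\eta)$, decomposing the $\eta$-integral into dyadic shells around the set where $\rho$ is nearly minimal, and combining with the identity $e^{-(\|\mathbf{x}\|^2+\|\mathbf{y}\|^2)/(4t)}e^{\max_{\sigma}\langle\sigma\mathbf{x},\mathbf{y}\rangle/(2t)}=e^{-d(\mathbf{x},\mathbf{y})^2/(4t)}$ (which comes from $2\max_{\sigma}\langle\sigma\mathbf{x},\mathbf{y}\rangle=\|\mathbf{x}\|^2+\|\mathbf{y}\|^2-d(\mathbf{x},\mathbf{y})^2$), converts $(2t)^{-\mathbf{N}/2}$ into $w(B(\mathbf{x},\sqrt t))^{-1}$; the symmetry $h_t(\mathbf{x},\mathbf{y})=h_t(\mathbf{y},\mathbf{x})$ then also produces the factor $w(B(\mathbf{y},\sqrt t))^{-1}$, so one obtains $\min\bigl(w(B(\mathbf{x},\sqrt t))^{-1},w(B(\mathbf{y},\sqrt t))^{-1}\bigr)=V(\mathbf{x},\mathbf{y},\sqrt t)^{-1}$, and likewise for the derivatives (the $\mathbf{y}$-derivative bound gives the $\mathbf{x}$-side, the $\mathbf{x}$-derivative bound the $\mathbf{y}$-side). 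I expect establishing the spreading estimate for $\mu_{\mathbf{a}}$ and the regularity of $\mathbf{a}\mapsto\mu_{\mathbf{a}}$ --- which is exactly where the geometry of the root system and the behaviour of the kernel near the walls genuinely enter, and which already contains the sharp Gaussian upper bound (the case $m=|\alpha|=|\beta|=0$) --- to be the main technical obstacle; with it in hand, the crude bounds, the Gaussian identity, the symmetry, and the elementary reductions of the second paragraph complete the proof of (a)--(d).
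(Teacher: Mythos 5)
First, a point of order: the paper does not prove Theorem~\ref{teo:heat} at all --- it is imported verbatim from \cite[Theorem 4.1]{ADzH} --- so there is no internal proof to compare yours against, and your attempt has to be judged on its own terms. The elementary reductions in your second paragraph are sound: deducing \eqref{Holder} from the $|\beta|=1$ case of \eqref{DtDxDyHeat} by the mean value theorem, deducing \eqref{TxiDtHeat} by splitting the reflection quotient according to $|\langle\alpha,\mathbf{x}\rangle|\gtrless\sqrt t$ and using the $G$-invariance of $d$ and of $w(B(\cdot,r))$, and the identities $\rho(\mathbf{x},\mathbf{y},\eta)^2\ge\max\bigl(d(\mathbf{x},\mathbf{y})^2,\|\mathbf{y}-\eta\|^2\bigr)$ and $2\max_\sigma\langle\sigma\mathbf{x},\mathbf{y}\rangle=\|\mathbf{x}\|^2+\|\mathbf{y}\|^2-d(\mathbf{x},\mathbf{y})^2$ are all correct, as is the crude bound $h_t(\mathbf{x},\mathbf{y})\le C(2t)^{-\mathbf{N}/2}e^{-d(\mathbf{x},\mathbf{y})^2/(4t)}$ extracted from R\"osler's formula.

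The proposal nevertheless has a genuine gap, located exactly where you place ``the heart of the matter''. The entire content of the theorem is the upgrade of $(2t)^{-\mathbf N/2}$ to $V(\mathbf{x},\mathbf{y},\sqrt t\,)^{-1}$ and the control of the classical $\mathbf{x}$-derivatives, and for both you invoke an unproven ``quantitative spreading estimate'' for R\"osler's measures $\mu_{\mathbf{a}}$ together with ``a companion bound on the derivatives of $\mathbf{a}\mapsto\mu_{\mathbf{a}}$''. You concede that this lemma ``already contains the sharp Gaussian upper bound'', i.e.\ it is at least as strong as the case $m=|\alpha|=|\beta|=0$ of the statement being proved, so asserting it is not a proof. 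Worse, the second half of the lemma is not an available tool: the representing measures $\mu_{\mathbf{a}}$ are poorly understood (in general they are not even known to be absolutely continuous), and no quantitative differentiability of $\mathbf{a}\mapsto\mu_{\mathbf{a}}$ is known; a proof routed through such a statement cannot currently be completed. The argument in \cite{ADzH} deliberately avoids differentiating the representing measure: roughly, the volume factor in the base case is extracted by playing R\"osler's product formula for translates of radial functions against the explicit expression \eqref{eq:asymp} for $w(B(\mathbf{x},r))$, and the time, space and mixed derivatives are then recovered from the base case via the holomorphy of the heat kernel in all of its variables and Cauchy estimates on polydiscs of radius comparable to $\sqrt t$, absorbing the displacement into the Gaussian. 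Your Cauchy-estimate reduction for $\partial_t^m$ is in this spirit, but as written it too rests on the missing lemma (``the proof of \eqref{DtDxDyHeat} below applies verbatim for complex time''). In short: the skeleton and the reductions are fine, but the load-bearing lemma is missing, and in the form you state it (regularity of $\mathbf{a}\mapsto\mu_{\mathbf{a}}$) it is not within reach.
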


 We finish this section by the proposition which will be used in proving the bounds of the square functions.
\begin{proposition}\label{prop:ord_implies_Dunkl}
Let $M \geq 0$ and let $\ell$ be a positive integer. Assume that $\phi \in C^{\ell}(\mathbb{R}^N)$ satisfies
\begin{equation}\label{eq:induction_ord}
    |\partial^{\beta}\phi(\mathbf{x})| \leq (1+\|\mathbf{x}\|)^{-\mathbf{N}-M} \text{ for all }|\beta| \leq \ell.
\end{equation}
There is a constant $C>0$ such that
\begin{equation}\label{eq:induction_dunkl}
    |T^{\beta}\phi(\mathbf{x})| \leq C(1+\|\mathbf{x}\|)^{-\mathbf{N}-M} \text{ for all }|\beta| \leq \ell.
\end{equation}
Moreover, if additionally  $M>0$ and $m$ is a positive integer such that $m<M$, then $\mathcal F\phi\in C^m(\mathbb R^N)$ and
\begin{equation}\label{transform_bound}
    |\partial^{\beta'}_\xi \mathcal F\phi(\xi)|\leq C_{\beta'} (1+\|\xi\|)^{-\ell} \quad \text{for all } |\beta'|\leq  m.
\end{equation}
\end{proposition}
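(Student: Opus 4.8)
The plan is to establish \eqref{eq:induction_dunkl} first, and then to deduce the smoothness and decay of $\mathcal F\phi$ from it together with Theorem \ref{teo:Roesler_Dunkl_kernel}. One would like to prove \eqref{eq:induction_dunkl} by induction on $|\beta|$, but since the functions $T^\beta\phi$ need no longer obey bounds on their \emph{ordinary} derivatives, the induction has to be run over the whole class $\mathcal C_r$ of $C^r$ functions $g$ with $\|g\|_{(r)}:=\sup_{|\delta|\le r}\sup_{\mathbf x}(1+\|\mathbf x\|)^{\mathbf N+M}|\partial^\delta g(\mathbf x)|<\infty$. Writing $T_i=\partial_i+\sum_{\alpha\in R}\tfrac{k(\alpha)}2\langle\alpha,e_i\rangle D_\alpha$, where $D_\alpha g(\mathbf x)=\dfrac{g(\mathbf x)-g(\sigma_\alpha\mathbf x)}{\langle\alpha,\mathbf x\rangle}$, everything reduces to the claim: \emph{if $g\in\mathcal C_r$ with $r\ge 1$, then $D_\alpha g\in\mathcal C_{r-1}$ and $\|D_\alpha g\|_{(r-1)}\le C\|g\|_{(r)}$.} Granting the claim, $\partial_i$ and each $D_\alpha$ map $\mathcal C_r$ into $\mathcal C_{r-1}$, so $T_i$ does too, with $\|T_ig\|_{(r-1)}\le C'\|g\|_{(r)}$; iterating $|\beta|\le\ell$ times turns the hypothesis $\|\phi\|_{(\ell)}<\infty$ into $\|T^\beta\phi\|_{(\ell-|\beta|)}\le C_\beta\|\phi\|_{(\ell)}$, which in particular yields \eqref{eq:induction_dunkl}.

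To prove the claim, put $h(\mathbf x)=g(\mathbf x)-g(\sigma_\alpha\mathbf x)$, so that $h\in C^r$ vanishes on $\{\langle\alpha,\mathbf x\rangle=0\}$; since $\|\alpha\|^2=2$ one has $\mathbf x-\sigma_\alpha\mathbf x=\langle\alpha,\mathbf x\rangle\alpha$, hence integrating along the segment from $\sigma_\alpha\mathbf x$ to $\mathbf x$ gives
\[
D_\alpha g(\mathbf x)=\int_0^1\langle\nabla g(L_s\mathbf x),\alpha\rangle\,ds,\qquad L_s\mathbf x=\mathbf x-(1-s)\langle\alpha,\mathbf x\rangle\alpha,
\]
which exhibits $D_\alpha g$ as a $C^{r-1}$ function (the integrand being $C^{r-1}$ in $\mathbf x$, uniformly in $s$), and moreover $\|L_s\mathbf x\|\le\|\mathbf x\|$ and $\sup_{s\in[0,1]}\|L_s\|_{\mathrm{op}}<\infty$. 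For the decay I split $\mathbb R^N$ by comparing $|\langle\alpha,\mathbf x\rangle|$ with $\varepsilon(1+\|\mathbf x\|)$ for a small fixed $\varepsilon>0$. In the region $|\langle\alpha,\mathbf x\rangle|\ge\varepsilon(1+\|\mathbf x\|)$ I estimate $\partial^\delta(h\cdot\langle\alpha,\cdot\rangle^{-1})$ by the Leibniz rule, using $|\partial^{\delta''}\langle\alpha,\mathbf x\rangle^{-1}|\le C|\langle\alpha,\mathbf x\rangle|^{-1-|\delta''|}\le C_\varepsilon(1+\|\mathbf x\|)^{-1}$ and $|\partial^{\delta'}h(\mathbf x)|\le C\|g\|_{(r)}(1+\|\mathbf x\|)^{-\mathbf N-M}$ (here $\|\sigma_\alpha\mathbf x\|=\|\mathbf x\|$), which even gains an extra power $(1+\|\mathbf x\|)^{-1}$. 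In the region $|\langle\alpha,\mathbf x\rangle|<\varepsilon(1+\|\mathbf x\|)$ I differentiate the integral formula instead: $\partial^\delta D_\alpha g(\mathbf x)=\int_0^1\partial^\delta_{\mathbf x}\langle\nabla g(L_s\mathbf x),\alpha\rangle\,ds$; by the chain rule the integrand is $\le C\|g\|_{(r)}(1+\|L_s\mathbf x\|)^{-\mathbf N-M}$ (using $|\delta|+1\le r$), and since $\|L_s\mathbf x\|^2\ge\|\mathbf x\|^2-\tfrac12\langle\alpha,\mathbf x\rangle^2\ge c(1+\|\mathbf x\|)^2$ for $\varepsilon$ small, the $s$-integral is $\le C\|g\|_{(r)}(1+\|\mathbf x\|)^{-\mathbf N-M}$. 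The two regions together prove the claim.

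For the statement about $\mathcal F\phi$, note that by Theorem \ref{teo:Roesler_Dunkl_kernel} one has $|\partial^{\beta'}_\xi E(-i\xi,\mathbf x)|\le\|\mathbf x\|^{|\beta'|}$, so when $|\beta'|\le m<M$ the function $\mathbf x\mapsto\|\mathbf x\|^{|\beta'|}(1+\|\mathbf x\|)^{-\mathbf N-M}$ dominates all $\xi$-derivatives up to order $m$ of the integrand in $\mathcal F\phi(\xi)=c_k^{-1}\int E(-i\xi,\mathbf x)\phi(\mathbf x)\,dw(\mathbf x)$, and is $dw$-integrable precisely because $|\beta'|<M$; hence $\mathcal F\phi\in C^m(\mathbb R^N)$ with $\partial^{\beta'}_\xi\mathcal F\phi(\xi)=c_k^{-1}\int\partial^{\beta'}_\xi E(-i\xi,\mathbf x)\,\phi(\mathbf x)\,dw(\mathbf x)$. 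To obtain the decay, use that $E(-i\xi,\cdot)$ solves $T_{i,\mathbf x}E(-i\xi,\mathbf x)=-i\xi_i E(-i\xi,\mathbf x)$, whence $\xi^\gamma E(-i\xi,\mathbf x)=i^{|\gamma|}T^\gamma_{\mathbf x}E(-i\xi,\mathbf x)$; differentiating this identity in $\xi$ and applying the Leibniz rule gives, with $G_\mu(\mathbf x,\xi):=\partial^\mu_\xi E(-i\xi,\mathbf x)$,
\[
\xi^{\gamma} G_{\beta'}(\mathbf x,\xi)=i^{|\gamma|}T^{\gamma}_{\mathbf x} G_{\beta'}(\mathbf x,\xi)-\sum_{0\ne\beta''\le\min(\beta',\gamma)}c_{\beta',\gamma,\beta''}\,\xi^{\gamma-\beta''} G_{\beta'-\beta''}(\mathbf x,\xi).
\]
Multiplying by $\phi(\mathbf x)$, integrating in $dw$, and moving $T^\gamma_{\mathbf x}$ onto $\phi$ by \eqref{eq:by_parts} (legitimate because one factor decays like $(1+\|\mathbf x\|)^{-\mathbf N-M}$, which beats the polynomial growth of $G_{\beta'}$ and of $w$ once $|\beta'|<M$), the first term on the right turns into $(-1)^{|\gamma|}i^{|\gamma|}\int G_{\beta'}(\mathbf x,\xi)T^\gamma\phi(\mathbf x)\,dw(\mathbf x)$, which by \eqref{eq:induction_dunkl} is bounded in $\xi$ by $C\int\|\mathbf x\|^{|\beta'|}(1+\|\mathbf x\|)^{-\mathbf N-M}\,dw(\mathbf x)<\infty$ for every $|\gamma|\le\ell$. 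Since the remaining terms involve only strictly smaller $|\beta'-\beta''|$ and $|\gamma-\beta''|\le\ell$, a finite induction on $|\beta'|$ shows $|\xi^\gamma\partial^{\beta'}_\xi\mathcal F\phi(\xi)|\le C_{\beta',\gamma}$ for all $|\gamma|\le\ell$ and $|\beta'|\le m$; taking $\gamma=0$ and $\gamma=\ell e_j$ with $|\xi_j|=\max_i|\xi_i|$ then yields \eqref{transform_bound}.

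The main obstacle is the claim about $D_\alpha$: the crude mean-value bound on the difference quotient $\frac{g(\mathbf x)-g(\sigma_\alpha\mathbf x)}{\langle\alpha,\mathbf x\rangle}$ fails to reproduce the decay $(1+\|\mathbf x\|)^{-\mathbf N-M}$ precisely when $\mathbf x$ is nearly parallel to $\alpha$ (the connecting segment then passes close to the origin), and it is exactly the dichotomy on the size of $|\langle\alpha,\mathbf x\rangle|$ — the quotient estimate in one regime, the segment integral in the other — that repairs this, and then propagates through the induction and through the integration by parts used for $\mathcal F\phi$. A minor but genuine point is that \eqref{eq:by_parts} is applied to functions that are merely polynomially decaying rather than Schwartz, which is where the hypothesis $m<M$ (i.e. $|\beta'|<M$) is really used; it is justified by a standard cut-off argument.
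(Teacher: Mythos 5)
Your proposal is correct and follows essentially the same route as the paper: for \eqref{eq:induction_dunkl} you reduce to a mapping property of the difference operator $D_\alpha$ and prove it via a dichotomy on $|\langle\alpha,\mathbf x\rangle|$ (Leibniz rule on the quotient in one regime, the segment-integral representation in the other), which is exactly the paper's two-case argument with a threshold $\varepsilon(1+\|\mathbf x\|)$ in place of the paper's absolute threshold $1/10$; and for \eqref{transform_bound} you combine the eigenfunction relation for $E$, Dunkl integration by parts, Theorem \ref{teo:Roesler_Dunkl_kernel}, and a finite Leibniz induction — the same chain of ideas as the paper's \eqref{eq:7A}--\eqref{eq:7B}, merely carried out at the kernel level rather than via the pre-packaged identity $\mathcal F(T_jf)=i\xi_j\mathcal Ff$.
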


\begin{proof}
The proof {of~\eqref{eq:induction_dunkl}} is by induction on $\ell$. Assume that~\eqref{eq:induction_ord} implies~\eqref{eq:induction_dunkl} for $\ell_1$. We will prove that~\eqref{eq:induction_ord} implies~\eqref{eq:induction_dunkl} for $\ell_1+1$. Let $\alpha \in R$. By the definition of $T_{j}$, it is enough to show that there is a constant $C>0$ such that the function
$$\mathbf x\longmapsto C^{-1}\frac{\phi(\mathbf{x})-\phi(\sigma_{\alpha}(\mathbf{x}))}{\langle \mathbf{x},\alpha \rangle}$$ satisfies~\eqref{eq:induction_ord} with $\ell=\ell_1$. Let $\beta \in \mathbb{N}_0^{N}$ be such that $|\beta| \leq \ell_1$. We consider two cases.
\\
\textbf{Case 1.} $|\langle \mathbf{x},\alpha \rangle| \geq {1/10}$. Note that there is a constant $C=C_{\ell_1}>0$ independent of $\mathbf{x}$ such that for all $|\beta'| \leq \ell_1$ we have
\begin{equation}\label{eq:scalar_est}
    |\partial^{\beta'}_{\mathbf{x}}(\langle \mathbf{x},\alpha \rangle^{-1})| \leq C.
\end{equation}
Thanks to the Leibniz rule, the estimate for $\partial^{\beta}\Big(\frac{\phi(\mathbf{x})-\phi(\sigma_{\alpha}(\mathbf{x}))}{\langle \mathbf{x},\alpha \rangle}\Big)$ is a consequence of~\eqref{eq:induction_ord} and~\eqref{eq:scalar_est}.
\\
\textbf{Case 2.} $|\langle \mathbf{x},\alpha \rangle| < {1/10}$. We have
\begin{equation}\label{eq:diff_argument}
\begin{split}
    \frac{\phi(\mathbf{x})-\phi(\sigma_{\alpha}(\mathbf{x}))}{\langle \mathbf{x}, \alpha \rangle}&=\langle \mathbf{x}, \alpha \rangle^{-1}\int_0^{1} \frac{d}{dt}(\phi(\mathbf{x}-2t\alpha \|\alpha\|^{-2}\langle \mathbf{x}, \alpha\rangle ))\,dt\\&=c_{\alpha} \int_0^1 \langle \nabla_{\mathbf{x}} \phi(\mathbf{x}-2t\alpha \|\alpha\|^{-2}\langle \mathbf{x}, \alpha\rangle ), \alpha \rangle \,dt.
\end{split}
\end{equation}
By the assumption $|\langle \mathbf{x},\alpha \rangle| < {1/10}$, so for all $t \in [0,1]$ we have
\begin{equation}\label{eq:diff_sim}
    \|\mathbf{x}-2t\alpha \|\alpha\|^{-2}\langle \mathbf{x}, \alpha\rangle\| \sim \|\mathbf{x}\|,
\end{equation}
so, by~\eqref{eq:induction_ord} with $\ell=\ell_1+1$ we obtain
\begin{align*}
    &\left|\partial^{\beta}_{\mathbf{x}}\Big(\frac{\phi(\mathbf{x})-\phi(\sigma_{\alpha}(\mathbf{x}))}{\langle \mathbf{x}, \alpha \rangle}\Big)\right| \leq \left|c_{\alpha} \int_0^1 \langle \partial^{\beta}_{\mathbf{x}}\nabla_{\mathbf{x}} \phi(\mathbf{x}-2t\alpha \|\alpha\|^{-2}\langle \mathbf{x}, \alpha\rangle ), \alpha \rangle \,dt\right| \\&\leq C\int_{0}^{1}\left(1+\|\mathbf{x}-2t\alpha \|\alpha\|^{-2}\langle \mathbf{x}, \alpha\rangle\|\right)^{-\mathbf{N}-M}\,dt \leq C(1+\|\mathbf{x}\|)^{-\mathbf{N}-M},
\end{align*}
{which completes the proof of \eqref{eq:induction_dunkl}.

We now turn to prove \eqref{transform_bound}. By {~\eqref{eq:E_prop} and Theorem~\ref{teo:Roesler_Dunkl_kernel} we have $  |\partial^\beta_{\xi} E(-i\mathbf{x},\xi)|\leq C\|\mathbf{x}\|^{|\beta|}$.}
   Thus, using~\eqref{eq:der_transform}, we obtain
    \begin{equation}\label{eq:7A}
        \begin{split}
      \Big|   \partial_\xi^{\beta'} \Big(\xi^\beta \mathcal F\phi (\xi)\Big) \Big| &=c_k^{-1} \Big| \int_{\mathbb{R}^N} (T^\beta \phi)(\mathbf{x})\partial_\xi^{\beta'} E(-i\mathbf{x},\xi )\, dw(\mathbf{x})\Big|\\
      &\leq C \int_{\mathbb{R}^N} (1+\|\mathbf{x}\|)^{-M-\mathbf N}(1+\|\mathbf{x}\|)^{m}\, dw(\mathbf{x})
      \leq C_{\beta',\beta}
        \end{split}
    \end{equation}
    for $|\beta'|\leq m$ and {$|\beta |\leq \ell$.}
From \eqref{eq:7A} we conclude that for $|\beta'|\leq m$ and {$|\beta |\leq \ell$} one has
\begin{equation}\label{eq:7B}
        \begin{split}
      \Big| \xi^\beta  \partial_\xi^{\beta'}  \mathcal F\phi (\xi) \Big| &
      \leq C_{\beta',\beta}'.
        \end{split}
    \end{equation}
    Now \eqref{transform_bound} can be deduced easily from \eqref{eq:7B}. }
\end{proof}

\section{Vector valued Calder\'on-Zygmund analysis in the Dunkl setting}\label{sec:CZ} The proof of main results will be based on the following straightforward adaptation of the vector valued approach to square functions in the Dunkl setting (cf.~\cite{Duo},~\cite{St1}). Since the conditions on  kernels are expressed by means of both: the Euclidean distance $\| \mathbf x-\mathbf y\|$ and the distance $d(\mathbf x,\mathbf y)$, for the convenience of the reader we present the details.

\subsection{Vector valued Calder\'on-Zygmund operators in the Dunkl setting.}
Let $\mathcal H_1$ and $\mathcal H_2$ be separable Hilbert spaces. We shall consider the vector valued $L^p(dw,\mathcal H_j)$ spaces with the norms
$$ \| f\|^p_{L^p(dw, \mathcal H_j)}=\int_{\mathbb{R}^N} \| f(\mathbf x)\|_{\mathcal H_j}^p\, dw(\mathbf x).$$
Note that $L^2(dw, \mathcal H_j)=:\mathbf H_j$ is a Hilbert space with the inner product
$$ \langle f,g\rangle_{\mathbf H_j}=\int_{\mathbb{R}^N}\langle f(\mathbf x), g(\mathbf x)\rangle_{\mathcal H_j}\, dw(\mathbf x).$$

\begin{proposition}\label{Calderon-vector}
Let $\mathcal K$ be a bounded linear operator from $L^2(dw,\mathcal H_1)$ into $L^2(dw,\mathcal H_2)$ with an associated operator valued  kernel $\mathcal K(\mathbf x,\mathbf y)\in\mathcal L(\mathcal H_1,\mathcal H_2)$ for $d(\mathbf x,\mathbf y)>0$. Assume that there are constants $C,\delta'>0$ such that  for all $\mathbf{x},\mathbf{y},\mathbf{y}' \in \mathbb{R}^N$ such that $2\| \mathbf y-\mathbf y'\|\leq d(\mathbf x,\mathbf y)$ one has
\begin{equation}\label{eq:kernel_assumption_1}
    \| \mathcal K(\mathbf x,\mathbf y)-\mathcal K(\mathbf x,\mathbf y')\|_{\mathcal L(\mathcal H_1,\mathcal H_2)}\leq \frac{C}{V(\mathbf x,\mathbf y,d(\mathbf x,\mathbf y))}\Big(\frac{\|\mathbf y-\mathbf y'\| }{d(\mathbf x,\mathbf y)}\Big)^{\delta'},
    \end{equation}
\begin{equation}\label{eq:kernel_assumption_2}
    \| \mathcal K(\mathbf y,\mathbf x)-\mathcal K(\mathbf y',\mathbf x)\|_{\mathcal L(\mathcal H_1,\mathcal H_2)}\leq \frac{C}{V(\mathbf x,\mathbf y,d(\mathbf x,\mathbf y))}\Big(\frac{\|\mathbf y-\mathbf y'\| }{d(\mathbf x,\mathbf y)}\Big)^{\delta'}.
\end{equation}
Then, for every $1<p<\infty$, the operator $\mathcal K$, initially defined on $L^p(dw,\mathcal H_1)\cap L^2(dw,\mathcal H_1)$, has a unique extension to a bounded operator from  $L^p(dw,\mathcal H_1)$ to $L^p(dw,\mathcal H_2)$.
\end{proposition}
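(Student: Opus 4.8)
The plan is to run the Calder\'on--Zygmund machinery on the space of homogeneous type $(\mathbb R^N,\|\cdot\|,dw)$ --- recall that $dw$ is doubling for \emph{Euclidean} balls by \eqref{eq:doubling} --- proving first a weak type $(1,1)$ bound for $\mathcal K$ on $L^1(dw,\mathcal H_1)\cap L^2(dw,\mathcal H_1)$, then obtaining $L^p$ for $1<p\le 2$ by Marcinkiewicz interpolation with the assumed $L^2$ bound, and finally passing to $2<p<\infty$ by duality. For the weak $(1,1)$ bound, fix $f\in L^1(dw,\mathcal H_1)\cap L^2(dw,\mathcal H_1)$ and $\lambda>0$, and apply the Calder\'on--Zygmund decomposition (with respect to Euclidean balls) to the scalar function $\mathbf x\mapsto\|f(\mathbf x)\|_{\mathcal H_1}$ at height $\lambda$: this yields $f=g+b$, $b=\sum_jb_j$, with $b_j$ supported in balls $B_j=B(\mathbf x_j,r_j)$, $\int_{\mathbb R^N}b_j\,dw=0$, $\|b_j\|_{L^1(dw,\mathcal H_1)}\lesssim\lambda\,w(B_j)$, $\sum_jw(B_j)\lesssim\lambda^{-1}\|f\|_{L^1(dw,\mathcal H_1)}$, $\|g\|_{L^\infty(dw,\mathcal H_1)}\lesssim\lambda$, and $\|g\|_{L^2(dw,\mathcal H_1)}^2\lesssim\lambda\|f\|_{L^1(dw,\mathcal H_1)}$. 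The part $g\in L^2$ is disposed of by Chebyshev's inequality and the $L^2$ boundedness of $\mathcal K$. The exceptional set is replaced by its $G$-orbit enlargement $E^\ast=\bigcup_j\bigcup_{\sigma\in G}\sigma(3B_j)$, for which $w(E^\ast)\le|G|\sum_jw(3B_j)\lesssim\lambda^{-1}\|f\|_{L^1(dw,\mathcal H_1)}$ because $w$ is $G$-invariant and doubling.

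The crux is to bound $\mathcal Kb$ outside $E^\ast$. For $\mathbf x\notin E^\ast$ we have $d(\mathbf x,\mathbf x_j)\ge 3r_j$ for every $j$, so $\mathbf x\notin\operatorname{supp}b_j$ and, using $\int b_j\,dw=0$ and the representation of $\mathcal K$ through its kernel off $\operatorname{supp}b_j$,
\begin{align*}
\int_{\mathbb R^N\setminus E^\ast}\|\mathcal Kb(\mathbf x)\|_{\mathcal H_2}\,dw(\mathbf x)\le\sum_j\int_{B_j}\|b_j(\mathbf y)\|_{\mathcal H_1}\Bigl(\int_{\{d(\mathbf x,\mathbf x_j)\ge 3r_j\}}\|\mathcal K(\mathbf x,\mathbf y)-\mathcal K(\mathbf x,\mathbf x_j)\|_{\mathcal L(\mathcal H_1,\mathcal H_2)}\,dw(\mathbf x)\Bigr)\,dw(\mathbf y),
\end{align*}
so it suffices to bound the inner integral by an absolute constant, uniformly in $j$ and in $\mathbf y\in B_j$. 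Here the mismatch between the Euclidean increment $\|\mathbf y-\mathbf x_j\|\le r_j$ and the distance $d$ occurring in \eqref{eq:kernel_assumption_1} is reconciled through the triangle inequality for $d$ and the inequality $d(\cdot,\cdot)\le\|\cdot-\cdot\|$: for $\mathbf y\in B_j$ and $d(\mathbf x,\mathbf x_j)\ge 3r_j$ one gets $d(\mathbf x,\mathbf y)\sim d(\mathbf x,\mathbf x_j)$ and $2\|\mathbf y-\mathbf x_j\|\le 2r_j\le d(\mathbf x,\mathbf y)$, so \eqref{eq:kernel_assumption_1} applies with $\mathbf y'=\mathbf x_j$ and gives
\begin{align*}
\|\mathcal K(\mathbf x,\mathbf y)-\mathcal K(\mathbf x,\mathbf x_j)\|_{\mathcal L(\mathcal H_1,\mathcal H_2)}\lesssim\frac{1}{w(B(\mathbf x,d(\mathbf x,\mathbf x_j)))}\Bigl(\frac{r_j}{d(\mathbf x,\mathbf x_j)}\Bigr)^{\delta'}.
\end{align*}
I would then split $\{d(\mathbf x,\mathbf x_j)\ge 3r_j\}$ into the dyadic shells $\{2^nr_j\le d(\mathbf x,\mathbf x_j)<2^{n+1}r_j\}$, $n\ge 1$: each shell lies in $\bigcup_{\sigma\in G}B(\sigma\mathbf x_j,2^{n+1}r_j)$, hence has measure $\lesssim|G|\,w(B(\mathbf x_j,2^{n+1}r_j))$, and on it $w(B(\mathbf x,d(\mathbf x,\mathbf x_j)))\gtrsim w(B(\mathbf x_j,2^nr_j))$ (each of its points is within $2^{n+1}r_j$ of some $\sigma\mathbf x_j$, plus $G$-invariance and \eqref{eq:doubling}). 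Summing the geometric series $\sum_{n\ge 1}2^{-n\delta'}\,w(B(\mathbf x_j,2^{n+1}r_j))/w(B(\mathbf x_j,2^nr_j))\lesssim\sum_{n\ge 1}2^{-n\delta'}<\infty$ gives the desired uniform bound; together with Chebyshev's inequality this yields $w(\{\|\mathcal Kf\|_{\mathcal H_2}>\lambda\})\lesssim\lambda^{-1}\|f\|_{L^1(dw,\mathcal H_1)}$.

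Finally, Marcinkiewicz interpolation between this weak $(1,1)$ estimate and the assumed $L^2$ bound gives $\|\mathcal Kf\|_{L^p(dw,\mathcal H_2)}\lesssim_p\|f\|_{L^p(dw,\mathcal H_1)}$ for all $f\in L^p(dw,\mathcal H_1)\cap L^2(dw,\mathcal H_1)$ when $1<p\le 2$, and since this subspace is dense in $L^p(dw,\mathcal H_1)$ the extension is bounded and unique. For $2<p<\infty$ I would pass to the adjoint $\mathcal K^\ast\colon L^2(dw,\mathcal H_2)\to L^2(dw,\mathcal H_1)$, whose kernel is $\mathcal K^\ast(\mathbf x,\mathbf y)=\mathcal K(\mathbf y,\mathbf x)^\ast$ and which satisfies \eqref{eq:kernel_assumption_1} and \eqref{eq:kernel_assumption_2} with their roles interchanged; by the case already handled, $\mathcal K^\ast$ is bounded on $L^{p'}$ for $1<p'<2$, and duality returns the $L^p$ bound for $\mathcal K$. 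The one genuinely non-routine point --- worth isolating as a lemma --- is the geometric bookkeeping of the middle paragraph: the kernel smoothness is phrased through the Dunkl distance $d$, while the Calder\'on--Zygmund balls and the increment $\mathbf y-\mathbf x_j$ live in the Euclidean metric, and the two are reconciled by enlarging the exceptional set along $G$-orbits and exploiting $d\le\|\cdot\|$ together with the $G$-invariance and doubling of $w$.
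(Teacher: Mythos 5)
Your proposal is correct and follows essentially the same route as the paper: a Calder\'on--Zygmund decomposition for the doubling measure $dw$, an exceptional set enlarged along $G$-orbits, the cancellation of the bad parts combined with the H\"older condition \eqref{eq:kernel_assumption_1}, Marcinkiewicz interpolation for $1<p\le 2$, and duality via \eqref{eq:kernel_assumption_2} for $p>2$. The only substantive difference is that you spell out, via dyadic shells in the metric $d$ and the $G$-invariance and doubling of $w$, the uniform bound on $\int_{(\mathcal O(Q_j^*))^c} V(\mathbf x,\mathbf y,d(\mathbf x,\mathbf y))^{-1}(\|\mathbf y-\mathbf y_j\|/d(\mathbf x,\mathbf y))^{\delta'}\,dw(\mathbf x)$, which the paper asserts without proof.
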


\begin{proof}
First we prove the weak type $(1,1)$ estimate. Consider $f\in L^1(dw,\mathcal H_1)$. Fix $\lambda>0$. We denote by $\mathcal Q_\lambda$ the collection of all maximal (disjoint)  dyadic cubes $Q_j$ in $\mathbb R^N$ satisfying
 \begin{equation}\label{eq:CZ_decomp0} \lambda< \frac{1}{w(Q_j)}\int_{Q_j}\|f(\mathbf{x})\|_{\mathcal{H}_1}\,dw(\mathbf{x}).
 \end{equation}
 Then, thanks to~\eqref{eq:doubling}, we have
 \begin{equation}\label{eq:CZ_decom_1}
     \frac{1}{w(Q_j)} \int_{Q_j}\|f(\mathbf{x})\|_{\mathcal{H}_1}\,dw(\mathbf{x})\leq C_1\lambda.
 \end{equation}
Let
\begin{equation*}\begin{split}
& f(\mathbf x)=\mathbf{g}(\mathbf x)+\sum_{Q_{j} \in \mathcal{Q}_{\lambda}} (f(\mathbf x)-f_{Q_j})\chi_{Q_j}(\mathbf x)=\mathbf{g}(\mathbf x)+\sum_{Q_{j} \in \mathcal{Q}_{\lambda}} \mathbf{b}_j(\mathbf{x})= \mathbf{g}(\mathbf x)+\mathbf{b}(\mathbf x),\\
&f_{Q_j}=\frac{1}{w(Q_j)}\int_{Q_j} f(\mathbf x)\, dw(\mathbf x),
\end{split}
\end{equation*}
be the Calder\'on-Zygmund decomposition at the level $\lambda$.
Let $Q_j^{*}$ be the cube which has the same center $\mathbf y_j$ as $Q_j$ but whose diameter is expanded by the factor 2. Set $\Omega =\bigcup\limits_{Q_{j} \in \mathcal{Q}_{\lambda}} \mathcal O(Q_j^*)$, where, for a Lebesgue measurable set $U\subset\mathbb R^N$, we denote $\mathcal O(U)=\{\sigma (\mathbf x): \mathbf x\in U,\ \sigma\in G\}$. Then by~\eqref{eq:doubling} and~\eqref{eq:CZ_decomp0}, we have
$$ w(\Omega )\leq \sum_{Q_{j} \in \mathcal{Q}_{\lambda}} w(\mathcal O(Q_j^*))\leq C\sum_{Q_{j} \in \mathcal{Q}_{\lambda}} w(Q_j)\leq  C \lambda^{-1}\| f\|_{L^1(dw,\mathcal H_1)}.
$$
Clearly,
$$ \{\mathbf x\in \Omega^c: \| \mathcal Kf(\mathbf x)\|_{\mathcal H_2}>\lambda\}\subseteq \{\mathbf x\in \Omega^c: \| \mathcal K\mathbf{g}(\mathbf x)\|_{\mathcal H_2}>\lambda/2 \}\cup \{\mathbf x\in \Omega^c: \| \mathcal K\mathbf{b}(\mathbf x)\|_{\mathcal H_2}>\lambda/2\}.$$
By the boundendess of $\mathcal K$ from $L^2(dw,\mathcal H_1)$ to $L^2(dw,\mathcal H_2)$, we get
\begin{equation}\label{eq:good_function}
    w( \{\mathbf x\in \Omega^c: \| \mathcal K\mathbf{g}(\mathbf x)\|_{\mathcal H_2}>\lambda/2 \} )\leq C \frac{\| \mathbf{g}\|_{L^2(dw,\mathcal H_1)}^2}{ \lambda^2}\leq C \frac{\lambda   \| f\|_{L^1(dw,\mathcal H_1)}}{\lambda^2}.
\end{equation}
In order to estimate the  measure of the second term we recall that  $\int_{\mathbb{R}^N}\mathbf b_j\, dw=0$  and  write
\begin{equation}\label{eq:CZ_decomp_split}
\begin{split}
   &w(\{\mathbf x\in \Omega^c: \| \mathcal K\mathbf{b}(\mathbf x)\|_{\mathcal H_2}>\lambda/2\}) \leq C\lambda^{-1}\|\chi_{\Omega^{c}}\mathcal{K}\mathbf{b}\|_{L^1(dw,\mathcal{H}_2)}\\
   &=C\lambda^{-1}\int_{\Omega^{c}}\left\|\sum_{Q_{j}\in \mathcal{Q}_{\lambda}}\int_{\mathbb{R}^{N}}\mathcal{K}(\mathbf{x},\mathbf{y})\mathbf{b}_{j}(\mathbf{y})\,dw(\mathbf{y})\right\|_{\mathcal{H}_2}\,dw(\mathbf{x})\\
   &=C\int_{\Omega^{c}}\left\|\sum_{Q_{j}\in \mathcal{Q}_{\lambda}}\int_{\mathbb{R}^{N}}[\mathcal{K}(\mathbf{x},\mathbf{y})-\mathcal{K}(\mathbf{x},\mathbf{y}_{j})]\mathbf{b}_{j}(\mathbf{y})\,dw(\mathbf{y})\right\|_{\mathcal{H}_2}\,dw(\mathbf{x}) \\&\leq C\sum_{Q_{j} \in \mathcal{Q}_{\lambda}}\int_{\Omega^{c}}\int_{Q_{j}}\| \mathcal K(\mathbf x,\mathbf y)-\mathcal K(\mathbf x,\mathbf y_j)\|_{\mathcal L(\mathcal H_1,\mathcal H_2)}\|\mathbf{b}_{j}(\mathbf{y})\|_{\mathcal{H}_1}\,dw(\mathbf{y})\,dw(\mathbf{x}) \\&\leq \sum_{Q_{j} \in \mathcal{Q}_{\lambda}}\int_{Q_{j}}\|{b}_{j}(\mathbf{y})\|_{\mathcal{H}_1}\int_{(\mathcal{O}(Q_{j}^{*}))^{c}}\frac{C}{V(\mathbf x,\mathbf y,d(\mathbf x,\mathbf y))}\Big(\frac{\|\mathbf y-\mathbf y_{j}\| }{d(\mathbf x,\mathbf y)}\Big)^{\delta'}\,dw(\mathbf{x})\,dw(\mathbf{y}).
\end{split}
\end{equation}
Clearly, there is a constant $C>0$ such that for all $\mathbf{y} \in Q_{j} \in \mathcal{Q}_{\lambda}$ we have
\begin{align*}
    \int_{(\mathcal{O}(Q_{j}^{*}))^{c}}\frac{1}{V(\mathbf x,\mathbf y,d(\mathbf x,\mathbf y))}\Big(\frac{\|\mathbf y-\mathbf y'_j\|}{d(\mathbf x,\mathbf y)}\Big)^{\delta'}\,dw(\mathbf{x}) \leq C,
\end{align*}
so, by~\eqref{eq:CZ_decomp_split}, we get
\begin{equation}\label{eq:bad_function}
    w(\{\mathbf x\in \Omega^c: \| \mathcal K\mathbf{b}(\mathbf x)\|_{\mathcal H_2}>\lambda/2\}) \leq \frac{C}{\lambda}\sum_{Q_{j} \in \mathcal{Q}_{\lambda}}\int_{\mathbb{R}^{N}}\|\mathbf{b}_{j}(\mathbf{y})\|_{\mathcal{H}_1}\,dw(\mathbf{y}) \leq \frac{C}{\lambda}\|f\|_{L^1(dw,\mathcal{H}_1)}.
\end{equation}
By~\eqref{eq:good_function} and~\eqref{eq:bad_function} we obtain that $\mathcal{K}$ is of weak type $(1,1)$. Thanks to the vector-valued version of the Marcinkiewicz interpolation theorem (see e.g.~\cite[Exercise 5.5.3]{Grafakos_classical}) we obtain the claim for $1<p \leq 2$.

To prove the bounds for $2<p<\infty$, we apply the well-known  duality argument. Observe that $\mathcal K^*$ is a bounded operator from the Hilbert space $L^2(dw,\mathcal H_2)$ to the Hilbert space  $L^2(dw,\mathcal H_1)$ with the associated kernel $\mathcal K^*(\mathbf x,\mathbf y)=\mathcal K(\mathbf y,\mathbf x)^*\in\mathcal L(\mathcal H_2,\mathcal H_1)$.  Hence,
 $ \| \mathcal K^*(\mathbf x,\mathbf y)\|_{\mathcal L(\mathcal H_2,\mathcal H_1)}=  \| \mathcal K(\mathbf y,\mathbf x)\|_{\mathcal L(\mathcal H_1,\mathcal H_2)}<\infty$ for $d(\mathbf x,\mathbf y)>0$ and, if  $ {2} \| \mathbf y-\mathbf y'\|\leq d(\mathbf x,\mathbf y)$, then, by \eqref{eq:kernel_assumption_2},
\begin{equation}\begin{split}\label{eq:kernel_assumption_3}
    \| \mathcal K^*(\mathbf x,\mathbf y)-\mathcal K^*(\mathbf x,\mathbf y')\|_{\mathcal L(\mathcal H_2,\mathcal H_1)}
    &=  \| \mathcal K(\mathbf y,\mathbf x)-\mathcal K(\mathbf y',\mathbf x)\|_{\mathcal L(\mathcal H_1,\mathcal H_2)}\\
    &\leq \frac{C}{V(\mathbf x,\mathbf y,d(\mathbf x,\mathbf y))}\Big(\frac{\|\mathbf y-\mathbf y'\| }{d(\mathbf x,\mathbf y)}\Big)^{\delta'}.
\end{split}\end{equation}
Let $\frac{1}{p}+\frac{1}{p'}=1$. Note that $1<p'<2$. Consequently, from the first part of the proof we conclude that there is a constant $C_{p'}>0$ such that for all $g \in L^{p'}(dw,\mathcal{H}_2)$ we have
\begin{equation}\label{eq:conjugate} \| \mathcal K^* g\|_{L^{p'}(dw,\mathcal H_1)}\leq C_{p'}\| g\|_{L^{p'}(dw,\mathcal H_2)}.
\end{equation}
Now, for $f\in L^p(dw,\mathcal H_1)\cap  L^2(dw,\mathcal H_1)$, $2<p<\infty$, we write
\begin{equation}
    \begin{split}
        \| \mathcal Kf\|_{L^p(dw,\mathcal H_2)}&=\sup_{\stackrel{g\in L^{p'}(dw,\mathcal H_2)\cap  L^2(dw,\mathcal H_2)\; } {\|g\|_{L^{p'}(dw,\mathcal{H}_2)}=1}}\Big| \int_{\mathbb{R}^N}\langle \mathcal Kf(\mathbf x),g(\mathbf x)\rangle_{\mathcal H_2}\, dw(\mathbf x)\Big|\\
        &=\sup_{\stackrel{g\in L^{p'}(dw,\mathcal H_2)\cap  L^2(dw,\mathcal H_2)}{\|g\|_{L^{p'}(dw,\mathcal{H}_2)}=1}} \Big| \int_{\mathbb{R}^N}\langle f(\mathbf x),\mathcal K^* g(\mathbf x)\rangle_{\mathcal H_1}\, dw(\mathbf x)
        \Big|\\
        &\leq \sup_{\stackrel{g\in L^{p'}(dw,\mathcal H_2)\cap  L^2(dw,\mathcal H_2)}{\|g\|_{L^{p'}(dw,\mathcal{H}_2)}=1}} \int_{\mathbb{R}^N}\| f(\mathbf x)\|_{\mathcal H_1}\|\mathcal K^* g(\mathbf x)\|_{\mathcal H_1} \, dw(\mathbf x)\\
        &\leq \sup_{\stackrel{g\in L^{p'}(dw,\mathcal H_2)\cap  L^2(dw,\mathcal H_2)}{\|g\|_{L^{p'}(dw,\mathcal{H}_2)}=1}} \| f\|_{L^p(dw,\mathcal H_1)} \| \mathcal K^*g\|_{L^{p'}(dw,\mathcal H_1)}\\
        &\leq C_{p'}\| f\|_{L^p(dw,\mathcal H_1)},
    \end{split}
\end{equation}
 where in the last inequality we have used \eqref{eq:conjugate}.
\end{proof}
\subsection{Vector valued approach to square functions.} For further applications we shall use $\mathcal H_1=\mathbb C$ and $\mathcal H_2=L^2((0,\infty),\frac{dt}{t})$.
Let $K(t,\mathbf x,\mathbf y)$ be a measurable function on $(0,\infty)\times \mathbb R^N\times\mathbb R^N$ such that for certain $M'>\mathbf N$ one has
\begin{equation}\label{eq:estimate_K11}| K(t,\mathbf x,\mathbf y)|\leq C \mathcal V(\mathbf x,\mathbf y, t)^{-1} \Big(1+\frac{d(\mathbf x,\mathbf y)}{t}\Big)^{-M'}.
\end{equation}
We additionally assume that there are  constants $ \delta>0$ and $C>0$ such that for all $\mathbf x,\mathbf y, \mathbf y' \in \mathbb{R}^N$ and $t>0$, if $ \| \mathbf y-\mathbf y'\|\leq t$, then
\begin{equation}\label{eq:Holder_K22} |K(t,\mathbf x,\mathbf y)-K(t,\mathbf x,\mathbf y')|\leq C \Big(\frac{\| \mathbf y-\mathbf y'\|}{t}\Big)^\delta\mathcal V(\mathbf x,\mathbf y,t)^{-1} \Big(1+\frac{d(\mathbf x,\mathbf y)}{t}\Big)^{-M'-\delta},
\end{equation}
\begin{equation}\label{eq:Holder_K33} |K(t,\mathbf y,\mathbf x)-K(t,\mathbf y',\mathbf x)|\leq C  \Big(\frac{\| \mathbf y-\mathbf y'\|}{t}\Big)^\delta \mathcal V(\mathbf x,\mathbf y,t)^{-1} \Big(1+\frac{d(\mathbf x,\mathbf y)}{t}\Big)^{-M'-\delta}.
\end{equation}
Let
$$K_tf(\mathbf x)=\int_{\mathbb{R}^N}K(t,\mathbf x,\mathbf y)f(\mathbf y)\, dw(\mathbf y). $$
We define the square function $S_K$ associated with the kernel $K(t,\mathbf{x},\mathbf{y})$ by
$$ S_K(f)(\mathbf x)=\Big(\int_0^\infty |K_tf(\mathbf x)|^2\frac{dt}{t}\Big)^{1/2}.$$ \begin{theorem}\label{teo:square-vector}
Assume that an integral kernel $K(t,\mathbf x,\mathbf y)$ satisfies \eqref{eq:estimate_K11}-\eqref{eq:Holder_K33} with certain $M'>\mathbf N$ and  $\delta>0$, and the associated square function $S_K$ is bounded from $L^2(dw)$ into itself. Then for every $1<p<\infty$ there is a constant $C_p>0$ such that
\begin{equation}
    \| S_K(f)\|_{L^p(dw)}\leq C_p\| f\|_{L^p(dw)} \quad \text{\rm for all } f\in L^p(dw).
\end{equation}
\end{theorem}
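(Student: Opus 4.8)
The plan is to recognize $S_K$ as the pointwise $\mathcal H_2$-norm of a vector valued Calder\'on--Zygmund operator and then to invoke Proposition~\ref{Calderon-vector}. Concretely, take $\mathcal H_1=\mathbb C$ and $\mathcal H_2=L^2\big((0,\infty),\tfrac{dt}{t}\big)$, and let $\mathcal Kf(\mathbf x)$ be the function $t\mapsto K_tf(\mathbf x)$. Then $\|\mathcal Kf(\mathbf x)\|_{\mathcal H_2}=S_K(f)(\mathbf x)$, so $\|\mathcal Kf\|_{L^p(dw,\mathcal H_2)}=\|S_K(f)\|_{L^p(dw)}$; in particular, the standing hypothesis that $S_K$ is bounded on $L^2(dw)$ says precisely that $\mathcal K\colon L^2(dw,\mathcal H_1)\to L^2(dw,\mathcal H_2)$ is bounded. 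Its operator valued kernel is the $\mathcal H_2$-valued function $\mathcal K(\mathbf x,\mathbf y)=\big(t\mapsto K(t,\mathbf x,\mathbf y)\big)\in\mathcal L(\mathbb C,\mathcal H_2)$, with $\|\mathcal K(\mathbf x,\mathbf y)\|_{\mathcal L(\mathbb C,\mathcal H_2)}=\big(\int_0^\infty|K(t,\mathbf x,\mathbf y)|^2\tfrac{dt}{t}\big)^{1/2}$; that $\mathcal Kf(\mathbf x)=\int_{\mathbb R^N}\mathcal K(\mathbf x,\mathbf y)f(\mathbf y)\,dw(\mathbf y)$ for $\mathbf x$ outside the $G$-orbit of $\supp f$ is a routine Fubini argument, legitimate once the $t$-square-integrability below is checked.

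Thus everything reduces to verifying, for $\mathcal K(\mathbf x,\mathbf y)$, the two regularity bounds \eqref{eq:kernel_assumption_1} and \eqref{eq:kernel_assumption_2} of Proposition~\ref{Calderon-vector}; by the symmetric roles of \eqref{eq:Holder_K22} and \eqref{eq:Holder_K33} it suffices to prove, whenever $2\|\mathbf y-\mathbf y'\|\le d(\mathbf x,\mathbf y)$, the estimate
$$\Big(\int_0^\infty\big|K(t,\mathbf x,\mathbf y)-K(t,\mathbf x,\mathbf y')\big|^2\,\tfrac{dt}{t}\Big)^{1/2}\le \frac{C}{V(\mathbf x,\mathbf y,d(\mathbf x,\mathbf y))}\Big(\frac{\|\mathbf y-\mathbf y'\|}{d(\mathbf x,\mathbf y)}\Big)^{\delta'},\qquad \delta'=\min(\delta,\,M'-\mathbf N)>0.$$
Write $d=d(\mathbf x,\mathbf y)$, $r=\|\mathbf y-\mathbf y'\|\le d/2$, $W=w(B(\mathbf x,d))$; by \eqref{eq:ball_ball}, $W\sim V(\mathbf x,\mathbf y,d)\sim\mathcal V(\mathbf x,\mathbf y,d)$, and by \eqref{eq:growth} one has $\mathcal V(\mathbf x,\mathbf y,t)^{-1}\le C(d/t)^{\mathbf N}W^{-1}$ for $0<t\le d$ and $\mathcal V(\mathbf x,\mathbf y,t)^{-1}\le C(d/t)^{N}W^{-1}$ for $t\ge d$ (similar relations holding with $\mathbf y$ replaced by $\mathbf y'$, since $r\le d/2$ forces $d(\mathbf x,\mathbf y')\sim d$ and $w(B(\mathbf y',d(\mathbf x,\mathbf y')))\sim W$).

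Now split $\int_0^\infty=\int_0^r+\int_r^\infty$. On $(0,r)$ one estimates each of $|K(t,\mathbf x,\mathbf y)|$ and $|K(t,\mathbf x,\mathbf y')|$ by the size bound \eqref{eq:estimate_K11}, uses $t\le r\le d/2$ to get $(1+d/t)^{-M'}\le(t/d)^{M'}$, and obtains an integrand $\le CW^{-2}(t/d)^{2M'-2\mathbf N}\tfrac{dt}{t}$; this is integrable at $0$ because $M'>\mathbf N$ and yields $\le CW^{-2}(r/d)^{2M'-2\mathbf N}$. On $(r,\infty)$ the Hölder bound \eqref{eq:Holder_K22} is available ($r\le t$); split further at $t=d$. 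For $r\le t\le d$, $(1+d/t)^{-M'-\delta}\le(t/d)^{M'+\delta}$ gives an integrand $\le CW^{-2}(r/t)^{2\delta}(t/d)^{2M'+2\delta-2\mathbf N}\tfrac{dt}{t}$, whose integral (dominated by its value near $t=d$, as $M'>\mathbf N$) is $\le CW^{-2}(r/d)^{2\delta}$; for $t\ge d$, drop $(1+d/t)^{-M'-\delta}\le 1$, whence an integrand $\le CW^{-2}(r/t)^{2\delta}(d/t)^{2N}\tfrac{dt}{t}$, integrable at $\infty$ and again $\le CW^{-2}(r/d)^{2\delta}$. Summing the three pieces, taking square roots, and using $r/d\le 1/2$ produces the displayed bound with $\delta'=\min(\delta,M'-\mathbf N)$; the same computation using \eqref{eq:estimate_K11} alone shows $\|\mathcal K(\mathbf x,\mathbf y)\|_{\mathcal L(\mathbb C,\mathcal H_2)}\le CW^{-1}<\infty$ for $d>0$, which justifies the Fubini step in the first paragraph. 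With both hypotheses of Proposition~\ref{Calderon-vector} checked, that proposition yields, for every $1<p<\infty$, a bounded extension $\mathcal K\colon L^p(dw,\mathbb C)\to L^p(dw,\mathcal H_2)$, i.e. $\|S_K(f)\|_{L^p(dw)}\le C_p\|f\|_{L^p(dw)}$.

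I expect the only genuinely delicate part to be the bookkeeping of the $t$-integral: one must ensure convergence both as $t\to 0^{+}$ — which is exactly where the hypothesis $M'>\mathbf N$ enters — and as $t\to\infty$ — where the lower exponent $N$ in \eqref{eq:growth}, together with $\delta>0$, supplies the decay — and one must absorb the mismatch between the scale $t$ occurring in $\mathcal V(\mathbf x,\mathbf y,t)$ and the scale $d$ occurring in $V(\mathbf x,\mathbf y,d)$ through the polynomial factors coming from \eqref{eq:growth}. Everything else is the standard vector valued Littlewood--Paley argument.
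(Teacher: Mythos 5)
Your proof is correct and takes essentially the same route as the paper: recast $S_K$ as the $\mathcal H_2$-norm of the vector valued operator $\mathcal K$ with $\mathcal H_1=\mathbb C$, $\mathcal H_2=L^2((0,\infty),\tfrac{dt}{t})$, verify $\|\mathcal K(\mathbf x,\mathbf y)\|_{\mathcal L(\mathcal H_1,\mathcal H_2)}\lesssim w(B(\mathbf x,d(\mathbf x,\mathbf y)))^{-1}$ and the two Hölder conditions \eqref{eq:kernel_assumption_1}--\eqref{eq:kernel_assumption_2} with $\delta'=\min(\delta,M'-\mathbf N)$ by splitting the $t$-integral at $\|\mathbf y-\mathbf y'\|$ and using \eqref{eq:growth} on the two scales, and then invoke Proposition~\ref{Calderon-vector}. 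The only cosmetic difference is that you split further at $t=d(\mathbf x,\mathbf y)$ to handle the large-$t$ regime, while the paper packs both regimes into the single factor $(t^{-\mathbf N}+t^{-N})^2$; the estimates and conclusion are identical.
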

\begin{proof}
In order to prove   Theorem \ref{teo:square-vector}  we  note that the boundedness  of the square function $S_K$ on $L^p(dw)$-spaces is equivalent to the boundedness of the operator $\mathcal K$ from $L^p(dw,\mathcal H_1)$ to $L^p(dw,\mathcal H_2)$, $\mathcal H_1=\mathbb C$,  $\mathcal H_2=L^2((0,\infty),\,\frac{dt}{t})$, where $\mathcal K$ is defined by
$$ \mathcal Kf(\mathbf x)(t)=\int_{\mathbb{R}^N}K_t(\mathbf x,\mathbf y)f(\mathbf y)\, dw(\mathbf y).$$
Recall that the boundedness  of $\mathcal K$ from $L^2(dw,\mathcal H_1)$ to $L^2(dw,\mathcal H_2)$ is guaranteed  by one of the assumptions. Therefore,  to finish the proof, it suffices to check that the associated kernel $\mathcal K(\mathbf x,\mathbf y)=K(t,\mathbf x,\mathbf y)$ belongs to  $\mathcal L(\mathcal H_1,\mathcal H_2)$ for $d(\mathbf x,\mathbf y)>0$ and  satisfies \eqref{eq:kernel_assumption_1} and  \eqref{eq:kernel_assumption_2} with $\delta'=\min (\delta, M'-\mathbf N)>0$, and then apply Proposition \ref{Calderon-vector}.
{To verify the first requirement we shall prove    that there is a constant $C>0$ such that for $d(\mathbf x,\mathbf y)>0$ one has
\begin{equation}\label{mathcalK_bound}\|\mathcal K(\mathbf x,\mathbf y)\|_{\mathcal L(\mathcal H_1,\mathcal H_2)}\leq Cw(B(\mathbf x,d(\mathbf x,\mathbf y)))^{-1}.
\end{equation}
By \eqref{eq:estimate_K11},
\begin{equation}\begin{split}
    \| \mathcal K(\mathbf x,\mathbf y)\|_{\mathcal L(\mathcal H_1,\mathcal H_2)}^2&
    \leq C^2 \int_0^\infty \frac{1}{\mathcal V(\mathbf x,\mathbf y,t)^2}\Big(1+\frac{d(\mathbf x,\mathbf y)}{t}\Big)^{-2M'}\frac{dt}{t}\\
    &=C^2 \int_0^\infty \frac{1}{\mathcal V(\mathbf x,\mathbf y, td(\mathbf x,\mathbf y))^2}\Big(1+\frac{1}{t}\Big)^{-2M'}\frac{dt}{t}\\
    &=\frac{C^2}{\mathcal V(\mathbf x,\mathbf y,d(\mathbf x,\mathbf y))^2} \int_0^\infty \frac{\mathcal V(\mathbf x,\mathbf y,d(\mathbf x,\mathbf y)))^2}{\mathcal V(\mathbf x,\mathbf y,td(\mathbf x,\mathbf y)))^2}\Big(1+\frac{1}{t}\Big)^{-2M'}\frac{dt}{t}
\end{split}
\end{equation}
By~\eqref{eq:ball_ball} we have $\mathcal V(\mathbf x,\mathbf y,d(\mathbf x,\mathbf y))\sim w(B(\mathbf x,d(\mathbf x,\mathbf y)))$. Applying the second  inequality of \eqref{eq:growth} for $0<t<1$ and the first  one for $t \geq 1$ we get the claim.}

We  now turn to verify \eqref{eq:kernel_assumption_1}.
 Note that $d(\mathbf x,\mathbf y){ \sim d(\mathbf x,\mathbf y')}$ for $d(\mathbf x,\mathbf y)>2\| \mathbf y-\mathbf y'\|$.  Hence, by~\eqref{eq:estimate_K11} and~\eqref{eq:Holder_K22} we have
\begin{equation*}
    \begin{split}
         \| \mathcal K&(\mathbf x,\mathbf y)-\mathcal K(\mathbf x,\mathbf y')\|^2_{\mathcal L(\mathcal H_1,\mathcal H_2)} \leq C \int_0^{\| \mathbf y-\mathbf y'\|}  \mathcal V(\mathbf x,\mathbf y, t)^{-2} \Big(1+\frac{d(\mathbf x,\mathbf y)}{t}\Big)^{-2M'} \frac{dt}{t}\\
         &+ C\int_{\|\mathbf y-\mathbf y'\|}^\infty  \mathcal V(\mathbf x,\mathbf y, t)^{-2}\Big(\frac{\|\mathbf y-\mathbf y'\|}{t}\Big)^{2\delta}\Big(1+\frac{d(\mathbf x,\mathbf y)}{t}\Big)^{-2M'-2\delta} \frac{dt}{t}\\
        & \leq C \int_0^{\| \mathbf y-\mathbf y'\|/d(\mathbf x,\mathbf y)}  \mathcal V(\mathbf x, \mathbf y, td(\mathbf x,\mathbf y))^{-2} \Big(1+\frac{1}{t}\Big)^{-2M'} \frac{dt}{t}\\
        &+ C\int_{\| \mathbf y-\mathbf y'\|/d(\mathbf x,\mathbf y)}^\infty    \Big(\frac{\| \mathbf y-\mathbf y'\|}{td(\mathbf x,\mathbf y)}\Big)^{2\delta}  \mathcal V(\mathbf x, \mathbf y,td(\mathbf x,\mathbf y))^{-2}\Big(1+\frac{1}{t}\Big)^{-2M'-2\delta} \frac{dt}{t}\\
         &\leq
         \frac{C}{\mathcal V(\mathbf x,\mathbf y, d(\mathbf x,\mathbf y))^2} \int_0^{\|\mathbf y-\mathbf y'\|/d(\mathbf x,\mathbf y)} \frac{\mathcal V(\mathbf x,d(\mathbf x,\mathbf y))^2}{\mathcal V(\mathbf x, \mathbf y,td(\mathbf x,\mathbf y)))^2}t^{2M'} \frac{dt}{t}\\
         &+
         \frac{C}{\mathcal V(\mathbf x,\mathbf y,d(\mathbf x,\mathbf y))^2} \int_0^\infty \Big( \frac{\| \mathbf y-\mathbf y'\|}{d(\mathbf x,\mathbf y)}\Big)^{2\delta}t^{-2\delta}  \frac{\mathcal V(\mathbf x,\mathbf y,d(\mathbf x,\mathbf y))^2}{\mathcal V(\mathbf x, \mathbf y,td(\mathbf x,\mathbf y))^2}\Big(1+\frac{1}{t}\Big)^{-2M'-2\delta} \frac{dt}{t}.\\
         \end{split}
         \end{equation*}
         Applying \eqref{eq:growth} together with \eqref{eq:ball_ball}, we get
         \begin{equation*}
             \begin{split}
         \| \mathcal K(\mathbf x,\mathbf y)&-\mathcal K(\mathbf x,\mathbf y')\|^2_{\mathcal L(\mathcal H_1,\mathcal H_2)}  \leq  \frac{C}{w(B(\mathbf x,d(\mathbf x,\mathbf y)))^2} \int_0^{\|\mathbf y-\mathbf y'\|/d(\mathbf x,\mathbf y)} t^{2M'-2\mathbf N} \frac{dt}{t}\\
         &+ \frac{C}{w(B(\mathbf x,d(\mathbf x,\mathbf y)))^2} \int_0^\infty \Big( \frac{\| \mathbf y-\mathbf y'\|}{d(\mathbf x,\mathbf y)}\Big)^{2\delta}t^{-2\delta}  (t^{-\mathbf N}+t^{-N})^2\Big(1+\frac{1}{t}\Big)^{-2M'-2\delta} \frac{dt}{t}\\
         &\leq \frac{C}{w(B(\mathbf x,d(\mathbf x,\mathbf y)))^2} \Big( \frac{\|\mathbf y-\mathbf y'\|^{2M'-2\mathbf N}}{d(\mathbf x,\mathbf y)^{2M'-2\mathbf N}} + \frac{\|\mathbf y-\mathbf y'\|^{2\delta}}{d(\mathbf x,\mathbf y)^{2\delta}}\Big).
    \end{split}
\end{equation*}
The proof of \eqref{eq:kernel_assumption_2} is identical to that of \eqref{eq:kernel_assumption_1} and uses \eqref{eq:Holder_K33}.
\end{proof}
\section{Bessel potentials}

For a real number $s>0$ we set
\begin{align*}
    J^{\{s\}}(\mathbf{x})=\mathcal{F}^{-1}\big((1+\|\cdot\|^2)^{-s\slash 2}\big)(\mathbf{x}).
\end{align*}
By the gamma function identity we have
\begin{align*}
    (1+\|\xi\|^2)^{-s\slash 2}={\Gamma \Big(\frac{s}{2}\Big)^{-1}}\int_0^{\infty}e^{-t}e^{-t\|\xi\|^2}t^{s\slash 2}\,\frac{dt}{t},
\end{align*}
which leads us to
\begin{equation}\label{eq:subordination}
    J^{\{s\}}(\mathbf{x})={\Gamma \Big(\frac{s}{2}\Big)^{-1}}\int_0^{\infty}e^{-t}h_t(\mathbf{x})t^{s\slash 2}\,\frac{dt}{t}
\end{equation}
(see~\eqref{eq:heat_h_t}). The function $J^{\{s\}}$ is radial, positive and belongs $L^1(dw)$. Moreover, by~\eqref{eq:conv_def}, for all $s_1,s_2>0$ we have
\begin{equation}\label{eq:Bessel_semigroup}
    J^{\{s_1\}}*J^{\{s_2\}}=c_kJ^{\{s_1+s_2\}}.
\end{equation}

\subsection{Pointwise estimates for the Bessel integral kernel.
}
The following proposition is an easy consequence of \eqref{eq:subordination} and \eqref{eq:heat_h_t}.
\begin{proposition}\label{propo:bessel_estimate}
Let $M>0$. There is a constant $C=C_{s,M}>0$ such that
\begin{align*}
    0<J^{\{s\}}(\mathbf{x}) \leq C \begin{cases}
    \|\mathbf{x}\|^{s-\mathbf{N}} & \text{ if }\|\mathbf{x}\| \leq 1\slash 2, \quad 0<s<\mathbf N,\\
    -\ln \|\mathbf x\| & \text{ if }\|\mathbf{x}\| \leq 1\slash 2, \quad s=\mathbf N,\\
    1 & \text{ if } \| \mathbf x\|\leq 1/2, \quad s>\mathbf N,\\
    (1+\|\mathbf{x}\|^2)^{-M} & \text{ if }\|\mathbf{x}\|>1/2,  \quad 0<s.
    \end{cases}
\end{align*}
\end{proposition}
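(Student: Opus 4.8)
The plan is to start from the subordination formula~\eqref{eq:subordination} combined with the explicit Gaussian~\eqref{eq:heat_h_t}, which gives
$$J^{\{s\}}(\mathbf{x})=c_{k,s}\int_0^\infty e^{-t}\,e^{-\|\mathbf{x}\|^2/(4t)}\,t^{(s-\mathbf{N})/2}\,\frac{dt}{t},\qquad c_{k,s}:=c_k^{-1}2^{-\mathbf{N}/2}\,\Gamma(s/2)^{-1}>0.$$
Positivity of $J^{\{s\}}$ is then immediate, since the integrand is strictly positive for every $t>0$. Writing $r=\|\mathbf{x}\|$, it remains to bound from above the quantity $I(r):=\int_0^\infty e^{-t}e^{-r^2/(4t)}t^{(s-\mathbf{N})/2}\,\frac{dt}{t}$ in the four regimes listed in the statement. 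In each case the estimate will follow from discarding one of the two exponential factors and carrying out the scaling substitution $t=r^2u$.

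First I would treat $r\le 1/2$. If $0<s<\mathbf{N}$, then $t^{(s-\mathbf{N})/2-1}$ is integrable at $+\infty$ and the factor $e^{-r^2/(4t)}$ removes the singularity at $t=0$, so bounding $e^{-t}\le 1$ and substituting $t=r^2u$ gives $I(r)\le r^{s-\mathbf{N}}\int_0^\infty e^{-1/(4u)}u^{(s-\mathbf{N})/2-1}\,du=C_s\,r^{s-\mathbf{N}}$. If $s=\mathbf{N}$, I would split $I(r)=\int_0^1+\int_1^\infty$; the integral over $(1,\infty)$ is a finite constant, while for the integral over $(0,1)$, bounding $e^{-t}\le 1$ and substituting $t=r^2u$ yields $\int_0^{1/r^2}e^{-1/(4u)}u^{-1}\,du\le C+\int_1^{1/r^2}u^{-1}\,du=C-2\ln r$, which is $\le C'(-\ln r)$ because $r\le 1/2$. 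If $s>\mathbf{N}$, bounding $e^{-r^2/(4t)}\le 1$ gives $I(r)\le\int_0^\infty e^{-t}t^{(s-\mathbf{N})/2-1}\,dt=\Gamma\big((s-\mathbf{N})/2\big)$, a constant.

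Next I would treat $r>1/2$, where exponential decay has to be extracted first. By the arithmetic--geometric mean inequality, $\tfrac{t}{2}+\tfrac{r^2}{8t}\ge\tfrac{r}{2}$, hence
$$e^{-t}e^{-r^2/(4t)}=\big(e^{-t/2}e^{-r^2/(8t)}\big)\big(e^{-t/2}e^{-r^2/(8t)}\big)\le e^{-r/2}\,e^{-t/2}e^{-r^2/(8t)},$$
so $I(r)\le e^{-r/2}\int_0^\infty e^{-t/2}e^{-r^2/(8t)}t^{(s-\mathbf{N})/2}\,\frac{dt}{t}$. Repeating the estimates of the previous paragraph on the remaining integral (splitting at $t=1$ and substituting $t=r^2u$ on $(0,1)$) shows that it is bounded by a constant independent of $r$ when $r>1/2$, in each of the three cases $s<\mathbf{N}$, $s=\mathbf{N}$, $s>\mathbf{N}$; in particular the only logarithm that could occur, $\ln(1/r)$, stays bounded in that range. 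Thus $I(r)\le C\,e^{-r/2}$, and since $e^{-r/2}(1+r^2)^{M}$ is bounded on $(0,\infty)$ we obtain $I(r)\le C_{M}(1+r^2)^{-M}$, which is the fourth case. The argument is entirely elementary; the only point requiring some care is checking convergence of the auxiliary integrals after the substitution $t=r^2u$ and confirming that, for $r>1/2$, the residual integral in the last step is bounded uniformly in $r$ — this is precisely where the trichotomy $s<\mathbf{N}$, $s=\mathbf{N}$, $s>\mathbf{N}$ must be respected.
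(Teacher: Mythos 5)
Your proof is correct and follows exactly the route the paper intends: the paper gives no argument beyond declaring the proposition ``an easy consequence of \eqref{eq:subordination} and \eqref{eq:heat_h_t}'', and your computation (substituting the explicit Gaussian into the subordination integral, scaling $t=r^2u$, and splitting the cases $s<\mathbf N$, $s=\mathbf N$, $s>\mathbf N$, with the AM--GM trick extracting $e^{-r/2}$ for $r>1/2$) supplies precisely the omitted details. All the auxiliary integrals converge as you claim, so nothing further is needed.
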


\begin{proposition}\label{propo:nonradial_optimal}
Assume that $M\geq 0$ and $|f(\mathbf{\mathbf{\mathbf{x}}})|\leq (1+\|\mathbf{\mathbf{\mathbf{x}}}\|)^{-M-\mathbf N}$ for all $\mathbf{x} \in \mathbb{R}^N$. Let $s>\frac{\mathbf N}{2}$. Then there is a constant $C>0$ such that for all $\mathbf{x},\mathbf{y} \in \mathbb{R}^N$ we have
$$ |(f*J^{\{s\}}*J^{\{s\}})(\mathbf{\mathbf{\mathbf{x}}},\mathbf{y})|\leq C\mathcal V(\mathbf{\mathbf{x}}, \mathbf y,1)^{-1} (1+d(\mathbf{x},\mathbf{y}))^{-M}.$$
\end{proposition}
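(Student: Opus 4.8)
The plan is to collapse the two Bessel factors to one, pass to the heat semigroup via subordination, and reduce matters to a Gaussian estimate for the heat kernel followed by doubling-type bookkeeping.

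First I would use the semigroup property \eqref{eq:Bessel_semigroup} to write $f*J^{\{s\}}*J^{\{s\}}=c_k\,f*J^{\{2s\}}$; since $s>\mathbf N/2$ we have $2s>\mathbf N$, so by Proposition~\ref{propo:bessel_estimate} the radial function $J^{\{2s\}}$ is bounded near the origin with arbitrarily fast polynomial decay, and by \eqref{eq:subordination}--\eqref{eq:heat_h_t} it is a \emph{positive} superposition of Gaussians, $J^{\{2s\}}=\Gamma(s)^{-1}\int_0^\infty e^{-t}h_t\,t^{s}\,\frac{dt}{t}$. Passing to the two-variable kernel and interchanging integrations (legitimate by the decay of $f$ and Theorem~\ref{teo:heat}), one obtains
\begin{equation*}
(f*J^{\{s\}}*J^{\{s\}})(\mathbf x,\mathbf y)=\frac{c_k}{\Gamma(s)}\int_0^\infty e^{-t}\,(f*h_t)(\mathbf x,\mathbf y)\,t^{s}\,\frac{dt}{t},
\end{equation*}
so it suffices to bound $(f*h_t)(\mathbf x,\mathbf y)$ with a dependence on $t$ and on $d(\mathbf x,\mathbf y)$ which, after multiplication by $e^{-t}t^{s}$ and integration over $t\in(0,\infty)$, produces $\mathcal V(\mathbf x,\mathbf y,1)^{-1}(1+d(\mathbf x,\mathbf y))^{-M}$.

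The hard part is a pointwise estimate for $(f*h_t)(\mathbf x,\mathbf y)=\tau_{\mathbf x}(f*h_t)(-\mathbf y)$. Since $f$ is not radial, $\tau_{\mathbf x}f$ is not a usable object; but $h_t$ is radial, so --- keeping $f$ fixed --- I would use that the Dunkl translation commutes with convolution, $\tau_{\mathbf x}(f*h_t)=f*(\tau_{\mathbf x}h_t)$, to get
\begin{equation*}
(f*h_t)(\mathbf x,\mathbf y)=\int_{\mathbb R^N} f(\mathbf z)\,\mathcal H_t(\mathbf x,\mathbf y,\mathbf z)\,dw(\mathbf z),\qquad \mathcal H_t(\mathbf x,\mathbf y,\mathbf z)=\tau_{-\mathbf y}(\tau_{\mathbf x}h_t)(-\mathbf z),
\end{equation*}
the twice-translated heat kernel. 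The key input is a Gaussian-type bound for $\mathcal H_t$, obtained by iterating the estimate \eqref{Gauss} of Theorem~\ref{teo:heat}(a) together with the semigroup identity $h_{2\tau}(\mathbf a,\mathbf b)=\int h_\tau(\mathbf a,\mathbf c)h_\tau(\mathbf c,\mathbf b)\,dw(\mathbf c)$: roughly, a volume factor comparable to $\mathcal V(\mathbf x,\mathbf y,\sqrt{t})^{-1}$, a factor $e^{-c\,d(\mathbf x,\mathbf y)^2/t}$, and Gaussian decay in the distance from $\mathbf z$ to the finite ``difference set'' $\{\sigma_1\mathbf x-\sigma_2\mathbf y:\sigma_1,\sigma_2\in G\}$ at scale $\sqrt{t}$. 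Every element of that set has norm $\ge d(\mathbf x,\mathbf y)$, so on the region where the last Gaussian is not negligible $|f(\mathbf z)|\le(1+\|\mathbf z\|)^{-M-\mathbf N}$ is controlled by $(1+d(\mathbf x,\mathbf y))^{-M-\mathbf N}$ up to bounded corrections, the $dw$-integration of the Gaussian there --- via \eqref{eq:asymp} and \eqref{eq:doubling} --- supplies the factor $\mathcal V(\mathbf x,\mathbf y,\sqrt{t})^{-1}$, and the contribution of the complement is killed by the Gaussian.

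Substituting the resulting bound for $(f*h_t)(\mathbf x,\mathbf y)$ into the subordination integral, I would split the cases $d(\mathbf x,\mathbf y)\le1$ and $d(\mathbf x,\mathbf y)>1$ and use \eqref{eq:ball_ball} and \eqref{eq:growth} to replace $\mathcal V(\mathbf x,\mathbf y,\sqrt{t})^{-1}$ by $\mathcal V(\mathbf x,\mathbf y,1)^{-1}$ at the cost of a power of $t$ that is absorbed by $e^{-t}t^{s}$ for large $t$ and by the factor $e^{-c\,d(\mathbf x,\mathbf y)^2/t}$ for small $t$ when $d(\mathbf x,\mathbf y)>0$; the remaining elementary $t$-integral then yields exactly $\mathcal V(\mathbf x,\mathbf y,1)^{-1}(1+d(\mathbf x,\mathbf y))^{-M}$. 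I expect the genuine difficulty to be precisely the Gaussian estimate for the twice-translated heat kernel $\mathcal H_t$ --- the device forced on us by the non-radiality of $f$ --- together with the care needed to keep the comparable but distinct volume quantities $w(B(\cdot,\cdot))$, $V$ and $\mathcal V$ aligned throughout the $\mathbf z$- and $t$-integrations, so as to land on $\mathcal V(\mathbf x,\mathbf y,1)^{-1}(1+d(\mathbf x,\mathbf y))^{-M}$ with no spurious extra decay (consistently with the possibility $M=0$); the presence of two Bessel factors, i.e.\ $2s>\mathbf N$, is exactly what renders the $t$-integral convergent near $t=0$ after this bookkeeping.
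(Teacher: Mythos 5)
Your reduction is a genuinely different route from the paper's, and it founders precisely where you already flag the difficulty. The chain $J^{\{s\}}*J^{\{s\}}=c_kJ^{\{2s\}}$, then subordination, then $(f*h_t)(\mathbf x,\mathbf y)=\int f(\mathbf z)\,\mathcal H_t(\mathbf x,\mathbf y,\mathbf z)\,dw(\mathbf z)$ with $\mathcal H_t(\mathbf x,\mathbf y,\mathbf z)=\tau_{-\mathbf y}(\tau_{\mathbf x}h_t)(-\mathbf z)$ is all correct, but the ``key input'' you invoke --- a Gaussian-type pointwise bound for $\mathcal H_t$, with $\mathcal V(\mathbf x,\mathbf y,\sqrt t)^{-1}$ decay and Gaussian concentration of $\mathbf z$ near $\{\sigma_1\mathbf x-\sigma_2\mathbf y\}$ --- is not a theorem you can cite and is not obtainable from Theorem~\ref{teo:heat}. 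That theorem bounds the \emph{two-variable} kernel $h_t(\mathbf x,\mathbf y)=\tau_{\mathbf x}h_t(-\mathbf y)$, i.e.\ the Dunkl translate of a \emph{radial} function. Your $\mathcal H_t$ is $\tau_{-\mathbf y}$ applied to $\tau_{\mathbf x}h_t$, which is no longer radial once $\mathbf x\neq0$, and the whole difficulty of the rational Dunkl setting is that no pointwise (size, positivity, or support) control of $\tau_{\mathbf a}g$ for non-radial $g$ is available --- indeed the paper explicitly recalls that even $L^p$ boundedness of Dunkl translations is open for $p\neq2$. Your suggestion to get the bound by ``iterating \eqref{Gauss} together with the semigroup identity'' does not work: the Chapman--Kolmogorov identity $h_{2\tau}(\mathbf a,\mathbf b)=\int h_\tau(\mathbf a,\mathbf c)h_\tau(\mathbf c,\mathbf b)\,dw(\mathbf c)$ only reproduces the two-variable kernel; it never produces the three-variable object $\tau_{-\mathbf y}\tau_{\mathbf x}h_t(-\mathbf z)$. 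So what you have is a correct reduction to an estimate that would itself need a substantial new proof (and may well be false in the claimed Gaussian form, since one cannot even assert $\mathcal H_t\ge0$ a priori).

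For contrast, the paper's proof deliberately avoids any pointwise control of doubly translated kernels: it splits $J^{\{s\}}$ and $f$ into dyadic annular pieces $\phi_\ell, \phi_m, f_j$, uses the known $L^2(dw)$ estimates for translated annular kernels from~\cite{DzH} (Lemma~4.3 and inequality~(4.8)), bounds each $(f_j*\phi_\ell*\phi_m)(\mathbf x,\mathbf y)$ by Cauchy--Schwarz, and then exploits the support constraint $(f_j*\phi_\ell*\phi_m)(\mathbf x,\mathbf y)=0$ when $d(\mathbf x,\mathbf y)>2^{j+\ell+m}$ to sum the geometric series and extract the $(1+d(\mathbf x,\mathbf y))^{-M}$ decay. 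In other words, because the only translation bound available for general kernels is the $L^2$ one, the paper organizes the whole argument around $L^2$ quantities; your scheme instead asks for an $L^\infty$ (pointwise Gaussian) bound on a doubly translated kernel, which is exactly what the Dunkl setting does not give you. To salvage your approach you would have to prove the Gaussian bound on $\mathcal H_t$ from scratch, and that is a genuinely hard open-ended task, not a bookkeeping step.
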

\begin{proof}
Let  $U_j=B(0,2^j)\setminus B(0,2^{j-1})$ for $j \in \mathbb{Z}$. Set
$$ \phi_j(\mathbf{x})=J^{\{s\}}(\mathbf{x})\chi_{U_j}(\mathbf{x}).$$
Let $V_0=B(0,1)$, $V_{j}=U_{j}$ for $j > 0$. For $j \in \mathbb{N}_{0}$ we set
$$f_{j}(\mathbf{x})=f(\mathbf{x})\chi_{V_{j}}(\mathbf{x}).$$
Thanks to Proposition~\ref{propo:bessel_estimate}, for every $D>0$ large enough, we have
$$ \| f_j\|_{L^1(dw)}\leq C w(B(0,2^j))2^{-j(M+\mathbf N)}, \quad \| \phi_\ell \|_{L^\infty} \leq C_D\min(2^{-\ell D},2^{\ell(s-\mathbf{N})}) $$
with $C,C_{D}>0$ independent of $j \in \mathbb{N}_0$ and $\ell \in \mathbb{Z}$. Then
$$ (f*J^{\{s\}}*J^{\{s\}})(\mathbf{x},\mathbf{y})=\sum_{j \in \mathbb{N}_{0}\,,\ell,m \in \mathbb{Z}} (f_j*\phi_\ell*\phi_m)(\mathbf{x},\mathbf{y}).$$
Lemma 4.3 of \cite{DzH} asserts that
$$\Big(\int_{\mathbb{R}^N}\phi_m (\mathbf{z},\mathbf{y})^2\, dw (\mathbf{z})\Big)^{1/2} \leq CC_D\frac{2^{m\mathbf N}\min(2^{-mD},2^{m(s-\mathbf{N})})}{w(B(\mathbf{y},2^m))^{1/2}}\leq  CC_D\frac{\min(2^{m(\mathbf{N}-D)},2^{m(s-\mathbf{N}/2)})}{w(B(\mathbf{y},1))^{1/2}}.$$
Inequality (4.8) of \cite{DzH} asserts that
$$ \Big(\int_{\mathbb{R}^N}|f_j*\phi_\ell  (\mathbf{x},\mathbf{z})|^2\, dw (\mathbf{z})\Big)^{1/2} \leq  CC_D \frac{\min(2^{\ell(\mathbf{N}-D)},2^{\ell(s-\mathbf{N}/2)})}{w(B(\mathbf{x},1))^{1/2}}2^{-jM}. $$
Hence, by the Cauchy-Schwarz inequality, \begin{align*}
    |(f_j*\phi_\ell *\phi_m)(\mathbf{x},\mathbf{y})|&\leq CC_D^2 \frac{\min(2^{\ell(\mathbf{N}-D)},2^{\ell(s-\mathbf{N}/2)})}{w(B(\mathbf{x},1))^{1/2}} \frac{\min(2^{m(\mathbf{N}-D)},2^{m(s-\mathbf{N}/2)})}{w(B(\mathbf{y},1))^{1/2}}2^{-jM}\\&=:s_{j,m,\ell}(\mathbf{x},\mathbf{y}).
\end{align*}
Observe that $\sum_{j\geq 0, \, m,\ell\in \mathbb Z} s_{j,m,\ell}(\mathbf{x},\mathbf{y}) \leq C \mathcal V (\mathbf x,\mathbf y,1)^{-1}$. Therefore, to finish the proof, we consider $d(\mathbf x,\mathbf y)\geq 1$.
Recall that $(f_j*\phi_\ell *\phi_m)(\mathbf{x},\mathbf{y})=0$ if $d(\mathbf{x},\mathbf{y})>2^{j+\ell+m}$. Hence,
\begin{equation}
    \begin{split}
  &\sum_{j \geq 0, \, m,\ell \in \mathbb{Z}}| (f_j*\phi_\ell *\phi_m)(\mathbf{x},\mathbf{y}) |\leq \sum_{m,\ell \in \mathbb{Z}} \ \sum_{j \geq 0: 2^j\geq 2^{-\ell-m}d(\mathbf{x},\mathbf{y})}     s_{j,m,\ell}(\mathbf{x},\mathbf{y})\\
  &\leq C\sum_{m,\ell\in \mathbb{Z}}\frac{\min(2^{m(\mathbf N-D)},2^{m(s-\mathbf{N}/2)})\min(2^{\ell(\mathbf N-D)},2^{\ell(s-\mathbf{N}/2)}) \Big(d(\mathbf{x},\mathbf{y})2^{-\ell-m}\Big)^{-M}}{w(B(\mathbf{x},1))^{1/2}w(B(\mathbf{y},1))^{1/2}}\\
  &\leq C  \mathcal V(\mathbf{x},\mathbf y, 1)^{-1}d(\mathbf{x},\mathbf{y})^{-M}
    \end{split}
\end{equation}
\end{proof}

\subsection{\texorpdfstring{$L^1$}{L1} estimates for Bessel potentials}

\begin{lemma}\label{lem:axiom}
Let $M \geq 0$, $\kappa > 3\mathbf{N}/2 +M$, and $\gamma \geq 0$. Suppose that a measurable function $f:(0,\infty) \times \mathbb{R}^{N} \times \mathbb{R}^{N} \to \mathbb{C}$ satisfies
\begin{equation}\label{eq:kappa}
    |f(t,\mathbf{x},\mathbf{y})| \leq t^{-\gamma/2}{ V(\mathbf{x},\mathbf{y},\sqrt{t})^{-1}}\left(1+\frac{d(\mathbf{x},\mathbf{y})}{\sqrt{t}}\right)^{-\kappa}
\end{equation}
for all $\mathbf{x},\mathbf{y} \in \mathbb{R}^{N}$ and $t>0$. Then there is a constant $C=C_{M,\kappa,\gamma}>0$ such that for all $\mathbf{y} \in \mathbb{R}^N$ and $t>0$ we have
\begin{equation}\label{eq:first_axiom}
    \int_{\mathbb{R}^N} \frac{|f(t,\mathbf{x},\mathbf{y})|}{w(B(\mathbf{x},1))^{1/2}}(1+d(\mathbf{x},\mathbf{y}))^M\, dw(\mathbf{x}) \leq Ct^{-\gamma/2}\frac{(1+\sqrt{t})^{M+\mathbf N/2}}{w(B(\mathbf y,1))^{1/2}}.
\end{equation}
If additionally, for all $\mathbf{x},\mathbf{y},\mathbf{y}' \in \mathbb{R}^N$ and $\|\mathbf{y}-\mathbf{y}'\| \leq 1$ we have
\begin{equation}\label{eq:kappa_1}
    |f(t,\mathbf{x},\mathbf{y})-f(t,\mathbf{x},\mathbf{y}')| \leq \frac{\|\mathbf{y}-\mathbf{y}'\|}{\sqrt{t}} t^{-\gamma/2} V(\mathbf{x},\mathbf{y},\sqrt{t})^{-1}\left(1+\frac{d(\mathbf{x},\mathbf{y})}{\sqrt{t}}\right)^{-\kappa},
\end{equation}
then for any $\delta \in [0,1]$ there is a constant $C=C_{M,\kappa,\gamma,\delta}>0$ such that for all $\mathbf{y},\mathbf{y}' \in \mathbb{R}^N$ such that $\|\mathbf{y}-\mathbf{y}'\| \leq 1$ we have
\begin{equation}\label{eq:second_axiom}
\begin{split}
    &\int_{\mathbb{R}^N} \frac{|f(t,\mathbf{x},\mathbf{y})-f(t,\mathbf{x},\mathbf{y}')|}{w(B(\mathbf{x},1))^{1/2}}(1+d(\mathbf{x},\mathbf{y}))^M\, dw(\mathbf{x}) \\&\leq Ct^{-\gamma/2}\frac{\|\mathbf{y}-\mathbf{y}'\|^{\delta}}{t^{\delta/2}}\frac{(1+\sqrt{t})^{M+\mathbf N/2}}{w(B(\mathbf{y},1))^{1/2}}.
\end{split}
\end{equation}
\end{lemma}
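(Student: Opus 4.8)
The plan is to prove the two estimates \eqref{eq:first_axiom} and \eqref{eq:second_axiom} by directly integrating the pointwise bounds \eqref{eq:kappa} and \eqref{eq:kappa_1}, splitting $\mathbb{R}^N$ into the dyadic annuli $d(\mathbf{x},\mathbf{y}) \sim 2^j\sqrt{t}$ (together with the ``core'' region $d(\mathbf{x},\mathbf{y}) \le \sqrt{t}$) and summing the resulting geometric-type series. The key technical inputs are the doubling property \eqref{eq:doubling}, the two-sided growth estimate \eqref{eq:growth}, and the comparison $V(\mathbf{x},\mathbf{y},r) \sim w(B(\mathbf{y},r))$ when $d(\mathbf{x},\mathbf{y}) \lesssim r$.

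First I would prove \eqref{eq:first_axiom}. Using \eqref{eq:kappa}, the left-hand side is bounded by
$$
C t^{-\gamma/2}\int_{\mathbb{R}^N} \frac{1}{w(B(\mathbf{x},1))^{1/2}\,V(\mathbf{x},\mathbf{y},\sqrt{t})}\Big(1+\frac{d(\mathbf{x},\mathbf{y})}{\sqrt t}\Big)^{-\kappa}(1+d(\mathbf{x},\mathbf{y}))^M\,dw(\mathbf{x}).
$$
To handle the factor $w(B(\mathbf{x},1))^{-1/2}$, note that by \eqref{eq:growth} we have $w(B(\mathbf{x},1))^{-1/2}\le C(1+\sqrt t)^{\mathbf N/2}\,w(B(\mathbf{x},\sqrt t))^{-1/2}$ when $\sqrt t \ge 1$, and $w(B(\mathbf{x},1))^{-1/2}\le C(1+1/\sqrt t)^{\mathbf N/2}w(B(\mathbf{x},\sqrt t))^{-1/2}$ always, so in either case one gains a factor $(1+\sqrt t)^{\mathbf N/2}$ at the cost of replacing the scale $1$ by $\sqrt t$ inside the ball; similarly $(1+d(\mathbf{x},\mathbf{y}))^M \le (1+\sqrt t)^M (1+ d(\mathbf{x},\mathbf{y})/\sqrt t)^M$. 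Then I would decompose the integral over the regions $A_0=\{d(\mathbf{x},\mathbf{y}) \le \sqrt t\}$ and $A_j = \{2^{j-1}\sqrt t < d(\mathbf{x},\mathbf{y}) \le 2^j \sqrt t\}$, $j\ge 1$. On $A_j$ one has $V(\mathbf{x},\mathbf{y},\sqrt t)^{-1}\le w(B(\mathbf{y},\sqrt t))^{-1}$, the decay factor contributes $2^{-j(\kappa - M)}$, and by the doubling property \eqref{eq:doubling} the measure of $A_j$ relative to $w(B(\mathbf{y},\sqrt t))$ is controlled by $C2^{j\mathbf N}$ (using $A_j \subset \bigcup_{\sigma\in G}B(\sigma(\mathbf{y}),2^j\sqrt t)$). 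Combining these and summing the geometric series in $j$ — which converges precisely because $\kappa > \mathbf N + M$, in fact $\kappa > 3\mathbf N/2 + M$ gives ample room — yields the bound $C t^{-\gamma/2}(1+\sqrt t)^{M+\mathbf N/2} w(B(\mathbf{y},\sqrt t))^{-1/2}\cdot w(B(\mathbf{y},\sqrt t))^{-1/2}$; one final application of \eqref{eq:growth} converts one copy of $w(B(\mathbf{y},\sqrt t))^{-1/2}$ back to $w(B(\mathbf{y},1))^{-1/2}$ up to an extra $(1+\sqrt t)^{\pm\mathbf N/2}$, which is absorbed into the stated exponent, giving \eqref{eq:first_axiom}.

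For \eqref{eq:second_axiom}, I would interpolate between two cases according to the size of $\|\mathbf{y}-\mathbf{y}'\|$ relative to $\sqrt t$. When $\|\mathbf{y}-\mathbf{y}'\| \ge \sqrt t$ (only possible if $t \le 1$), I bound $|f(t,\mathbf{x},\mathbf{y})-f(t,\mathbf{x},\mathbf{y}')|$ by $|f(t,\mathbf{x},\mathbf{y})|+|f(t,\mathbf{x},\mathbf{y}')|$, apply \eqref{eq:first_axiom} to each term (noting $d(\mathbf{x},\mathbf{y})\sim d(\mathbf{x},\mathbf{y}')$ and $w(B(\mathbf{y},1))\sim w(B(\mathbf{y}',1))$ since $\|\mathbf{y}-\mathbf{y}'\|\le 1$), and use $1 \le (\|\mathbf{y}-\mathbf{y}'\|/\sqrt t)^{\delta}$ to manufacture the required factor. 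When $\|\mathbf{y}-\mathbf{y}'\| < \sqrt t$, I use the Hölder bound \eqref{eq:kappa_1} directly and repeat the annular decomposition of the previous paragraph verbatim — the extra factor $\|\mathbf{y}-\mathbf{y}'\|/\sqrt t \le (\|\mathbf{y}-\mathbf{y}'\|/\sqrt t)^\delta$ (valid since $\delta\le1$ and the ratio is $<1$) just rides along and the same summation goes through. Since both cases give $C t^{-\gamma/2}(\|\mathbf{y}-\mathbf{y}'\|/\sqrt t)^{\delta}(1+\sqrt t)^{M+\mathbf N/2}w(B(\mathbf{y},1))^{-1/2}$, this proves \eqref{eq:second_axiom}.

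The main obstacle, and the place to be careful, is the bookkeeping of the powers of $(1+\sqrt t)$: one must track how the mismatch between the normalizing scale $1$ in $w(B(\cdot,1))^{1/2}$ and the natural heat scale $\sqrt t$ propagates through \eqref{eq:growth} in both the $\sqrt t \le 1$ and $\sqrt t \ge 1$ regimes, and confirm that the net exponent is exactly $M+\mathbf N/2$ as claimed (rather than something larger). Everything else — the annular decomposition, the geometric summation, the reflection-group covering argument for $\mathcal O(\cdot)$ sets — is routine given \eqref{eq:doubling}, \eqref{eq:growth}, \eqref{eq:ball_ball}, and the hypothesis $\kappa > 3\mathbf N/2 + M$, which is comfortably more than enough for convergence.
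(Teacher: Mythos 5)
Your proof of \eqref{eq:second_axiom} is essentially the paper's argument (split on $\|\mathbf y-\mathbf y'\|$ versus $\sqrt t$; use \eqref{eq:first_axiom} for $\mathbf y$ and $\mathbf y'$ separately in the first case, and apply \eqref{eq:first_axiom} to $\|\mathbf y-\mathbf y'\|^{-1}\bigl(f(t,\cdot,\mathbf y)-f(t,\cdot,\mathbf y')\bigr)$ with $\gamma$ replaced by $\gamma+1$ in the second), so that part is fine once \eqref{eq:first_axiom} is in place. The gap is in your proof of \eqref{eq:first_axiom}, exactly at the step you yourself flagged as delicate. You handle the weight $w(B(\mathbf x,1))^{-1/2}$ by changing the \emph{radius} of the ball from $1$ to $\sqrt t$, via $w(B(\mathbf x,1))^{-1/2}\le C(1+\sqrt t)^{\mathbf N/2}w(B(\mathbf x,\sqrt t))^{-1/2}$. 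This is true, but for $t<1$ it is badly lossy and the loss cannot be undone at the end: after the annular summation you are left with $w(B(\mathbf y,\sqrt t))^{-1/2}$ (a single copy, incidentally, not two --- the exponents coming from $V^{-1}$, the converted weight, and $w(A_j)$ add up to $-1-\tfrac12+1=-\tfrac12$), and for $\sqrt t<1$ the conversion you need, $w(B(\mathbf y,\sqrt t))^{-1/2}\le C\,w(B(\mathbf y,1))^{-1/2}$, goes the wrong way: by \eqref{eq:growth} one has $w(B(\mathbf y,\sqrt t))^{-1/2}\ge c\,t^{-N/4}\,w(B(\mathbf y,1))^{-1/2}$, so the deficit is an unbounded power of $1/t$, not a factor $(1+\sqrt t)^{\pm\mathbf N/2}$. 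Already in the Lebesgue case $k=0$ your chain of inequalities yields $t^{-\gamma/2-N/4}$ for small $t$ instead of $t^{-\gamma/2}$.

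The repair --- which is what the paper does --- is to change the \emph{center} rather than the radius: by \eqref{eq:growth} and \eqref{eq:ball_ball}, $w(B(\mathbf x,1))^{-1/2}\le C(1+d(\mathbf x,\mathbf y))^{\mathbf N/2}w(B(\mathbf y,1))^{-1/2}$. Then $w(B(\mathbf y,1))^{-1/2}$ exits the integral immediately, one writes $(1+d(\mathbf x,\mathbf y))^{M+\mathbf N/2}\le(1+\sqrt t)^{M+\mathbf N/2}\bigl(1+d(\mathbf x,\mathbf y)/\sqrt t\bigr)^{M+\mathbf N/2}$, and what remains is the bound $\int_{\mathbb R^N}V(\mathbf x,\mathbf y,\sqrt t)^{-1}\bigl(1+d(\mathbf x,\mathbf y)/\sqrt t\bigr)^{-\kappa+M+\mathbf N/2}\,dw(\mathbf x)\le C$, for which your annular decomposition (now with no $w(B(\mathbf x,\cdot))^{-1/2}$ left in the integrand) works verbatim and requires exactly $\kappa-M-\mathbf N/2>\mathbf N$, i.e. $\kappa>3\mathbf N/2+M$; so the hypothesis is used with no room to spare, contrary to your closing remark.
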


\begin{proof}
 It follows from~\eqref{eq:growth} and~\eqref{eq:ball_ball} that $w(B(\mathbf x,1))^{-1/2}\leq C(B(\mathbf y, 1))^{-1/2} (1+d(\mathbf x,\mathbf y))^{\mathbf N/2}$.  We have
\begin{equation*}\label{eq:calculation_axion}
    \begin{split}
&\int_{\mathbb{R}^N}  \frac{|f(t,\mathbf{x},\mathbf{y})|}{w(B(\mathbf{x},1))^{1/2}}(1+d(\mathbf{x},\mathbf{y}))^M\, dw(\mathbf{x})
 \leq  C\int_{\mathbb{R}^N} \frac{|f(t,\mathbf{x},\mathbf{y})|}{ w(B(\mathbf{y},1))^{1/2}}(1+d(\mathbf{x},\mathbf{y}))^{M+\mathbf N/2} \, dw(\mathbf{x})\\
&\leq C\int_{\mathbb{R}^N} \frac{|f(t,\mathbf{x},\mathbf{y})|}{w(B(\mathbf{y},1))^{1/2}}\Big(1+\frac{d(\mathbf{x},\mathbf{y})}{\sqrt{t}}\Big)^{M+\mathbf N/2} (1+\sqrt{t})^{M+\mathbf N/2}\, dw(\mathbf{x})\\
&\leq Ct^{-\gamma/2}(1+\sqrt{t})^{M+\mathbf N/2} w(B(\mathbf{y},1))^{-1/2}.
    \end{split}
\end{equation*}
where in the last estimate we have used the assumption $\kappa>3\mathbf{N}/2+M$.

In order to prove~\eqref{eq:second_axiom}  we note that for all $\mathbf{x},\mathbf{y},\mathbf{y}' \in \mathbb{R}^N$ such that $\|\mathbf{y}-\mathbf{y}'\| \leq 1$ we have
\begin{align*}
    (1+d(\mathbf{x},\mathbf{y}))^{M} \sim  (1+d(\mathbf{x},\mathbf{y}'))^{M} \ \ \text{ and } \ \ w(B(\mathbf{y},1)) \sim w(B(\mathbf{y}',1)).
\end{align*}
Consequently, by~\eqref{eq:first_axiom},
\begin{equation}\label{eq:first_axiom_with_y'}
     \int_{\mathbb{R}^N} \frac{|f(t,\mathbf{x},\mathbf{y}')|}{w(B(\mathbf{x},1))^{1/2}}(1+d(\mathbf{x},\mathbf{y}))^M\, dw(\mathbf{x}) \leq Ct^{-\gamma/2}\frac{(1+\sqrt{t})^{M+\mathbf N/2}}{w(B(\mathbf y,1))^{1/2}}
\end{equation}
Furthermore, if $0<\|\mathbf y-\mathbf y'\|\leq 1$, then,  by~\eqref{eq:first_axiom}  applied to
\begin{align*}
    \widetilde{f}(t,\mathbf{x},\mathbf{y})=\frac{1}{\|\mathbf{y}-\mathbf{y}'\|}(f(t,\mathbf{x},\mathbf{y})-f(t,\mathbf{x},\mathbf{y}'))
\end{align*}
with $\widetilde{\gamma}=\gamma+1$, we get
\begin{equation}\label{eq:lipschitz_1}
    \begin{split}
         &\int_{\mathbb{R}^N} w(B(\mathbf{x},1))^{-1/2}|f(t,\mathbf{x},\mathbf{y})-f(t,\mathbf{x},\mathbf{y}')|(1+d(\mathbf{x},\mathbf{y}))^M\, dw(\mathbf{x}) \\&\leq Ct^{-\gamma/2}\frac{\|\mathbf{y}-\mathbf{y}'\|}{\sqrt{t}}(1+\sqrt{t})^{M+\mathbf N/2}w(B(\mathbf{y},1))^{-1/2}.
    \end{split}
\end{equation}
Now~\eqref{eq:second_axiom} is a consequence of~\eqref{eq:first_axiom},~\eqref{eq:first_axiom_with_y'}, and~\eqref{eq:lipschitz_1}.
\end{proof}

\begin{proposition}\label{propo:Bessel_with_der}
Let $\delta \in [0,1]$, $M \geq 0$, $\beta,\beta' \in \mathbb{N}_0^{N}$ and $s>|\beta|+|\beta'|$. There is a constant $C=C_{M,\beta,\beta',s}>0$ such that for all $\mathbf{y} \in \mathbb{R}^N$ we have
\begin{equation}\label{eq:first_1_axiom}
    \int_{\mathbb{R}^N}w(B(\mathbf{x},1))^{-1/2}|\partial_{\mathbf x}^{\beta}\partial_{\mathbf{y}}^{\beta'}J^{\{s\}}(\mathbf{x},\mathbf{y})|(1+d(\mathbf{x},\mathbf{y}))^M\, dw(\mathbf{x}) \leq \frac{C}{w(B(\mathbf{y},1))^{1/2}}.
\end{equation}
Moreover, if we assume that $s>\delta+|\beta|+|\beta'|$, then there is a constant $C=C_{M,\beta,\beta',s,\delta}$ such that for all $\mathbf{y},\mathbf{y}' \in \mathbb{R}^N$ such that $\|\mathbf{y}-\mathbf{y}'\| \leq 1$ we have
\begin{equation}\label{eq:second_1_axiom}
    \int_{\mathbb{R}^N}w(B(\mathbf{x},1))^{-1/2}\Big|\partial^{\beta}_{\mathbf x}\partial^{\beta'}_{\mathbf{y}}J^{\{s\}}(\mathbf{x},\mathbf{y})-\partial^{\beta}_{\mathbf x}\partial_{\mathbf{y}}^{\beta'}J^{\{s\}}(\mathbf{x},\mathbf{y}')\Big| (1+d(\mathbf{x},\mathbf{y}))^M\, dw(\mathbf{x}) \leq \frac{C\| \mathbf{y}-\mathbf{y}'\|^\delta}{w(B(\mathbf{y},1))^{1/2}}.
\end{equation}
\end{proposition}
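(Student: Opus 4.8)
The plan is to represent $J^{\{s\}}$ through the heat semigroup by means of the subordination formula~\eqref{eq:subordination}. Transferring~\eqref{eq:subordination} to the translated kernels (see~\eqref{eq:heat_def}) and differentiating under the integral sign — legitimate for $d(\mathbf x,\mathbf y)>0$, hence for $dw$-almost every $\mathbf x$ with $\mathbf y$ fixed, since there the Gaussian factor of Theorem~\ref{teo:heat} makes the integrand and its $\mathbf x,\mathbf y$-derivatives locally uniformly integrable in $t$ — one obtains
$$\partial_{\mathbf x}^{\beta}\partial_{\mathbf y}^{\beta'}J^{\{s\}}(\mathbf x,\mathbf y)=\Gamma\Big(\tfrac s2\Big)^{-1}\int_0^\infty e^{-t}\,\partial_{\mathbf x}^{\beta}\partial_{\mathbf y}^{\beta'}h_t(\mathbf x,\mathbf y)\,t^{s/2}\,\frac{dt}{t}.$$
Both estimates will then follow by inserting this identity, bounding the modulus of the integral by the integral of the modulus, applying Tonelli's theorem, and integrating in $t$ the bounds supplied by Lemma~\ref{lem:axiom}.

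For~\eqref{eq:first_1_axiom}: by part~(d) of Theorem~\ref{teo:heat} with $m=0$ together with the elementary inequality $e^{-cu^2}\le C_\kappa(1+u)^{-\kappa}$, the kernel $(t,\mathbf x,\mathbf y)\mapsto\partial_{\mathbf x}^{\beta}\partial_{\mathbf y}^{\beta'}h_t(\mathbf x,\mathbf y)$ satisfies~\eqref{eq:kappa} with $\gamma=|\beta|+|\beta'|$ and with $\kappa$ as large as desired; taking $\kappa>3\mathbf N/2+M$, inequality~\eqref{eq:first_axiom} of Lemma~\ref{lem:axiom} gives, for all $\mathbf y$ and $t>0$,
$$\int_{\mathbb R^N}\frac{|\partial_{\mathbf x}^{\beta}\partial_{\mathbf y}^{\beta'}h_t(\mathbf x,\mathbf y)|}{w(B(\mathbf x,1))^{1/2}}(1+d(\mathbf x,\mathbf y))^M\,dw(\mathbf x)\le C\,t^{-(|\beta|+|\beta'|)/2}\,\frac{(1+\sqrt t)^{M+\mathbf N/2}}{w(B(\mathbf y,1))^{1/2}}.$$
Multiplying by $e^{-t}t^{s/2}t^{-1}$ and integrating in $t$, the integral $\int_0^\infty e^{-t}t^{(s-|\beta|-|\beta'|)/2}(1+\sqrt t)^{M+\mathbf N/2}\,\frac{dt}{t}$ converges — at $0$ precisely because $s>|\beta|+|\beta'|$, at $\infty$ thanks to $e^{-t}$ — which yields~\eqref{eq:first_1_axiom}.

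For~\eqref{eq:second_1_axiom}: put $r=\|\mathbf y-\mathbf y'\|\le1$, $\mathbf y_\theta=\mathbf y'+\theta(\mathbf y-\mathbf y')$, and
$$I(t):=\int_{\mathbb R^N}\frac{|\partial_{\mathbf x}^{\beta}\partial_{\mathbf y}^{\beta'}h_t(\mathbf x,\mathbf y)-\partial_{\mathbf x}^{\beta}\partial_{\mathbf y}^{\beta'}h_t(\mathbf x,\mathbf y')|}{w(B(\mathbf x,1))^{1/2}}(1+d(\mathbf x,\mathbf y))^M\,dw(\mathbf x).$$
I would establish two bounds for $I(t)$, both uniform in $t>0$ and $r\le1$. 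The first, from the triangle inequality, the estimate of the previous paragraph applied at $\mathbf y$ and at $\mathbf y'$, and the comparabilities $(1+d(\mathbf x,\mathbf y))\sim(1+d(\mathbf x,\mathbf y'))$, $w(B(\mathbf y,1))\sim w(B(\mathbf y',1))$ (valid for $r\le1$, exactly as used in the proof of Lemma~\ref{lem:axiom}), is $I(t)\le C\,t^{-(|\beta|+|\beta'|)/2}(1+\sqrt t)^{M+\mathbf N/2}w(B(\mathbf y,1))^{-1/2}$. The second uses the mean value theorem: $\frac{d}{d\theta}\partial_{\mathbf x}^{\beta}\partial_{\mathbf y}^{\beta'}h_t(\mathbf x,\mathbf y_\theta)$ is a sum of $\mathbf x,\mathbf y$-derivatives of $h_t$ of total order $|\beta|+|\beta'|+1$ with coefficients bounded by $r$, each satisfying~\eqref{eq:kappa} with $\gamma=|\beta|+|\beta'|+1$; hence, integrating over $\theta\in[0,1]$, using Tonelli's theorem, applying~\eqref{eq:first_axiom} of Lemma~\ref{lem:axiom} fibrewise in $\theta$, and invoking the above comparabilities now for $\mathbf y_\theta$ and $\mathbf y$ (legitimate since $\|\mathbf y-\mathbf y_\theta\|\le r\le1$), one gets $I(t)\le C\,r\,t^{-(|\beta|+|\beta'|+1)/2}(1+\sqrt t)^{M+\mathbf N/2}w(B(\mathbf y,1))^{-1/2}$. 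Combining the two and using $\min(1,r/\sqrt t)\le(r/\sqrt t)^{\delta}$ for $\delta\in[0,1]$ gives $I(t)\le C(r/\sqrt t)^{\delta}t^{-(|\beta|+|\beta'|)/2}(1+\sqrt t)^{M+\mathbf N/2}w(B(\mathbf y,1))^{-1/2}$; multiplying by $e^{-t}t^{s/2}t^{-1}$ and integrating, $\int_0^\infty e^{-t}t^{(s-\delta-|\beta|-|\beta'|)/2}(1+\sqrt t)^{M+\mathbf N/2}\,\frac{dt}{t}<\infty$ precisely because $s>\delta+|\beta|+|\beta'|$ (convergence at $0$) and by $e^{-t}$ (convergence at $\infty$), which yields~\eqref{eq:second_1_axiom}.

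The step I expect to require the most care is the H\"older bound in the second variable for the \emph{mixed} derivatives $\partial_{\mathbf x}^{\beta}\partial_{\mathbf y}^{\beta'}h_t$, since Theorem~\ref{teo:heat}(b) furnishes H\"older continuity only for the time derivatives $\partial_t^m h_t$, so the mixed version must be built by hand. The device of integrating the mean value identity over the segment $[\mathbf y',\mathbf y]$ \emph{before} integrating in $\mathbf x$, and invoking Lemma~\ref{lem:axiom} for each fixed $\theta$, is what keeps this clean: it never requires comparing the Gaussian factors $e^{-cd(\mathbf x,\mathbf y_\theta)^2/t}$ for different $\theta$ (which would be delicate near the reflection walls), only the harmless comparabilities of $d(\mathbf x,\cdot)$ and $w(B(\cdot,1))$ at points within distance $1$. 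The remaining bookkeeping — matching, at each appearance of the $t$-integral, the power of $t$ against the hypothesis on $s$ (namely $s>|\beta|+|\beta'|$ for~\eqref{eq:first_1_axiom} and $s>\delta+|\beta|+|\beta'|$ for~\eqref{eq:second_1_axiom}) — is routine.
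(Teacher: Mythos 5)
Your argument is correct and follows the paper's route: subordination \eqref{eq:subordination}, the mixed-derivative Gaussian bounds of Theorem~\ref{teo:heat}(d), the $L^1$-integration lemma (Lemma~\ref{lem:axiom}), and finally the $t$-integral whose convergence at $0$ is exactly governed by the hypotheses $s>|\beta|+|\beta'|$ and $s>\delta+|\beta|+|\beta'|$. The one place you deviate is worth noting. For \eqref{eq:second_1_axiom} the paper asserts that $\partial_{\mathbf x}^{\beta}\partial_{\mathbf y}^{\beta'}h_t$ satisfies the pointwise difference bound \eqref{eq:kappa_1} ``by Theorem~\ref{teo:heat} and the mean value theorem'' and then applies \eqref{eq:second_axiom} of Lemma~\ref{lem:axiom} directly. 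You instead bound the already-integrated quantity $I(t)$ in two ways — a crude triangle-inequality bound via \eqref{eq:first_axiom}, and a Lipschitz bound obtained by integrating the mean-value identity over $\theta\in[0,1]$ and applying \eqref{eq:first_axiom} fibrewise with $\gamma$ raised by $1$ — and then interpolate with $\min(1,r/\sqrt t)\le(r/\sqrt t)^\delta$. This is sounder than the paper's shortcut: the pointwise estimate \eqref{eq:kappa_1} for $\partial_{\mathbf x}^{\beta}\partial_{\mathbf y}^{\beta'}h_t$ with the fixed Gaussian weight $e^{-cd(\mathbf x,\mathbf y)^2/t}$ is in fact problematic in the regime $\sqrt t\ll\|\mathbf y-\mathbf y'\|\le 1$ with $d(\mathbf x,\mathbf y)\lesssim 1$, where $e^{-cd(\mathbf x,\mathbf y_\theta)^2/t}$ is not controlled by $e^{-cd(\mathbf x,\mathbf y)^2/t}$; your version needs \eqref{eq:kappa_1}-type information only in the regime $\|\mathbf y-\mathbf y'\|\le\sqrt t$, which is exactly what the mean value theorem delivers cleanly, and uses the harmless unit-scale comparabilities of $(1+d(\mathbf x,\cdot))^M$ and $w(B(\cdot,1))$ otherwise. (Your attribution of the delicacy to ``reflection walls'' is not quite the point — the same difficulty already arises for the Euclidean distance — but the remedy you adopt is the right one and is, in effect, a re-derivation of the interpolation inside the proof of Lemma~\ref{lem:axiom} performed after, rather than before, the $\mathbf x$-integration.)
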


\begin{proof}
Note that if $s>\gamma$, then there is a constant $C=C_{M,s}>0$ such that
\begin{equation}\label{eq:sub_app}
    \int_0^{\infty}e^{-t}t^{-\gamma/2}(1+t^{\mathbf{N}/4})(1+\sqrt{t})^{M}t^{s/2}\frac{dt}{t} \leq C.
\end{equation}
On the other hand, by Theorem~\ref{teo:heat} there is a constant $C=C_{\beta,\beta'}>0$ such that
\begin{equation}\label{eq:f_app}
    f(t,\mathbf{x},\mathbf{y})=C^{-1}\partial_{\mathbf{x}}^{\beta}\partial_{\mathbf{y}}^{\beta'}h_{t}(\mathbf{x},\mathbf{y})
\end{equation}
satisfies~\eqref{eq:kappa} with $\gamma=|\beta|+|\beta'|$ and $\kappa=2\mathbf{N}+M$. Therefore,~\eqref{eq:first_1_axiom} is a consequence of Lemma~\ref{lem:axiom},~\eqref{eq:sub_app}, and~\eqref{eq:subordination}. Similarly, by Theorem~\ref{teo:heat} and mean value theorem, there is a constant $C>0$ such  $f(t,\mathbf{x},\mathbf{y})$ defined in~\eqref{eq:f_app} satisfies~\eqref{eq:kappa_1} with $\gamma=|\beta|+|\beta'|$ and $\kappa=2\mathbf{N}+M$. Consequently,~\eqref{eq:second_1_axiom} follows by Lemma~\ref{lem:axiom},~\eqref{eq:sub_app}, and~\eqref{eq:subordination}.
\end{proof}

\begin{proposition}\label{propo:nonradial_optimal_lip}
Assume that $M\geq 0$ and $|f(\mathbf{\mathbf{\mathbf{x}}})|\leq (1+\|\mathbf{\mathbf{\mathbf{x}}}\|)^{-M-\mathbf N}$ for all $\mathbf{x} \in \mathbb{R}^N$. Let $s>\frac{\mathbf N}{2}$ and $0<\delta<2s-\mathbf N$, $0<\delta\leq 1$. Then there is a constant $C>0$ such that for all $\mathbf{x},\mathbf{y},\mathbf{y}' \in \mathbb{R}^N$, $\|\mathbf y-\mathbf y'\|\leq 1$ we have
\begin{equation*}
     |(f*J^{\{s\}}*J^{\{s\}})(\mathbf{\mathbf{\mathbf{x}}},\mathbf{y})-(f*J^{\{s\}}*J^{\{s\}})(\mathbf{\mathbf{\mathbf{x}}},\mathbf{y'})|\leq C\mathcal V(\mathbf x,\mathbf y,1)^{-1}\|\mathbf y-\mathbf y'\|^\delta (1+d(\mathbf{x},\mathbf{y}))^{-M}.
\end{equation*}
\end{proposition}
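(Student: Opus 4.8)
The plan is to reduce this Lipschitz‐type estimate to Proposition~\ref{propo:nonradial_optimal} and Proposition~\ref{propo:Bessel_with_der} by splitting the convolution kernel according to the size of $d(\mathbf{x},\mathbf{y})$, and then, in the region where $d(\mathbf{x},\mathbf{y})$ is large, to exploit the smoothness of the second Bessel factor $J^{\{s\}}$ in the $\mathbf{y}$--variable. Concretely, I would first record the dyadic decomposition of $J^{\{s\}}=\sum_{\ell\in\Z}\phi_\ell$ and $f=\sum_{j\ge0}f_j$ exactly as in the proof of Proposition~\ref{propo:nonradial_optimal}, so that
$$(f*J^{\{s\}}*J^{\{s\}})(\mathbf{x},\mathbf{y})=\sum_{j\ge0,\,\ell,m\in\Z}(f_j*\phi_\ell*\phi_m)(\mathbf{x},\mathbf{y}),$$
and observe that each summand vanishes unless $d(\mathbf{x},\mathbf{y})\le 2^{j+\ell+m}$.

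\textbf{Step 1 (small $d$).} When $d(\mathbf{x},\mathbf{y})\le 1$, one does not need to gain anything from $\|\mathbf{y}-\mathbf{y}'\|$ beyond $\|\mathbf{y}-\mathbf{y}'\|^\delta\le 1$; it suffices to bound the difference by $C\,\mathcal V(\mathbf{x},\mathbf{y},1)^{-1}$, which follows directly from Proposition~\ref{propo:nonradial_optimal} applied at both $\mathbf{y}$ and $\mathbf{y}'$ (noting $\mathcal V(\mathbf{x},\mathbf{y},1)\sim\mathcal V(\mathbf{x},\mathbf{y}',1)$ and $d(\mathbf{x},\mathbf{y}')\lesssim 1$ when $\|\mathbf{y}-\mathbf{y}'\|\le1$). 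So from here on assume $d(\mathbf{x},\mathbf{y})\ge1$.

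\textbf{Step 2 (large $d$, the main estimate).} Here I would write, with $F=f*J^{\{s\}}$ so that $|F(\mathbf{x},\mathbf{z})|$ inherits the bound from (4.8) of \cite{DzH} — i.e. $\big(\int |F(\mathbf{x},\mathbf{z})|^2\,dw(\mathbf z)\big)^{1/2}\lesssim w(B(\mathbf{x},1))^{-1/2}(1+d(\mathbf{x},\mathbf{y}))^{-M}$ after summing the $j,\ell$ series with the spatial localization — and expand the difference in the last factor:
$$(F*J^{\{s\}})(\mathbf{x},\mathbf{y})-(F*J^{\{s\}})(\mathbf{x},\mathbf{y}')=\int_{\mathbb R^N}F(\mathbf{x},\mathbf{z})\big(J^{\{s\}}(\mathbf{z},\mathbf{y})-J^{\{s\}}(\mathbf{z},\mathbf{y}')\big)\,dw(\mathbf z).$$
One then dyadically decomposes $J^{\{s\}}(\cdot,\mathbf{y})-J^{\{s\}}(\cdot,\mathbf{y}')=\sum_m(\phi_m(\cdot,\mathbf{y})-\phi_m(\cdot,\mathbf{y}'))$, uses the Hölder bound for the heat kernel from Theorem~\ref{teo:heat}(b) (equivalently the mean value theorem together with Theorem~\ref{teo:heat}(d), as already exploited in Proposition~\ref{propo:Bessel_with_der}) to obtain, via the same $L^2$ estimate as in \cite[Lemma 4.3]{DzH}, a bound of the form
$$\Big(\int_{\mathbb R^N}\big|\phi_m(\mathbf{z},\mathbf{y})-\phi_m(\mathbf{z},\mathbf{y}')\big|^2\,dw(\mathbf z)\Big)^{1/2}\le CC_D\,\|\mathbf{y}-\mathbf{y}'\|\,\frac{\min(2^{m(\mathbf N-D)},2^{m(s-\mathbf N/2-1)})}{w(B(\mathbf{y},1))^{1/2}}$$
valid for $\|\mathbf{y}-\mathbf{y}'\|\le 2^m$ (and, crucially, for $2^m<\|\mathbf{y}-\mathbf{y}'\|$ one simply uses the triangle inequality and the unperturbed bound on each of $\phi_m(\cdot,\mathbf{y})$, $\phi_m(\cdot,\mathbf{y}')$). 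Interpolating these two regimes produces the factor $\|\mathbf{y}-\mathbf{y}'\|^\delta$ at the cost of $\min(2^{m(\mathbf N-D)},2^{m(s-\mathbf N/2-\delta)})$; since $\delta<2s-\mathbf N$ forces $s-\mathbf N/2-\delta>-\mathbf N/2$, so that $s-\mathbf N/2 - \delta > -\mathbf N/2$ keeps geometric summability over $m\le 0$, while the $2^{m(\mathbf N-D)}$ tail controls $m\ge 0$. Pairing this with the $L^2$-bound on $F(\mathbf{x},\cdot)$ via Cauchy–Schwarz, then summing the $m$-series exactly as in the final display of the proof of Proposition~\ref{propo:nonradial_optimal} (keeping track of the constraint $d(\mathbf{x},\mathbf{y})\le 2^{j+\ell+m}$ to extract the factor $d(\mathbf{x},\mathbf{y})^{-M}$), gives
$$|(f*J^{\{s\}}*J^{\{s\}})(\mathbf{x},\mathbf{y})-(f*J^{\{s\}}*J^{\{s\}})(\mathbf{x},\mathbf{y}')|\le C\,\mathcal V(\mathbf{x},\mathbf{y},1)^{-1}\,\|\mathbf{y}-\mathbf{y}'\|^\delta\,d(\mathbf{x},\mathbf{y})^{-M},$$
which is the claim for $d(\mathbf{x},\mathbf{y})\ge1$.

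\textbf{Main obstacle.} The delicate point is the summability of the $m$-series after interpolation: one must check that introducing the Lipschitz gain $\|\mathbf{y}-\mathbf{y}'\|^\delta$ does not destroy the geometric decay at $m\to-\infty$, which is precisely where the hypothesis $0<\delta<2s-\mathbf N$ enters, exactly as $s>\mathbf N/2$ was used in Proposition~\ref{propo:nonradial_optimal}. A secondary technical nuisance is that the Hölder estimate for $\phi_m$ is only available for $\|\mathbf{y}-\mathbf{y}'\|\le 2^m$ (equivalently $\le\sqrt{t}$ after subordination), so the two-regime split on $m$ must be handled carefully; but since $\|\mathbf{y}-\mathbf{y}'\|\le1$ this only affects finitely many nonnegative $m$ near $m=0$, where the $2^{m(\mathbf N-D)}$ factor already gives room to spare. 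Everything else is a repetition of the bookkeeping in the proofs of Propositions~\ref{propo:nonradial_optimal} and~\ref{propo:Bessel_with_der}.
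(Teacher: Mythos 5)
There are genuine gaps in this proposal. The most immediate one is Step 1: when $d(\mathbf x,\mathbf y)\le 1$ the target bound is $C\,\mathcal V(\mathbf x,\mathbf y,1)^{-1}\|\mathbf y-\mathbf y'\|^{\delta}$, and since $\|\mathbf y-\mathbf y'\|$ may be arbitrarily small, the factor $\|\mathbf y-\mathbf y'\|^{\delta}$ is a required \emph{gain}, not something you may discard via $\|\mathbf y-\mathbf y'\|^{\delta}\le 1$. Applying Proposition~\ref{propo:nonradial_optimal} at $\mathbf y$ and $\mathbf y'$ separately and using the triangle inequality only yields $C\,\mathcal V(\mathbf x,\mathbf y,1)^{-1}$, which does not imply the claim. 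The H\"older gain has to come from the smoothness of a Bessel factor in every regime of $d(\mathbf x,\mathbf y)$.

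In Step 2 there are two further problems. First, the pieces $\phi_m=J^{\{s\}}\chi_{U_m}$ carry sharp cut-offs, so they are not Lipschitz, and the asserted $L^2$ H\"older estimate for $\phi_m(\cdot,\mathbf y)-\phi_m(\cdot,\mathbf y')$ does not follow from Theorem~\ref{teo:heat} (which controls the heat kernel, hence $J^{\{s\}}$ via subordination \emph{before} truncation) nor from the size estimate in \cite[Lemma~4.3]{DzH}; regularity of Dunkl translates of rough truncated functions is exactly the kind of statement one cannot take for granted in this setting. Second, even granting that estimate, your exponent bookkeeping fails: after interpolation the small-$m$ contribution is $2^{m(s-\mathbf N/2-\delta)}$, and $\sum_{m\le 0}2^{ma}$ converges only for $a>0$ (your criterion ``$>-\mathbf N/2$'' is not a summability condition). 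Thus placing the entire H\"older gain on a single factor of order $s$ works only for $\delta<s-\mathbf N/2$, which is strictly smaller than the hypothesis $\delta<2s-\mathbf N=2(s-\mathbf N/2)$. The paper's proof avoids all of this by regrouping $J^{\{s\}}*J^{\{s\}}=c_kJ^{\{2s\}}=J^{\{2s_1\}}*J^{\{s_2\}}$ (via \eqref{eq:Bessel_semigroup}) with $2s_1>\mathbf N$ and $s_2>\delta$ --- possible precisely because $\delta<2s-\mathbf N$ --- applying Proposition~\ref{propo:nonradial_optimal} to $f*J^{\{2s_1\}}$ and the integrated H\"older estimate \eqref{eq:second_1_axiom} of Proposition~\ref{propo:Bessel_with_der} (no dyadic truncation) to the intact smooth factor $J^{\{s_2\}}$. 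To repair your argument you would need a comparable redistribution of smoothness between the two factors and a legitimate H\"older estimate for the untruncated kernel.
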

\begin{proof}
Take $s_1>\mathbf N/2$ and $s_2>0$ such that $2s=2s_1+s_2$, $s_2>\delta$. Then
\begin{align*}
    f*J^{\{2s\}}=c_k^{-1}f*J^{\{2s_1\}}*J^{\{s_2\}}.
\end{align*}
By Proposition \ref{propo:nonradial_optimal}, for all $\mathbf{x},\mathbf{z} \in \mathbb{R}^N$ we have
$$ |(f*J^{\{s_1\}}*J^{\{s_1\}})(\mathbf{\mathbf{\mathbf{x}}},\mathbf{z})|\leq C\mathcal V(\mathbf x,\mathbf z,1)^{-1} (1+d(\mathbf{x},\mathbf{z}))^{-M}. $$
Applying \eqref{eq:second_1_axiom} we obtain
\begin{equation*}
    \begin{split}
     (1&+d(\mathbf x,\mathbf y))^M   |f*J^{\{2s\}}(\mathbf x,\mathbf y)-f*J^{\{2s\}}(\mathbf x,\mathbf y')|\\
        &\leq C \int_{\mathbb{R}^N} (1+d(\mathbf x,\mathbf z))^M(1+d(\mathbf z,\mathbf y))^M|f*J^{\{2s_1\}}(\mathbf x,\mathbf z)|\cdot |J^{\{s_2\}}(\mathbf z,\mathbf y)-J^{\{s_2\}}(\mathbf z,\mathbf y')|\, dw(\mathbf z)\\
        &\leq C\int_{\mathbb{R}^N} \mathcal V(\mathbf x,\mathbf z,1)^{-1} (1+d(\mathbf z,\mathbf y))^M|J^{\{s_2\}}(\mathbf z,\mathbf y)-J^{\{s_2\}}(\mathbf z,\mathbf y')|\, dw(\mathbf z)\\
        &\leq C \mathcal V(\mathbf x,\mathbf y,1)^{-1} \| \mathbf y-\mathbf y'\|^\delta.
    \end{split}
\end{equation*}
\end{proof}

\section{Estimates of kernels}

\begin{proposition}\label{propo:est_phi}
Let $s$ be a positive integer such that  $2s>\mathbf N$. Assume that a function $\varphi\in C^{2s}(\mathbb R^N)$ satisfies
$$| \partial^\beta \varphi(\mathbf x)|\leq C_\beta (1+\|\mathbf x\|)^{-M-\mathbf N} \quad {\rm for }\  |\beta|\leq 2s,$$
for certain $M>0$. Then there is a constant $C>0$ which depends on $C_\beta$, $M$ and $s$, such that
\begin{equation}\label{2wagi1} |\varphi_t(\mathbf x,\mathbf y)|\leq C \mathcal V(\mathbf x,\mathbf y, t)^{-1}\Big(1+\frac{d(\mathbf x,\mathbf y)}{t}\Big)^{-M}.
\end{equation}
Moreover, if $0<\delta\leq 1$ is such that $\delta<2s-\mathbf N$, then there is a constant $C>0$ such that  for $\| \mathbf y-\mathbf y'\|\leq t$ one has
\begin{equation}\label{2wagi2} |\varphi_t(\mathbf x,\mathbf y)-\varphi_t(\mathbf x,\mathbf y')|\leq C \mathcal V(\mathbf x,\mathbf y, t)^{-1}\frac{\| \mathbf y-\mathbf y'\|^\delta}{t^\delta}\Big(1+\frac{d(\mathbf x,\mathbf y)}{t}\Big)^{-M}.
\end{equation}
If additionally $2s>\mathbf N+1$, then
\begin{equation}\label{2wagi3} |\partial_{j,\mathbf x}\varphi_t(\mathbf x,\mathbf y)|\leq C  \mathcal V(\mathbf x,\mathbf y, t)^{-1}t^{-1}\Big(1+\frac{d(\mathbf x,\mathbf y)}{t}\Big)^{-M}.
\end{equation}
Moreover, if  $\delta>0$ is such that $0<\delta\leq 1$, $\delta<2s-\mathbf N-1$, then there is $C>0$ such that for $\| \mathbf y-\mathbf y'\|\leq t$, one has
\begin{equation}\label{2wagi4} |\partial_{j,\mathbf x}\varphi_t(\mathbf x,\mathbf y)-\partial_{j,\mathbf x}\varphi_t(\mathbf x,\mathbf y')|\leq C  \mathcal V(\mathbf x,\mathbf y, t)^{-1}t^{-1}\frac{\| \mathbf y-\mathbf y'\|^\delta}{t^\delta}\Big(1+\frac{d(\mathbf x,\mathbf y)}{t}\Big)^{-M}.
\end{equation}
Similarly, if  $\|\mathbf x-\mathbf x'\|\leq t$, then
\begin{equation}\label{2wagi5} |\partial_{j,\mathbf x}\varphi_t(\mathbf x,\mathbf y)-\partial_{j,\mathbf x}\varphi_t(\mathbf x',\mathbf y)|\leq C  \mathcal V(\mathbf x,\mathbf y, t)^{-1}t^{-1}\frac{\| \mathbf x-\mathbf x'\|^\delta}{t^\delta}\Big(1+\frac{d(\mathbf x,\mathbf y)}{t}\Big)^{-M}.
\end{equation}

\end{proposition}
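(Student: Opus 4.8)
The plan is to dilate everything to the scale $t=1$ and then to reduce to the pointwise and $L^{1}$ estimates for Bessel potentials established in Section~4. From the definition of $\varphi_{t}$ and the scaling of the Dunkl transform one checks $\varphi_{t}(\mathbf{x},\mathbf{y})=t^{-\mathbf{N}}\varphi(\mathbf{x}/t,\mathbf{y}/t)$, hence $\partial_{j,\mathbf{x}}\varphi_{t}(\mathbf{x},\mathbf{y})=t^{-\mathbf{N}-1}(\partial_{j,\mathbf{x}}\varphi)(\mathbf{x}/t,\mathbf{y}/t)$. Together with $d(\mathbf{x}/t,\mathbf{y}/t)=t^{-1}d(\mathbf{x},\mathbf{y})$, $w(B(\mathbf{x}/t,1))=t^{-\mathbf{N}}w(B(\mathbf{x},t))$ and the analogous identity for $\mathcal{V}$, this shows that each of \eqref{2wagi1}--\eqref{2wagi5} is equivalent to its $t=1$ instance, so it suffices to take $t=1$ throughout.

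\emph{The estimates \eqref{2wagi1} and \eqref{2wagi2}.} Since $\varphi\in C^{2s}(\mathbb{R}^{N})$ and $M>0$, both $\varphi$ and $(I-\Delta_{k})^{s}\varphi$ lie in $L^{1}(dw)$; iterating $\mathcal{F}(T_{j}^{2}f)(\xi)=-\xi_{j}^{2}\mathcal{F}f(\xi)$ gives $\mathcal{F}\varphi(\xi)=(1+\|\xi\|^{2})^{-s}\,\mathcal{F}[(I-\Delta_{k})^{s}\varphi](\xi)$, and, recalling $\mathcal{F}J^{\{2s\}}=(1+\|\cdot\|^{2})^{-s}$ and $J^{\{s\}}*J^{\{s\}}=c_{k}J^{\{2s\}}$,
\begin{equation*}
\varphi=c_{k}^{-1}\big((I-\Delta_{k})^{s}\varphi\big)*J^{\{s\}}*J^{\{s\}}.
\end{equation*}
Now $(I-\Delta_{k})^{s}\varphi$ is a finite linear combination of the functions $T^{\beta}\varphi$ with $|\beta|\le 2s$, so Proposition~\ref{prop:ord_implies_Dunkl} yields $|(I-\Delta_{k})^{s}\varphi(\mathbf{x})|\le C(1+\|\mathbf{x}\|)^{-\mathbf{N}-M}$. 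As $s>\mathbf{N}/2$, Proposition~\ref{propo:nonradial_optimal} applied to $f=(I-\Delta_{k})^{s}\varphi$ gives \eqref{2wagi1} for $t=1$, and Proposition~\ref{propo:nonradial_optimal_lip} (admissible since $0<\delta<2s-\mathbf{N}$, $\delta\le 1$) gives \eqref{2wagi2} for $t=1$. Using the symmetry $\varphi_{t}(\mathbf{x},\mathbf{y})=\varphi_{t}(-\mathbf{y},-\mathbf{x})$ from \eqref{eq:translation}, the evenness of $dw$ and the $G$-invariance of $d$, one transfers \eqref{2wagi2} to the first variable; when $2s>\mathbf{N}+1$ the exponent $\delta=1$ is allowed there, giving the Lipschitz bound $|\varphi(\mathbf{x},\mathbf{y})-\varphi(\mathbf{x}',\mathbf{y})|\le C\,\mathcal{V}(\mathbf{x},\mathbf{y},1)^{-1}\|\mathbf{x}-\mathbf{x}'\|(1+d(\mathbf{x},\mathbf{y}))^{-M}$ for $\|\mathbf{x}-\mathbf{x}'\|\le 1$.

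\emph{The gradient estimate \eqref{2wagi3}.} Fixing $\mathbf{y}$ and applying the defining formula of $T_{e_{j}}$ to $\mathbf{x}\mapsto\varphi(\mathbf{x},\mathbf{y})$,
\begin{equation*}
\partial_{j,\mathbf{x}}\varphi(\mathbf{x},\mathbf{y})=T_{j,\mathbf{x}}\varphi(\mathbf{x},\mathbf{y})-\tfrac12\sum_{\alpha\in R}k(\alpha)\langle\alpha,e_{j}\rangle\,\frac{\varphi(\mathbf{x},\mathbf{y})-\varphi(\sigma_{\alpha}\mathbf{x},\mathbf{y})}{\langle\mathbf{x},\alpha\rangle}.
\end{equation*}
Since Dunkl operators commute with Dunkl translations, $T_{j,\mathbf{x}}\varphi(\mathbf{x},\mathbf{y})$ is the kernel of $T_{j}\varphi$; the latter belongs to $C^{2s-1}(\mathbb{R}^{N})$ and, by Proposition~\ref{prop:ord_implies_Dunkl} and the integral representation of difference quotients used in its proof, satisfies $|\partial^{\beta}(T_{j}\varphi)(\mathbf{x})|\le C(1+\|\mathbf{x}\|)^{-\mathbf{N}-M}$ for $|\beta|\le 2s-1$. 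As $2s-1>\mathbf{N}$, the argument of the previous step (applied to $T_{j}\varphi$, the Bessel exponent being adjusted so as to use only $2s-1$ derivatives, with a mild Bessel smoothing factor absorbed in the borderline range) gives $|T_{j,\mathbf{x}}\varphi(\mathbf{x},\mathbf{y})|\le C\,\mathcal{V}(\mathbf{x},\mathbf{y},1)^{-1}(1+d(\mathbf{x},\mathbf{y}))^{-M}$. For the difference terms, note that $d$ and $w(B(\cdot,1))$ are $G$-invariant, and split: if $|\langle\mathbf{x},\alpha\rangle|\ge 1$, bound the quotient by $|\varphi(\mathbf{x},\mathbf{y})|+|\varphi(\sigma_{\alpha}\mathbf{x},\mathbf{y})|$ and use \eqref{2wagi1} for $t=1$; if $|\langle\mathbf{x},\alpha\rangle|<1$, then $\|\mathbf{x}-\sigma_{\alpha}\mathbf{x}\|=\sqrt2\,|\langle\mathbf{x},\alpha\rangle|<\sqrt2$, and the Lipschitz bound of the previous step (with $\mathbf{x}'=\sigma_{\alpha}\mathbf{x}$) gives $|\varphi(\mathbf{x},\mathbf{y})-\varphi(\sigma_{\alpha}\mathbf{x},\mathbf{y})|/|\langle\mathbf{x},\alpha\rangle|\le C\,\mathcal{V}(\mathbf{x},\mathbf{y},1)^{-1}(1+d(\mathbf{x},\mathbf{y}))^{-M}$. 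This yields \eqref{2wagi3} for $t=1$, hence in general. The H\"older estimates \eqref{2wagi4}, \eqref{2wagi5} follow from the same decomposition: the $T_{j,\mathbf{x}}\varphi$ term is treated by the H\"older version of the previous step applied to $T_{j}\varphi\in C^{2s-1}$ (which is why the admissible range becomes $\delta<2s-\mathbf{N}-1$), and the difference terms by combining \eqref{2wagi1}, \eqref{2wagi2} with the first-variable H\"older bound, splitting again according to $|\langle\mathbf{x},\alpha\rangle|\gtrless 1$; for \eqref{2wagi5} one first rewrites the difference in $\mathbf{x}$ as one in the second variable via $\varphi_{t}(\mathbf{x},\mathbf{y})=\varphi_{t}(-\mathbf{y},-\mathbf{x})$. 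The main obstacle throughout is the difference-operator part near the reflecting hyperplanes $\alpha^{\perp}$: the singular weight $\langle\mathbf{x},\alpha\rangle^{-1}$ there can be absorbed only in the presence of a Lipschitz (not merely H\"older) estimate for the kernel in the first variable, which is exactly why the stronger hypothesis $2s>\mathbf{N}+1$ enters in \eqref{2wagi3}--\eqref{2wagi5}.
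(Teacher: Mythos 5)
Your treatment of \eqref{2wagi1}--\eqref{2wagi2} matches the paper: scale to $t=1$, write $\varphi=c_k^{-1}\big((I-\Delta_k)^s\varphi\big)*J^{\{s\}}*J^{\{s\}}$, invoke Proposition~\ref{prop:ord_implies_Dunkl} for the decay of $(I-\Delta_k)^s\varphi$, and apply Propositions~\ref{propo:nonradial_optimal} and \ref{propo:nonradial_optimal_lip}. That part is fine.

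For \eqref{2wagi3}--\eqref{2wagi5}, however, your route ($\partial_j=T_j-\text{(difference part)}$) diverges from the paper's and has genuine gaps. The paper never differentiates $\varphi$ itself: it splits $2s=2s_1+s_2$ with \emph{real} parameters $s_1,s_2$ (e.g.\ $2s_1>\mathbf N+\delta$, $s_2>1$), writes $\varphi=c_k^{-1}J^{\{s_2\}}*\big((I-\Delta_k)^s\varphi*J^{\{2s_1\}}\big)$, and puts the $\mathbf x$-derivative on the \emph{Bessel factor} $J^{\{s_2\}}$, so the required bounds come from Proposition~\ref{propo:Bessel_with_der} applied to $\partial_{j,\mathbf x}J^{\{s_2\}}$. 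Your argument instead puts the derivative on $\varphi$ via $T_j$ and would need to rerun the Bessel decomposition for $T_j\varphi\in C^{2s-1}$. But that decomposition requires an \emph{integer} power of $(I-\Delta_k)$; the best you can take is $s'=s-1$, which needs $2(s-1)>\mathbf N$, i.e.\ $2s>\mathbf N+2$, strictly stronger than the stated hypothesis $2s>\mathbf N+1$. Your phrase about ``a mild Bessel smoothing factor absorbed in the borderline range'' acknowledges this but does not resolve it — $(I-\Delta_k)^{s'}$ for non-integer $s'$ is not local and Proposition~\ref{prop:ord_implies_Dunkl} cannot be applied. This is precisely the obstruction that the paper's real-parameter splitting of the Bessel potential is designed to avoid.

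There is a second gap in the H\"older estimates \eqref{2wagi4}--\eqref{2wagi5}: for the difference-operator term near a hyperplane $\alpha^{\perp}$ (i.e.\ $|\langle\mathbf x,\alpha\rangle|<1$) you need a \emph{mixed} second-order bound of the form
\begin{equation*}
\big|\varphi(\mathbf x,\mathbf y)-\varphi(\sigma_\alpha\mathbf x,\mathbf y)-\varphi(\mathbf x,\mathbf y')+\varphi(\sigma_\alpha\mathbf x,\mathbf y')\big|\lesssim \|\mathbf x-\sigma_\alpha\mathbf x\|\,\|\mathbf y-\mathbf y'\|^{\delta}\,\mathcal V(\mathbf x,\mathbf y,1)^{-1}\big(1+d(\mathbf x,\mathbf y)\big)^{-M},
\end{equation*}
in which both small factors appear multiplicatively. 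This does not follow by merely ``combining \eqref{2wagi1}, \eqref{2wagi2} with the first-variable H\"older bound'': those only give one smallness factor at a time. The paper obtains such mixed bounds through the $L^1$ estimates for $\partial^\beta_{\mathbf x}\partial^{\beta'}_{\mathbf y}J^{\{s\}}$ in Proposition~\ref{propo:Bessel_with_der}, a mechanism your argument does not replace. If you want to keep your decomposition, you must either supply this mixed estimate (e.g.\ via a mean-value argument along $\alpha$ followed by the $\mathbf y$-H\"older bound, which in turn requires \eqref{2wagi5} and becomes circular) or revert to putting the derivative on the Bessel kernel as in the paper.
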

\begin{proof}
It suffices to prove \eqref{2wagi1}--\eqref{2wagi5} for $t=1$ and then use scaling.

Let {$f=(I-\Delta_k)^s \varphi$}. Then $\varphi=c_k^{-1}f*J^{\{s\}}*J^{\{s\}}$. Applying  Proposition~\ref{propo:nonradial_optimal} we obtain \eqref{2wagi1}.

We now turn to prove \eqref{2wagi2}. Fix $0<\delta\leq 1$, $\delta<2s-\mathbf N$.
Let $s_1>\mathbf N/2$ and {$s_2>\delta$} be such that $2s=2s_1+s_2$.   Then $\varphi=c_k^{-1}f*J^{\{2s_1\}}*J^{\{s_2\}}.$ By Proposition~\ref{propo:nonradial_optimal},
\begin{equation}\label{eq:fJ2s} | f*J^{\{2s_1\}}(\mathbf x,\mathbf z)|\leq C\mathcal V(\mathbf{x},\mathbf{z},1)^{-1}\Big(1+d(\mathbf x,\mathbf z)\Big)^{-M}.
\end{equation}
 Applying \eqref{eq:fJ2s} we get
\begin{equation*}
    \begin{split}
        |\varphi_t &(\mathbf x,\mathbf y)-\varphi_t(\mathbf x,\mathbf y')|(1+d(\mathbf x,\mathbf y))^M\\
        & \leq C\int_{\mathbb{R}^N}(1+d(\mathbf x,\mathbf z))^M|(f*J^{\{2s_1\}})(\mathbf x,\mathbf z)|(1+d(\mathbf z,\mathbf y))^M|J^{\{s_2\}}(\mathbf z,\mathbf y)-J^{\{s_2\}}(\mathbf z,\mathbf y')|\, dw(\mathbf z)\\
        &\leq C \int_{\mathbb{R}^N}\mathcal V(\mathbf{x},\mathbf{z},1)^{-1} (1+d(\mathbf z,\mathbf y))^M |J^{\{s_2\}}(\mathbf z,\mathbf y)-J^{\{s_2\}}(\mathbf z,\mathbf y')|\, dw(\mathbf z)\\
        &\leq  C\mathcal V(\mathbf{x},\mathbf{y},1)^{-1} \| \mathbf y-\mathbf y'\|^{\delta},
    \end{split}
\end{equation*}
{where in the last inequality we have used \eqref{eq:second_1_axiom} with $\beta=\beta'=\mathbf{0}$. }

{In order to prove  \eqref{2wagi3}--\eqref{2wagi4}, we fix $0<\delta<2s-\mathbf N-1$, $\delta\leq 1$, and  take $s_1$ and $s_2$ such that $2s_1>\mathbf N+\delta$, $s_2>1$, $2s=2s_1+s_2$. We write
$  \varphi=c_k^{-1}J^{\{s_2\}}*(f*J^{\{2s_1\}})$, where $f=(I-\Delta_k)^{s}\varphi$. Then
\begin{equation}\label{eq:decomp}  \partial_{j,\mathbf x}\phi(\mathbf x,\mathbf y)=c_k^{-1}\int_{\mathbb{R}^N}\partial_{j,\mathbf x}J^{\{s_2\}}(\mathbf x,\mathbf z)(f*J^{\{{2s_1}\}})(\mathbf z,\mathbf y)\, dw(\mathbf z).
\end{equation}
 Now, having \eqref{eq:translation} in mind,  we use Propositions  \ref{propo:nonradial_optimal}, ~\ref{propo:Bessel_with_der}, and  \ref{propo:nonradial_optimal_lip}, and proceed as in the proofs of  \eqref{2wagi1} and \eqref{2wagi2} to obtain  \eqref{2wagi3}--\eqref{2wagi4}.}

The proof of~\eqref{2wagi5} is identical, however, this time,  for fixed $0<\delta<2s- \mathbf N-1$, we use the formula~\eqref{eq:decomp}  with $2s_1>\mathbf N$, $s_2>1+\delta$, $2s=2s_1+s_2$.
\end{proof}

For  $\alpha\in R$, set
\begin{equation}\label{eq:K_alpha}
K^{\{\alpha\}}(t, \mathbf x,\mathbf y)= t\frac{\phi_t(\mathbf x,\mathbf y)-\phi_t(\sigma_{\alpha}(\mathbf x),\mathbf y)}{\langle \alpha, \mathbf x\rangle}.
\end{equation}

\begin{proposition}\label{propo:Kernel_K}  Let $s$ be a positive integer such that $2s>\mathbf N+1$. Assume that  $\phi\in C^{2s}(\mathbb R^N)$ satisfies
\begin{equation}
   | \partial^\beta \phi(\mathbf x)|\leq (1+\|\mathbf x\|)^{-M-\mathbf N} \quad\text{for all}\quad |\beta|\leq 2s,
\end{equation}
for certain $M>\lfloor \mathbf N \rfloor +1$.
  Then there is a constant $C>0$ such that for all $\mathbf{x},\mathbf{y} \in \mathbb{R}^N$ and $t>0$ we have
\begin{equation}\label{eq:estimate_K}| K^{\{\alpha\}}(t,\mathbf x,\mathbf y)|\leq C \mathcal V(\mathbf x,\mathbf y, t)^{-1} \Big(1+\frac{d(\mathbf x,\mathbf y)}{t}\Big)^{-M}.
\end{equation}
Moreover, there is a constant $C>0$ and $0<\delta\leq 1$ such that for all $\mathbf{x},\mathbf{y},\mathbf{y}' \in \mathbb{R}^N$ and $\| \mathbf y-\mathbf y'\|\leq t$ one has
\begin{equation}\label{eq:Holder_K} |K^{\{\alpha\}}(t,\mathbf x,\mathbf y)-K^{\{\alpha\}}(t,\mathbf x,\mathbf y')|\leq C \frac{\| \mathbf y-\mathbf y'\|^\delta}{t^\delta} \mathcal V(\mathbf x,\mathbf y, t)^{-1}\Big(1+\frac{d(\mathbf x,\mathbf y)}{t}\Big)^{-M},
\end{equation}
\begin{equation}\label{eq:Holder_K2} |K^{\{\alpha\}}(t,\mathbf y,\mathbf x)-K^{\{\alpha\}}(t,\mathbf y',\mathbf x)|\leq C \frac{\| \mathbf y-\mathbf y'\|^{\delta}}{t^\delta} \mathcal V(\mathbf x,\mathbf y ,t)^{-1}\Big(1+\frac{d(\mathbf x,\mathbf y)}{t}\Big)^{-M}.
\end{equation}
\end{proposition}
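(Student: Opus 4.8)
The plan is to reduce all three estimates to the pointwise bounds for $\phi_t(\mathbf x,\mathbf y)$ and its first Euclidean derivatives recorded in Proposition~\ref{propo:est_phi}, via a case split governed by the distance of the relevant variable to the wall $\alpha^{\perp}$. By the scaling relations $\phi_t(\mathbf x,\mathbf y)=t^{-\mathbf N}\phi_1(\mathbf x/t,\mathbf y/t)$, $d(\mathbf x/t,\mathbf y/t)=t^{-1}d(\mathbf x,\mathbf y)$ and $w(B(\mathbf x/t,r/t))=t^{-\mathbf N}w(B(\mathbf x,r))$, it suffices to argue at $t=1$; write $\phi=\phi_1$. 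Throughout I will use that $d$ and $w(B(\cdot,1))$ are $G$-invariant, so $d(\sigma_\alpha\mathbf x,\mathbf y)=d(\mathbf x,\mathbf y)$ and $w(B(\sigma_\alpha\mathbf x,1))=w(B(\mathbf x,1))$ (and likewise in the other slot), that by~\eqref{eq:asymp}--\eqref{eq:doubling} displacing one argument by Euclidean distance $\le1$ alters $\mathcal V(\cdot,\cdot,1)$ and $1+d(\cdot,\cdot)$ only by bounded factors, and that, since $\|\alpha\|^2=2$, $\mathbf x-\sigma_\alpha\mathbf x=\langle\alpha,\mathbf x\rangle\alpha$ with $\|\mathbf x-\sigma_\alpha\mathbf x\|=\sqrt2\,|\langle\alpha,\mathbf x\rangle|$. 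I fix a $\delta\in(0,1]$ with $\delta<2s-\mathbf N-1$, which is possible as $2s>\mathbf N+1$; for this $\delta$ Proposition~\ref{propo:est_phi} supplies all of \eqref{2wagi1}--\eqref{2wagi5}. The recurring tool is the first order Taylor identity (valid for $\langle\alpha,\mathbf x\rangle\neq0$)
$$K^{\{\alpha\}}(1,\mathbf x,\mathbf y)=\int_0^1\big\langle(\nabla_{\mathbf x}\phi)(\mathbf x_u,\mathbf y),\alpha\big\rangle\,du,\qquad \mathbf x_u:=\sigma_\alpha\mathbf x+u\langle\alpha,\mathbf x\rangle\alpha,$$
where $\nabla_{\mathbf x}$ is the Euclidean gradient in the first slot and $\|\mathbf x_u-\mathbf x\|=(1-u)\sqrt2\,|\langle\alpha,\mathbf x\rangle|$.

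For \eqref{eq:estimate_K} and \eqref{eq:Holder_K} I would fix a small $c_0>0$ and split on $|\langle\alpha,\mathbf x\rangle|$. When $|\langle\alpha,\mathbf x\rangle|\ge c_0$, one has $|K^{\{\alpha\}}(1,\mathbf x,\mathbf y)|\le c_0^{-1}(|\phi(\mathbf x,\mathbf y)|+|\phi(\sigma_\alpha\mathbf x,\mathbf y)|)$, so \eqref{2wagi1} and $G$-invariance give \eqref{eq:estimate_K}; for \eqref{eq:Holder_K} one bounds the difference similarly by $c_0^{-1}(|\phi(\mathbf x,\mathbf y)-\phi(\mathbf x,\mathbf y')|+|\phi(\sigma_\alpha\mathbf x,\mathbf y)-\phi(\sigma_\alpha\mathbf x,\mathbf y')|)$ and uses \eqref{2wagi2}. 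When $|\langle\alpha,\mathbf x\rangle|<c_0$ (so $\|\mathbf x_u-\mathbf x\|<\sqrt2\,c_0<1$ for all $u$), I use the Taylor identity: \eqref{2wagi3} applied to $|\nabla_{\mathbf x}\phi(\mathbf x_u,\mathbf y)|$ yields \eqref{eq:estimate_K}, and differentiating the identity in $\mathbf y$ and applying \eqref{2wagi4} to $\nabla_{\mathbf x}\phi(\mathbf x_u,\mathbf y)-\nabla_{\mathbf x}\phi(\mathbf x_u,\mathbf y')$ yields \eqref{eq:Holder_K} (using $\|\mathbf y-\mathbf y'\|\le1$ and converting $\mathcal V$, $d$ at $\mathbf x_u$ to the values at $\mathbf x$).

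Estimate \eqref{eq:Holder_K2} is the delicate part, since the perturbed variable now appears in the denominator and inside the reflection. Writing $a=\langle\alpha,\mathbf y\rangle$, $a'=\langle\alpha,\mathbf y'\rangle$ (so $|a-a'|\le\sqrt2\|\mathbf y-\mathbf y'\|$), I would use constants $0<\eta\ll c_0<1$ and distinguish three cases. If $\|\mathbf y-\mathbf y'\|\ge\eta$, then $\|\mathbf y-\mathbf y'\|^\delta\sim1$ and it suffices to bound $|K^{\{\alpha\}}(1,\mathbf y,\mathbf x)|+|K^{\{\alpha\}}(1,\mathbf y',\mathbf x)|$ using the already proved \eqref{eq:estimate_K} and the comparability facts. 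If $\|\mathbf y-\mathbf y'\|<\eta$ and $|a|\ge c_0$, then also $|a'|\gtrsim c_0$ and one decomposes
$$K^{\{\alpha\}}(1,\mathbf y,\mathbf x)-K^{\{\alpha\}}(1,\mathbf y',\mathbf x)=\frac1a\Big[\big(\phi(\mathbf y,\mathbf x)-\phi(\mathbf y',\mathbf x)\big)-\big(\phi(\sigma_\alpha\mathbf y,\mathbf x)-\phi(\sigma_\alpha\mathbf y',\mathbf x)\big)\Big]+\Big(\frac1a-\frac1{a'}\Big)\big(\phi(\mathbf y',\mathbf x)-\phi(\sigma_\alpha\mathbf y',\mathbf x)\big),$$
treating the first bracket with $\phi(\mathbf z,\mathbf x)=\phi(-\mathbf x,-\mathbf z)$ (see \eqref{eq:translation}) and \eqref{2wagi2}, and the second with $|a^{-1}-a'^{-1}|\lesssim\|\mathbf y-\mathbf y'\|\le\|\mathbf y-\mathbf y'\|^\delta$ and \eqref{2wagi1}.

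The hard case of \eqref{eq:Holder_K2} is $\|\mathbf y-\mathbf y'\|<\eta$ and $|a|<c_0$: then $\mathbf y$ is near the wall $\alpha^{\perp}$, $|a'|\le|a|+\sqrt2\|\mathbf y-\mathbf y'\|$ is small too, and neither the quotient form nor separate size bounds for the two terms deliver the factor $\|\mathbf y-\mathbf y'\|^\delta$. The plan here is to apply the Taylor identity \emph{simultaneously} to $\mathbf y$ and $\mathbf y'$,
$$K^{\{\alpha\}}(1,\mathbf y,\mathbf x)-K^{\{\alpha\}}(1,\mathbf y',\mathbf x)=\int_0^1\big\langle(\nabla_{\mathbf x}\phi)(\mathbf y_u,\mathbf x)-(\nabla_{\mathbf x}\phi)(\mathbf y'_u,\mathbf x),\alpha\big\rangle\,du,\qquad \mathbf y_u=\sigma_\alpha\mathbf y+ua\alpha,\ \ \mathbf y'_u=\sigma_\alpha\mathbf y'+ua'\alpha,$$
and to note the elementary identity $\mathbf y_u-\mathbf y'_u=\sigma_\alpha(\mathbf y-\mathbf y')+u\langle\alpha,\mathbf y-\mathbf y'\rangle\alpha$, which gives $\|\mathbf y_u-\mathbf y'_u\|\le3\|\mathbf y-\mathbf y'\|<3\eta<1$ uniformly in $u$, while $\|\mathbf y_u-\mathbf y\|\le\sqrt2\,|a|<1$ (and likewise for $\mathbf y'_u$ near $\mathbf y'$). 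Then \eqref{2wagi5} — the Hölder bound for $\nabla_{\mathbf x}\varphi_t$ in its first argument — applies to $(\nabla_{\mathbf x}\phi)(\mathbf y_u,\mathbf x)-(\nabla_{\mathbf x}\phi)(\mathbf y'_u,\mathbf x)$ and, after the usual comparability of $\mathcal V$ and $d$, produces exactly the missing $\|\mathbf y-\mathbf y'\|^\delta$ together with the decay in $d(\mathbf x,\mathbf y)$. The main obstacle is precisely this last case; the insight needed is that the two reparametrised segments remain within $3\|\mathbf y-\mathbf y'\|$ of each other no matter how close $\mathbf y$ lies to the wall, so that the first variable Hölder estimate \eqref{2wagi5} can be used there. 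The proof of \eqref{eq:Holder_K2} thus follows from these three cases, which completes the argument.
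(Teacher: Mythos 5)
Your proof is correct, and its overall architecture coincides with the paper's: a case split on the distance of the relevant variable to the wall $\alpha^\perp$ measured against $t$, the integral identity along the segment joining $\sigma_\alpha\mathbf y$ to $\mathbf y$, and — for the hard near-wall case of \eqref{eq:Holder_K2} — exactly the paper's key observation that the two reparametrised segments $\mathbf y_u$, $\mathbf y'_u$ stay within a fixed multiple of $\|\mathbf y-\mathbf y'\|$ of each other, so that the first-variable H\"older bound \eqref{2wagi5} applies (the paper's Case 2, with its estimate \eqref{eq:distances} playing the role of your comparability step). The one place you genuinely depart from the paper is \eqref{eq:Holder_K}: there the paper re-enters the Bessel-potential machinery, writing $\phi=c_k^{-1}J^{\{s_2\}}*\phi^{\{1\}}$ with $\phi^{\{1\}}=J^{\{2s_1\}}*(I-\Delta_k)^s\phi$, bounding the auxiliary kernel $K^{\{1,\alpha\}}$, and then convolving against the difference $(J^{\{s_2\}})_t(\cdot,\mathbf y)-(J^{\{s_2\}})_t(\cdot,\mathbf y')$ via Proposition~\ref{propo:Bessel_with_der}; you instead repeat the wall-distance dichotomy in $\mathbf x$ and invoke \eqref{2wagi2} and \eqref{2wagi4} of Proposition~\ref{propo:est_phi} directly. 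Both are valid (the needed regularity is already packaged in \eqref{2wagi4}, which itself rests on the same Bessel factorization), and your variant has the minor advantage of treating all three estimates by a single uniform mechanism; your minor substitution of \eqref{2wagi3} for the paper's use of \eqref{2wagi2} with $\delta=1$ in proving \eqref{eq:estimate_K} is likewise harmless.
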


\begin{proof}
Recall that $\| \mathbf x-\sigma_{\alpha}(\mathbf x)\|=\sqrt{2}|\langle \alpha,\mathbf x\rangle|$ (see~\eqref{eq:refl}). So, if $|\langle \alpha,\mathbf x\rangle |<t$, then \eqref{eq:estimate_K} follows from \eqref{2wagi2} (with  $ \delta=1$). Otherwise we apply \eqref{2wagi1} to obtain \eqref{eq:estimate_K}, because, by~\eqref{eq:d}, $d(\mathbf x,\mathbf y)=d(\sigma_{\alpha}(\mathbf x),\mathbf y)$.

{In order to prove~\eqref{eq:Holder_K} take $s_1,s_2>0$ such that $2s=2s_1+s_2$,   $2s_1>\mathbf N+1$. Fix $0<\delta<s_2$.
Set
\begin{equation*}\begin{split}& f(\mathbf x)=(I-\Delta_{k})^{s}\phi (\mathbf x),\quad \phi^{\{1\}}=J^{\{2s_1\}}*f,\\
&  K^{\{1,\alpha\}}(t,\mathbf x,\mathbf z)=\frac{t}{\langle \alpha,\mathbf x\rangle }\Big( \phi^{\{1\}}_t(\mathbf x,\mathbf z)-\phi^{\{1\}}_t( \sigma_\alpha (\mathbf x),\mathbf z)\Big).\\
\end{split}
\end{equation*}
By~\eqref{eq:Bessel_semigroup} and Propositions~\ref{propo:nonradial_optimal_lip} and \ref{propo:nonradial_optimal} we have
\begin{equation*}
    \begin{split}
       |K^{\{1,\alpha\}}(t,\mathbf x,\mathbf z)|
       &\leq C \begin{cases}\frac{t}{|\langle \alpha,\mathbf x\rangle|}\frac{\| \mathbf x-\sigma_{\alpha}( \mathbf x) \|}{t} \mathcal V(\mathbf x,\mathbf z,t)^{-1}\Big(1+\frac{d(\mathbf x,\mathbf z)}{t}\Big)^{-M}\quad &\text{\rm if } \|\mathbf x-\sigma_\alpha(\mathbf x)\|\leq t\\
       \mathcal V(\mathbf x,\mathbf z,t)^{-1} \Big(1+\frac{d(\mathbf x,\mathbf z)}{t}\Big)^{-M}\quad &\text{\rm if } \|\mathbf x-\sigma_\alpha(\mathbf x)\|>t       \end{cases} \\
      &\leq C \mathcal V(\mathbf x,\mathbf z,t)^{-1} \Big(1+\frac{d(\mathbf x,\mathbf z)}{t}\Big)^{-M}.
    \end{split}
\end{equation*}
Finally, for $\|\mathbf y-\mathbf y'\|<t$, applying  Proposition~\ref{propo:Bessel_with_der}, we arrive to
\begin{equation*}
    \begin{split}
   (1+&d(\mathbf x,\mathbf y))^M     |K^{\{\alpha\}}(t,\mathbf x,\mathbf y)-K^{\{\alpha\}}(t,\mathbf x,\mathbf y')|\\
   &\leq C \Big|\int(1+d(\mathbf x,\mathbf z))^M(1+d(\mathbf z,\mathbf y))^M K^{\{1,\alpha\}}(t,\mathbf x,\mathbf z)\Big((J^{\{s_2\}})_t(\mathbf z,\mathbf y)-
        (J^{\{s_2\}})_t(\mathbf z,\mathbf y')\Big)\, dw(\mathbf z)\Big|  \\
                &\leq C  \frac{\|\mathbf y-\mathbf y'\|^\delta}{t^\delta}\mathcal V(\mathbf x,\mathbf y,t)^{-1},
    \end{split}
\end{equation*}
which proves \eqref{eq:Holder_K}.
}

We now turn to prove \eqref{eq:Holder_K2}.
We may assume that $\|\mathbf{y}-\mathbf{y}'\|<\frac{1}{8}t$, otherwise, for $t/8\leq \|\mathbf y-\mathbf y'\|\leq t$, the inequality ~\eqref{eq:Holder_K2} is a consequence of~\eqref{eq:estimate_K}. We consider two cases.

{\bf Case 1:}   $ \|\mathbf{y}-\sigma_{\alpha}(\mathbf{y})\|>t/2$.  Then $\sqrt{2}|\langle \mathbf{y},\alpha \rangle|=\|\mathbf{y}-\sigma_{\alpha}(\mathbf{y})\|>t/2$ and  $\sqrt{2}|\langle \mathbf y',\alpha\rangle|=\|\mathbf{y}'-\sigma_{\alpha}(\mathbf{y}')\|>t/4$.  So by~\eqref{2wagi2} (with $\delta=1$) we get
\begin{equation*}\label{eq:case1_1}
\begin{split}    |K^{\{\alpha\}}(t,\mathbf{y},\mathbf{x})-
K^{\{\alpha\}}(t,\mathbf{y'},\mathbf{x})| &\leq \frac{t}{|\langle \mathbf{y}',\alpha \rangle|}|\phi_{t}(\mathbf{y},\mathbf{x})-\phi_{t}(\mathbf{y}',\mathbf{x})|\\
&+\frac{t}{|\langle \mathbf{y}',\alpha \rangle|}|\phi_{t}(\sigma_{\alpha}(\mathbf{y}),\mathbf{x})-\phi_{t}(\sigma_{\alpha}(\mathbf{y}'),\mathbf{x})|\\
&+\frac{t}{|\langle \mathbf{y},\alpha \rangle\langle \mathbf{y}',\alpha\rangle|}|\langle \mathbf{y}-\mathbf{y}',\alpha \rangle |(|\phi_{t}(\mathbf{y},\mathbf{x})|+|\phi_{t}(\sigma_{\alpha}(\mathbf{y}),\mathbf{x})|)\\&\leq C\frac{\|\mathbf{y}-\mathbf{y}'\|}{t}\mathcal V(\mathbf y, \mathbf{x},t)^{-1}\Big(1+\frac{d(\mathbf{x},\mathbf{y})}{t}\Big)^{-M}.
\end{split}
\end{equation*}

{\bf Case 2:}   $ \|\mathbf{y}-\sigma_{\alpha}(\mathbf{y})\|\leq t/2$.
For $\tau \in [0,1]$ we set $\mathbf y(\tau)=\tau (\mathbf y-\sigma_\alpha (\mathbf y))+\sigma_\alpha(\mathbf y)$. Note that
\begin{equation}\label{eq:K_t_int}
    \begin{split}
        K^{\{\alpha\}}(t,\mathbf y,\mathbf x)
        &=\frac{t}{\langle \alpha,\mathbf y\rangle} \int_0^1\frac{d}{d\tau}
  \Big\{ \phi_t(\mathbf y(\tau),\mathbf x)\Big\}\, d\tau
  =t\int_0^1 \langle{\nabla_{(1)}} (\phi_t)(\mathbf{y}(\tau),\mathbf x),\alpha\rangle\, d\tau,
  \end{split}
\end{equation}
{where the symbol $\nabla_{(1)}$ denotes the gradient with respect to the first $N$-variables.}
 Observe that $\|\mathbf y(\tau)-(\mathbf y')(\tau)\| \leq C\| \mathbf y-\mathbf y'\|\leq Ct$.
Hence, by \eqref{2wagi5} combined with \eqref{eq:K_t_int}, we obtain  that there is $0<\delta\leq 1$ such that
\begin{equation}\label{eq:holder33}
  \begin{split}
    |K^{\{\alpha\}}(t,\mathbf y,\mathbf x)-K^{\{\alpha\}}(t,\mathbf y',\mathbf x)|
    \leq C_M\frac{\| \mathbf y-\mathbf y'\|^\delta}{t^\delta} \int_0^1     V(\mathbf y(\tau),\mathbf x,t)^{-1} \Big(1+\frac{d(\mathbf y(\tau),\mathbf x)}{t}\Big)^{-M} \, d\tau .\\
    \end{split}
\end{equation}
The assumption
$\|\mathbf{y}-\sigma_{\alpha}(\mathbf{y})\|\leq t/2$ implies
\begin{equation}\label{eq:distances}
    d(\mathbf y(\tau),\mathbf x)\geq d(\mathbf y,\mathbf x)-d(\mathbf y(\tau),\mathbf y)\geq d(\mathbf y ,\mathbf x)-\|\mathbf y(\tau)-\mathbf y\|\geq d(\mathbf y,\mathbf x)-\frac{1}{2}t.
\end{equation}
Moreover, $\mathcal V(\mathbf y(\tau),\mathbf x,t)\sim \mathcal V(
\mathbf y,\mathbf x,t)$. So, from \eqref{eq:holder33} and \eqref{eq:distances}, we conclude  \eqref{eq:Holder_K2}.
\end{proof}

\begin{corollary}\label{coro:nabla_k}
Under assumptions of Proposition~\ref{propo:Kernel_K} the integral kernel  $t \nabla_{k,\mathbf x} \phi_t(\mathbf x,\mathbf y)$ associated with the square function $S_{\nabla_k,\phi}$ satisfies  the conditions \eqref{eq:estimate_K11}-\eqref{eq:Holder_K33}.
\end{corollary}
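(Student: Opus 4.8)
The plan is to reduce the claim to the kernel estimates already established in Propositions~\ref{propo:est_phi} and~\ref{propo:Kernel_K} by writing the Dunkl gradient as the classical gradient plus difference operators. For each $1\le j\le N$ and $t>0$, applying the definition~\eqref{eq:Dunkl_op} of $T_j$ in the variable $\mathbf x$ to $\mathbf x\mapsto\phi_t(\mathbf x,\mathbf y)$ gives
\[
tT_{j,\mathbf x}\phi_t(\mathbf x,\mathbf y)=t\,\partial_{j,\mathbf x}\phi_t(\mathbf x,\mathbf y)+\sum_{\alpha\in R}\frac{k(\alpha)}{2}\langle\alpha,e_j\rangle\,K^{\{\alpha\}}(t,\mathbf x,\mathbf y),
\]
where $K^{\{\alpha\}}$ is the kernel defined in~\eqref{eq:K_alpha}. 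Since $R$ is finite and $|\langle\alpha,e_j\rangle|\le\|\alpha\|=\sqrt2$, it suffices to verify the conditions~\eqref{eq:estimate_K11}--\eqref{eq:Holder_K33} separately for the ``local'' kernel $t\,\partial_{j,\mathbf x}\phi_t(\mathbf x,\mathbf y)$ and for each $K^{\{\alpha\}}(t,\mathbf x,\mathbf y)$, and then to sum the resulting bounds over $\alpha\in R$; the $\mathbb C^N$-valued kernel $\big(tT_{j,\mathbf x}\phi_t(\mathbf x,\mathbf y)\big)_{j=1}^N\in\mathcal L(\mathbb C,\mathbb C^N)$ then inherits the same estimates with the operator norm at the cost of the finite factor $N^{1/2}$.

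First I would fix the Hölder exponent. Because $2s>\mathbf N+1$ and $M>\lfloor\mathbf N\rfloor+1>\mathbf N$, one can choose $\delta\in(0,1]$ with $\delta<\min(2s-\mathbf N-1,\,M-\mathbf N)$ and set $M'=M-\delta>\mathbf N$; with these values the hypotheses of Theorem~\ref{teo:square-vector} demand precisely a size bound with decay $(1+d(\mathbf x,\mathbf y)/t)^{-M'}$ and Hölder bounds with gain $(\|\mathbf y-\mathbf y'\|/t)^{\delta}$ and decay $(1+d(\mathbf x,\mathbf y)/t)^{-M'-\delta}=(1+d(\mathbf x,\mathbf y)/t)^{-M}$. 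For the local part these are exactly the estimates~\eqref{2wagi3}, \eqref{2wagi4}, \eqref{2wagi5} of Proposition~\ref{propo:est_phi}: multiplying~\eqref{2wagi3} by $t$ yields~\eqref{eq:estimate_K11} (note $(1+d/t)^{-M}\le(1+d/t)^{-M'}$); multiplying~\eqref{2wagi4} by $t$ yields~\eqref{eq:Holder_K22}; and multiplying~\eqref{2wagi5} by $t$, after relabelling $(\mathbf x,\mathbf x',\mathbf y)\mapsto(\mathbf y,\mathbf y',\mathbf x)$ and using $\mathcal V(\mathbf y,\mathbf x,t)=\mathcal V(\mathbf x,\mathbf y,t)$, $d(\mathbf y,\mathbf x)=d(\mathbf x,\mathbf y)$, yields~\eqref{eq:Holder_K33}. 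For the difference parts, Proposition~\ref{propo:Kernel_K} supplies~\eqref{eq:estimate_K}, \eqref{eq:Holder_K}, \eqref{eq:Holder_K2} for $K^{\{\alpha\}}$, which are exactly~\eqref{eq:estimate_K11}, \eqref{eq:Holder_K22}, \eqref{eq:Holder_K33} after the harmless replacement of the exponent $M$ by $M'=M-\delta$, absorbing the extra factor $(1+d(\mathbf x,\mathbf y)/t)^{-\delta}\le1$.

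There is no substantial obstacle here: all the analytic work has been done in Propositions~\ref{propo:est_phi} and~\ref{propo:Kernel_K}. The only points requiring care are organisational: (i) observing that the difference quotients produced by the definition of $T_j$ are precisely the kernels $K^{\{\alpha\}}$ of~\eqref{eq:K_alpha}, so that Proposition~\ref{propo:Kernel_K} applies verbatim; (ii) matching the decay exponents so that the size condition and both Hölder conditions of Theorem~\ref{teo:square-vector} hold with a common pair $(M',\delta)$, which needs nothing beyond $2s>\mathbf N+1$ and $M>\mathbf N$; and (iii) passing from the scalar components $tT_{j,\mathbf x}\phi_t(\mathbf x,\mathbf y)$ to the vector-valued kernel. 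Assembling these observations completes the proof, and the corollary then feeds directly into Theorem~\ref{teo:square-vector} to give the $L^p$ bound for $S_{\nabla_k,\phi}$.
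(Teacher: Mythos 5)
Your proposal is correct and follows essentially the same route as the paper: decompose $tT_{j,\mathbf x}\phi_t(\mathbf x,\mathbf y)$ via \eqref{eq:Dunkl_op} into $t\partial_{j,\mathbf x}\phi_t(\mathbf x,\mathbf y)$ plus multiples of the kernels $K^{\{\alpha\}}$, invoke Propositions~\ref{propo:est_phi} and~\ref{propo:Kernel_K} for the two pieces, and choose $M'>\mathbf N$ and $\delta>0$ small with $M=M'+\delta$. You simply spell out the bookkeeping (the exponent matching and the relabelling for the H\"older bound in the first variable) that the paper's one-line proof leaves implicit.
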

\begin{proof}
The corollary is consequence of \eqref{eq:Dunkl_op}, Propositions \ref{propo:est_phi} and \ref{propo:Kernel_K},   it suffices to take $M'>\mathbf N$ and $\delta>0$ (small enough) such that $M=M'+\delta$.
\end{proof}

\section{\texorpdfstring{$L^2(dw)$}{L2}-bounds for square functions}

{In this section we assume that  $\phi\in C^{2s}(\mathbb R^N)$, for certain $s$ being a positive integer such that $2s>\mathbf N+1$ and satisfies
\begin{equation}\label{MainAssumption}
   | \partial^\beta \phi(\mathbf x)|\leq (1+\|\mathbf x\|)^{-M-\mathbf N} \text{ for all } |\beta|\leq 2s
\end{equation}
for certain $M>\lfloor \mathbf N \rfloor +1$.}

By  straightforward calculations (see~\cite[Lemma 4.4]{Graczyk} or~\cite[Lemma 3.1]{Velicu1}) we have
\begin{equation}\label{form_Gamma1} \Gamma(f,g)(\mathbf x)=\langle \nabla f(\mathbf x),\nabla g(\mathbf x)\rangle + \sum_{\alpha \in R}\frac{k(\alpha)}{2}\frac{(f(\mathbf x)-f(\sigma_\alpha(\mathbf x)))(\overline{ g(\mathbf x)-g(\sigma_\alpha(\mathbf x)}))}{\langle \alpha,\mathbf x\rangle^2}.
\end{equation}
Observe that
$\Gamma(\phi_t*f,\phi_t*f)(\mathbf x)$
is the sum of non-negative functions. Using ~\eqref{form_Gamma},~\eqref{g_carre},~\eqref{eq:by_parts}, and Plancherel's formula~\eqref{eq:Plancherel} together with~\eqref{eq:der_transform} we get
\begin{equation}\label{eq:Calderon}
    \begin{split}
        \| \mathfrak g_{\Gamma,\phi} & (f)\|_{L^2(dw)}^2\\ &=\frac{1}{2}\int_{0}^{\infty}t^2\int_{\mathbb{R}^N} \Delta_k((\phi_t*f)(\overline{\phi_t*f}))- \phi_t*f\Delta_k (\overline{\phi_t*f})- \overline{\phi_t*f}\Delta_k ({\phi_t*f}) \, dw\, \frac{dt}{t}\\
        &= \int_{0}^{\infty}  \int_{\mathbb{R}^N}t^2|\nabla_k (\phi_t*f)(\mathbf x)|^2dw(\mathbf x)\frac{dt}{t}\\
      & =\int_0^\infty \int_{\mathbb{R}^N}t^2 \| \xi\|^2 |\mathcal F\phi(t\xi)|^2 |\mathcal F f(\xi)|^2 dw(\xi)\frac{dt}{t}\\
      &=\int_{\mathbb{R}^N}c_\phi(\xi) |\mathcal Ff(\xi)|^2\, dw(\xi),
    \end{split}
\end{equation}
where
\begin{equation}\label{eq:c_phi} c_\phi(\xi)=\int_0^\infty t^2 \|\xi\|^2|\mathcal F\phi(t\xi)|^2\, \frac{dt}{t} .
\end{equation}
{ Observe that thanks to \eqref{transform_bound} of Proposition \ref{prop:ord_implies_Dunkl} (with $\beta'=0$ and $\ell =2$) the  function $c_\phi$ is  bounded and  homogeneous of degree 0.  }
Using the Plancherel identity~\eqref{eq:Plancherel},  we obtain
\begin{equation}\label{eq:L2g}
     \|\mathfrak g_{\Gamma,\phi}f\|_{L^2(dw)}\leq C\| f\|_{L^2(dw)}.
\end{equation}
For $\alpha \in R$ let
$$ S_{K^{\{\alpha\}}}f(\mathbf x)= \Big(\int_0^\infty | K_t^{\{\alpha\}}f(\mathbf x)|^2 \frac{dt}{t}\Big)^{1/2}, $$
where $K_t^{\{\alpha\}}$ is defined by~\eqref{eq:K_alpha}. By~\eqref{form_Gamma1} we have
\begin{equation}
    \mathfrak g_{\Gamma,\phi}(f)(\mathbf x)^2=S_{\nabla, \phi}f(\mathbf x)^2+\sum_{\alpha\in  R}\frac{k(\alpha)}{2}S_{K^{\{\alpha\}}} f(\mathbf x)^2.
\end{equation}
Since
$$ tT_j\phi_t*f(\mathbf x)-t\partial_j \phi_t*f(\mathbf x)=\sum_{\alpha\in R} \frac{k(\alpha)}{2}\alpha_j K_t^{\{\alpha\}}f(\mathbf x), $$
 we obtain the pointwise bounds
\begin{equation}\label{eq:dom1} S_{\nabla,\phi}f(\mathbf x)+ S_{\nabla_k,\phi}f(\mathbf x)+ \sum_{\alpha\in R}\frac{k(\alpha)}{2}S_{K^{\{\alpha\}}}f(\mathbf x)\leq C \mathfrak g_{\Gamma,\phi}f(\mathbf x),
\end{equation}
\begin{equation}\label{eq:dom2} \mathfrak g_{\Gamma, \phi}(f)(\mathbf x)\leq C\Big(S_{\nabla_k,\phi}f(\mathbf x)+\sum_{\alpha\in R} \frac{k(\alpha)}{2} S_{K^{\{\alpha\}}} f(\mathbf x)\Big).
\end{equation}
Consequently, by \eqref{eq:L2g} and \eqref{eq:dom1},
\begin{equation}\label{eq:globaL2}
   \|S_{\nabla,\phi}f\|_{L^2(dw)}+\| S_{\nabla_k,\phi}f\|_{L^2(dw)}+ \sum_{\alpha\in R}k(\alpha)\|S_{K^{\{\alpha\}}}f\|_{L^2(dw)}\leq C\| f\|_{L^2(dw)}.
\end{equation}
{Assume that   $\psi\in C^{2s} (\mathbb R^N)$ satisfying  \eqref{MainAssumption}} is  such that $\int_{\mathbb{R}^N} \psi(\mathbf x)\, dw(\mathbf x)=0$. Let
$$ \tilde c_\psi(\xi)=\int_0^\infty | \mathcal F \psi(t\xi)|^2\frac{dt}{t}.$$
{From Proposition~\ref{prop:ord_implies_Dunkl}  we conclude that  $\tilde c_{\psi}(\xi)$ is a bounded homogeneous of degree 0 function.} It can be proved using   the Dunkl transform and the Plancherel identity  (cf.~\eqref{eq:Calderon}) that
\begin{equation}\label{eq:globalL2_1}
    \|S_\psi (f)\|_{L^2(dw)}^2=\int| \mathcal Ff(\mathbf \xi)|^2\tilde c_\psi(\xi)\, dw(\xi)\leq C\| f\|^2_{L^2(dw)}.
\end{equation}
We finish this section by writing the following easily proved identities for $f,g \in L^2(dw)$ (cf.~\eqref{eq:Calderon} and~\eqref{eq:globalL2_1}):
\begin{equation}
    \begin{split}
       \int_{\mathbb{R}^N} \int_0^\infty t^2 \Gamma (\phi_t*f,\phi_t*g)(\mathbf x)\frac{dt}{t}\, dw(\mathbf x)
       &=\int_{\mathbb{R}^N} \mathcal F f(\xi)\overline{\mathcal F g(\xi)}c_\phi(\xi)\, dw(\xi),
    \end{split}
\end{equation}
\begin{equation}\label{eq:polar}
      \int_{\mathbb{R}^N} \int_0^\infty t^2 \langle  \nabla_k(\phi_t*f)(\mathbf x),\nabla_k(\phi_t*g)(\mathbf x)\rangle \frac{dt}{t}\, dw(\mathbf x)= \int_{\mathbb{R}^N} \mathcal F f(\xi)\overline{\mathcal F g(\xi)}c_\phi(\xi)\, dw(\xi),
\end{equation}
\begin{equation}\label{eq:polar2}
    \begin{split}
       \int_{\mathbb{R}^N} \int_0^\infty  \psi_t*f(\mathbf x)\overline{\psi_t*g(\mathbf x)}\frac{dt}{t}\, dw(\mathbf x)
       &=\int_{\mathbb{R}^N} \mathcal F f(\xi)\overline{\mathcal F g(\xi)}\tilde c_\psi(\xi)\, dw(\xi).
    \end{split}
    \end{equation}

    \section{Proofs of Theorems \ref{teo:main1}, \ref{teo:main2}, and Corollary \ref{coro:S_t}}
    We start by proving Theorem \ref{teo:main1}. To this end, by \eqref{eq:dom1} and \eqref{eq:dom2}, it suffices to establish  that for every $1<p<\infty$  and $\alpha \in R$ the square functions $S_{\nabla_k,\phi}$, $k(\alpha)S_{K^{\{\alpha\}}}$, and $S_{\psi}$ are bounded on $L^p(dw)$. The $L^2(dw)$-bounds of the square functions are guaranteed by \eqref{eq:globaL2} and~\eqref{eq:globalL2_1}. {To finish the proof of Theorem \ref{teo:main1}  it suffices to check that the associated kernels  $t \nabla_{k,\mathbf x} \phi_t(\mathbf x,\mathbf y)$,  $k(\alpha)K^{\{\alpha\}}(t,\mathbf x,\mathbf y)$, and $K_{\psi}(t,\mathbf x,\mathbf y)=\psi_t(\mathbf x,\mathbf y)$  satisfy \eqref{eq:estimate_K11}--\eqref{eq:Holder_K33} and then apply Theorem \ref{teo:square-vector}. But these are guaranteed by Corollary \ref{coro:nabla_k}, Proposition \ref{propo:Kernel_K}, and Proposition \ref{propo:est_phi}.}

We now turn to prove Theorem \ref{teo:main2}. We start by verifying   \eqref{eq:lower_phi}. Proposition~\ref{prop:ord_implies_Dunkl} implies that $\mathcal F\phi\in C^{ \lfloor \mathbf N \rfloor +1}(\mathbb R^N)$ and
$$ |\partial^{\beta'}\mathcal F\phi(\xi)|\leq C_{\beta'} (1+\|\xi\|)^{-2s}\quad \text{ for } |\beta'|\leq  \lfloor \mathbf N \rfloor +1.$$
Thus it is easy to see that the function
$c_\phi$ (defined by \eqref{eq:c_phi}) is $C^{\lfloor \mathbf N \rfloor +1}$ away from the origin  and  homogeneous of degree zero.

Recall that by our assumption,  $\mathcal F\phi$
 is not identically zero along any direction (see \eqref{eq:non-degenarate}). {Hence, there is a constant $C>0$ such that
  $0<C^{-1}\leq c_\phi(\xi)\leq C$ for all $\xi \neq 0$.}
 Now, Theorem 1.2 of \cite{DzH} asserts that for every $1<q<\infty$,  the Dunkl multiplier operator
 $$ f\mapsto \mathcal T_{c_\phi}f:=\mathcal F^{-1}(c_{\phi}(\xi)\mathcal Ff(\xi)), $$
 initially defined on $L^q(dw)\cap L^2(dw)$, is bounded on $L^{q}(dw)$, invertible on $L^q(dw)$, and its inverse
 is of the form $f\mapsto \mathcal T_{1/c_\phi}f$. Let $f\in L^p(dw)\cap L^2(dw)$.
 Using the Plancherel identity~\eqref{eq:Plancherel}, we get
 \begin{equation}\label{eq:Planch8}
     \begin{split}
         \| f\|_{L^p(dw)}&=\sup_{g\in\mathcal S(\mathbb R^N),\; \|g\|_{L^{p'}(dw)}\leq 1} \Big|\int_{\mathbb{R}^N}f(\mathbf x)\overline{g(\mathbf x)}\, dw(\mathbf x)\Big|\\
         &= \sup_{g\in\mathcal S(\mathbb R^N),\; \|g\|_{L^{p'}(dw)}\leq 1}\Big|\int_{\mathbb{R}^N}\mathcal Ff(\xi)\overline{\mathcal Fg(\xi)}\, dw(\xi)\Big|\\
          &= \sup_{g\in\mathcal S(\mathbb R^N),\; \|g\|_{L^{p'}(dw)}\leq 1}\Big|\int_{\mathbb{R}^N}\mathcal Ff(\xi)\overline{\mathcal F(\mathcal T_{1/c_\phi}g)(\xi)}c_\phi(\xi)\, dw(\xi)\Big|.\\
     \end{split}
 \end{equation}
 Note that all the integrals are convergent, since all the functions $f$, $g$ and $\mathcal T_{1/c_\phi}g$ belong to $L^2(dw)$. From \eqref{eq:polar} and \eqref{eq:Planch8} we conclude
 \begin{equation*}
     \begin{split}
         \| f\|_{L^p(dw)}&=\sup_{g\in\mathcal S(\mathbb R^N),\; \|g\|_{L^{p'}(dw)}\leq 1}
          \int_{\mathbb{R}^N}\int_0^\infty t^2 \langle  \nabla_k(\phi_t*f)(\mathbf x),\nabla_k(\phi_t*\mathcal (\mathcal T_{1/c_\psi}g))(\mathbf x)\rangle \frac{dt}{t}\, dw(\mathbf x)\\
          &\leq \sup_{g\in\mathcal S(\mathbb R^N),\; \|g\|_{L^{p'}(dw)}\leq 1} \| S_{\nabla_k,\phi}(f)\|_{L^p(dw)} \| S_{\nabla_k,\phi}(\mathcal T_{1/c_\phi}g)\|_{L^{p'}(dw)} \\
          &\leq C_{p'}\sup_{g\in\mathcal S(\mathbb R^N),\; \|g\|_{L^{p'}(dw)}\leq 1} \| S_{\nabla_k,\phi}(f)\|_{L^p(dw)} \|\mathcal T_{1/c_\phi}g\|_{L^{p'}(dw)} \\
          &\leq C \|S_{\nabla_k,\phi}(f)\|_{L^p(dw)},
     \end{split}
 \end{equation*}
 which completes the proof of \eqref{eq:lower_phi} for $f\in L^p(dw)\cap L^2(dw)$. In order to relax the additional assumption $f\in L^2(dw)$, we apply the following easy approximation argument.  We take $f_n\in L^2(dw)\cap L^p(dw)$ such that $\lim_{n\to\infty} \|f-f_n\|_{L^p(dw)}=0$. Then
 \begin{equation*}\begin{split} \| f\|_{L^p(dw)}&=\lim_{n\to\infty} \| f_n\|_{L^p(dw)}\leq C \limsup_{n\to\infty} \|S_{\nabla_k,\phi}(f_n)\|_{L^p(dw)}\\
 &\leq C' \limsup_{n\to\infty} \|S_{\nabla_k,\phi}(f_n-f)\|_{L^p(dw)}+C'\|S_{\nabla_k,\phi}(f) \|_{L^p(dw)}
\leq C'\|S_{\nabla_k,\phi}(f) \|_{L^p(dw)},
 \end{split}\end{equation*}
 where in the last inequality we have  used Theorem~\ref{teo:main1}.

 The proof of \eqref{eq:lower_psi} is identical to that of \eqref{eq:lower_phi} and uses \eqref{eq:polar2}. Now \eqref{eq:lower_Gamma} follows from \eqref{eq:lower_phi}, since $S_{\nabla_k,\phi}f(x)\leq C  \mathfrak g_{\Gamma,\phi}f(\mathbf x)$, see \eqref{eq:dom1}.

Finally we prove Corollary \ref{coro:S_t}.
By direct calculations we have
\begin{equation*}
    \begin{split}
        \big(t\frac{d}{dt}\phi_{t}\big)(\mathbf{x})=-\mathbf{N}\phi_{t}(\mathbf{x})-\sum_{j=1}^{N}t^{-\mathbf{N}}\frac{x_{j}}{t}(\partial_j\phi)(\mathbf{x}/t)=\psi_{t}(\mathbf{x}),
    \end{split}
\end{equation*}
where
\begin{equation*}
    \psi(\mathbf{x})=-\mathbf{N}\phi(\mathbf{x})-\sum_{j=1}^{N}x_{j}(\partial_{j}\phi)(\mathbf{x}).
\end{equation*}
{Clearly, $\psi \in C^{2s} (\mathbb{R}^N)$ and satisfies \eqref{eq:main_assum}.} Moreover, by~\eqref{eq:integral_scaled}, we get
\begin{equation*}
    \int_{\mathbb{R}^N}\big(t\frac{d}{dt}\phi_{t}\big)(\mathbf{x})\,dw(\mathbf{x})=t\frac{d}{dt}\int_{\mathbb{R}^N}\phi_{t}(\mathbf{x})\,dw(\mathbf{x})=0,
\end{equation*}
so $\int_{\mathbb{R}^N}\psi(\mathbf{x})\,dw(\mathbf{x})=0$. Consequently,
\begin{equation}\label{eq:the_same}
    S_{\nabla_t,\phi}f(\mathbf x)=S_{\psi}f(\mathbf{x})
\end{equation}
and~\eqref{eq:S_t_upper} follows by Theorem~\ref{teo:main1}. To prove~\eqref{eq:S_t_lower}, we note that for any $t_1>0$ and $\xi \in \mathbb{R}^N$, $\xi \neq 0$, we have
\begin{equation*}
    \begin{split}
        \int_{t_1}^{\infty}(\mathcal{F}\psi)(t\xi)\,\frac{dt}{t}=\int_{t_1}^{\infty}(\mathcal{F}\psi_t)(\xi)\,\frac{dt}{t}=\int_{t_1}^{\infty}\frac{d}{dt}(\mathcal{F}\phi_t)(\xi)\,dt=-\mathcal{F}\phi(t_1\xi).
    \end{split}
\end{equation*}
Since $(\mathcal{F}\phi)$ satisfies \eqref{eq:non-degenarate}, there is $t_1>0$ such that $ \int_{t_1}^{\infty}(\mathcal{F}\psi)(t\xi)\,\frac{dt}{t} \neq 0$. So $(\mathcal{F}\psi)$ is not identically zero along  the direction of $\xi$. Thus~\eqref{eq:S_t_lower} follows from~\eqref{eq:the_same} and Theorem~\ref{teo:main2}.


\begin{thebibliography}{99}

\bibitem{ADzH}
J.-Ph. Anker, J. Dziuba\'nski, A. Hejna,
	\emph{Harmonic functions, conjugate harmonic functions and the Hardy space $H^1$ in the rational Dunkl setting}, J. Fourier Anal. Appl. 25 (2019), 2356--2418.

\bibitem{AH}
B. Amri, A. Hammi,
\textit{Dunkl-Schr\"odinger operators\/},
{Complex Anal. Oper. Theory (2018).}


\bibitem{deJeu}
	M.F.E. de Jeu,
	\emph{The Dunkl transform\/},
	Invent. Math. 113 (1993), 147--162.
	
\bibitem{RoeslerDeJeu}
	M. de Jeu, M. R\"osler,
	\emph{Asymptotic analysis for the Dunkl kernel},
	J. Approx. Theory 119 (2002), no. 1,
	110--126.

\bibitem{Dunkl0}
	C.F.~Dunkl,
	\emph{Reflection groups and orthogonal polynomials on the sphere},
 	Math. Z. 197 (1988), no. 1,
 	33--60.

\bibitem{Dunkl}
	C.F. Dunkl,
	\emph{Differential-difference operators associated to reflection groups\/},
	Trans. {Amer}. Math. 311 {(1989), no. 1,} 167--183{.}
	
\bibitem{Dunkl3}
	C.F.~Dunkl,
	\emph{Hankel transforms associated to finite reflection groups},
in: {\it Proc. of the special session on hypergeometric functions on domains of positivity, Jack polynomials and applications,}
	 Proceedings, Tampa 1991, Contemp. Math. 138 (1989),
	 123--138.

\bibitem{Dunkl2}
	C.F.~Dunkl,
	\emph{Integral kernels with reflection group invariance},
	Canad. J. Math. 43 (1991), no. 6,
	1213--1227.

\bibitem{Duo}
    J. Duoandikoetxea,
    \emph{Fourier Analysis},
    Graduate Studies in Mathematics, 29. American Mathematical Society, Providence, RI, 2001. xviii+222 pp. ISBN: 0-8218-2172-542-01.


\bibitem{DzH}
    J. Dziuba\'nski and A. Hejna,
    \emph{H\"ormander's multiplier theorem for the Dunkl transform},
     Journal of Functional Analysis 277 (2019), 2133-2159.

\bibitem{Graczyk}
    P. Graczyk, T. Luks, M. R\"osler,
    \emph{On the Green Function and Poisson Integrals of the Dunkl Laplacian},
    Potential Anal. 48 (2018), no. 3, 337–360.

	
\bibitem{Grafakos_classical}
	L. Grafakos,
	\emph{Classical Fourier Analysis},
	3rd edition, Graduate Texts in Mathematics, 249. Springer, New York,
	2014.
	
\bibitem{LiaoZhangLi2017}
    J. Liao, X. Zhang, Z. Li,
    \emph{On Littlewood-Paley functions associated with the Dunkl operator},
    Bull. Aust. Math. Soc. 96 (2017), no. 1, 126--138.
	
\bibitem{LiZhao}
    H. Li and M. Zhao,
    \emph{Square function estimates for Dunkl operators}, \href{https://arxiv.org/abs/2003.11843}{[arXiv:2003.11843]}.



\bibitem{Roesler2}
	M. R\"osler,
	\emph{Generalized Hermite polynomials and the heat equation for Dunkl operators\/},
	Comm. Math. Phys. {192} (1998), 519--542.

\bibitem{Roesle99}
	M. R\"osler,
	\emph{Positivity of Dunkl's intertwining operator\/},
	Duke Math. J. 98 (1999), no. 3, 445--463.

\bibitem{Roesler2003}
	M. R\"osler,
	\emph{A positive radial product formula for the Dunkl kernel\/},
	Trans. Amer.Math. Soc. 355 (2003), no. 6, 2413--2438{.}
	
\bibitem{Roesler3}
	M. R\"osler:
	\emph{Dunkl operators (theory and applications).
	In: Koelink, E., Van Assche, W. (eds.)
	Orthogonal polynomials and special functions} (Leuven, 2002), 93--135.
	Lect. Notes Math. 1817, Springer-Verlag (2003).
	
\bibitem{Roesler-Voit}
	M.~R\"osler, M. Voit,
	\emph{Dunkl theory, convolution algebras, and related Markov processes\/},
in \textit{Harmonic and stochastic analysis of Dunkl processes\/},
{P. Graczyk, M. R\"osler, M. Yor (eds.), 1--112, Travaux en cours 71,}
Hermann, Paris, 2008.

\bibitem{SoltaniJFA}
    F. Soltani,
    \emph{Littlewood-Paley operators associated with the Dunkl operator on $\mathbb{R}$},
     J. Funct. Anal. 221 (2005), no. 1,
     205--225.

\bibitem{SoltaniJIPAM}
    F. Soltani,
    \emph{Littlewood-Paley $g$-function in the Dunkl analysis on $\mathbb{R}^d$},
     JIPAM. J. Inequal. Pure Appl. Math. 6 (2005), no. 3, Article 84, 13 pp.

\bibitem{St1}
E.M. Stein,
\emph{Singular integral and differentiability properties of functions\/},
Princeton {Math.} Series 30, Princeton Univ. Press, 1970.

\bibitem{St2}
E.M. Stein,
\emph{Harmonic analysis\/}
{(\textit{real variable methods, orthogonality and oscillatory integrals\/})}, {Princeton Math. Series 43}, Princeton Univ. Press, 1993.
	
\bibitem{ThangaveluXu}
	S. Thangavelu, Y. Xu,
	\emph{Convolution operator and maximal function for the Dunkl transform},
	 J. Anal. Math. 97 (2005),
	 25--55.

\bibitem{Velicu1}
    A. Velicu,
    \emph{Sobolev-Type Inequalities for Dunkl Operators},
    J. Funct. Anal. 279 (2020), no. 7, 108695, 37 pp.	

\bibitem{Ch} Ch. Yacoub, personal communication.
\end{thebibliography}
\end{document}